\documentclass[11pt]{amsart}

\title{}
\author{}
\date{\today}

%packages
\usepackage{amsmath,amsfonts,amssymb,amsthm,graphicx,overpic,hyperref}
\usepackage{float,enumitem}
\usepackage{color,xcolor}
\usepackage[colorinlistoftodos]{todonotes}
\usepackage{multirow}
\usepackage{pgfplots}
\usepackage{subcaption}
\captionsetup[subfigure]{labelfont=rm}

\pgfplotsset{compat=1.7}

%lay out
\usepackage[margin=2.7cm]{geometry}  
\setlength{\parskip}{0.1cm plus0.05cm minus0.025cm}

\numberwithin{equation}{section}

%%%% macros %%%%

%theorem, definition etc.
\newtheorem{thm}{Theorem}[section]
\newtheorem{prop}[thm]{Proposition}
\newtheorem{prp}[thm]{Proposition}
\newtheorem{cor}[thm]{Corollary}
\newtheorem{lem}[thm]{Lemma}

\theoremstyle{definition}

\newtheorem{que}{Question}

%math
\newcommand{\nc}{\newcommand}
\nc{\dmo}{\DeclareMathOperator}
\usepackage{dsfont}

%%%%%%%%%%%%%%%
% general
%%%%%%%%%%%%%%%
%- absolute value
\nc{\abs}[1]{\left| #1 \right|}
% - big-O
\nc{\bigO}[1]{O\left(#1\right)}
% - cardinality
\nc{\card}[1]{\left|#1\right|}
% - ceiling
\nc{\ceil}[1]{\left\lceil #1 \right\rceil}
% - complex numbers
\nc{\CC}{\mathbb{C}}
% - dilogarithm
\nc{\dilog}{\mathcal{L}}
% - floor
\nc{\floor}[1]{\left\lfloor #1 \right\rfloor}
% - indicator function
\nc{\ind}{\mathds{1}}
% - integers
\nc{\ZZ}{\mathbb{Z}}
%- length
\nc{\len}[1]{\left| #1 \right|}
% - little-O
\nc{\littleo}[1]{o\left(#1\right)}
%  Mat
\dmo{\Mat}{Mat}
% - natural numbers
\nc{\NN}{\mathbb{N}}
%- norm
\nc{\norm}[1]{\left|\left| #1 \right|\right|}
% - rationals
\nc{\QQ}{\mathbb{Q}}
% - real numbers
\nc{\RR}{\mathbb{R}}
% - set
\nc{\st}[2]{\left\{\, #1 \,:\, #2\,\right\}}
%- support
\dmo{\supp}{supp}
% - trace
\nc{\tr}[1]{\mathrm{tr}\left(#1\right)}
% - Fourier transform
\nc{\what}{\widehat}
% - Imaginary part
\dmo{\im}{Im}
% - epsilon
\nc{\eps}{\varepsilon}
% - logarithmic integral
\dmo{\li}{li}
% - arccosh
\dmo{\arccosh}{arccosh}

%%%%%%%%%%%%%%%
% geometry
%%%%%%%%%%%%%%%
%  area
\dmo{\area}{area}
%  convex hull
\dmo{\conv}{conv}
% - diameter
\dmo{\diam}{diam}
% - disk
\dmo{\DD}{\mathbb{D}}
%- distance
\dmo{\dist}{\mathrm{d}}
% - hyperbolic plane
\nc{\HH}{\mathbb{H}}
%- isometries
\dmo{\Isom}{Isom}
% - mapping class group
\dmo{\MCG}{MCG}
% - minimal pants length
\dmo{\MPL}{MPL}
% - moduli space
\dmo{\Mod}{\mathcal{M}}
% - pants length
\dmo{\PL}{PL}
% - sphere
\nc{\Sphere}{\mathbb{S}}
% - systole
\dmo{\sys}{sys}
% - Kissing number
\dmo{\kiss}{Kiss}
% - teichmueller space
\dmo{\Teich}{\mathcal{T}}
% - torus
\nc{\Torus}{\mathbb{T}}
% - volume
\dmo{\vol}{vol}
% - Weil-Petersson
\dmo{\WP}{WP}

%%%%%%%%%%%%%%%
% probability
%%%%%%%%%%%%%%%
%- convergence total variationa\text{and} \quad \left(l distance
\dmo{\convTV}{\;\stackrel{\mathrm{TV}}{\longrightarrow}\;}
% - expected value
\nc{\ExV}[2]{\mathbb{E}_{#1}\left[#2\right]}
% - expected value
\dmo{\EE}{\mathbb{E}}
% - probability
\nc{\Pro}[2]{\mathbb{P}_{#1}\left[#2\right]}
% - probability
\dmo{\PP}{\mathbb{P}}
%- total variational distance
\nc{\distTV}[2]{\mathrm{d}_{\rm TV}\left(#1,#2\right)}
% - uniform probability
\dmo{\UU}{\mathbb{U}}
% - variance
\nc{\Var}[2]{\mathbb{V}\mathrm{ar}_{#1}\left[#2\right]}

%%%%%%%%%%%%%%%
% algebra
%%%%%%%%%%%%%%%
% - alternating group
\dmo{\alt}{\mathfrak{A}}
% - Aut
\dmo{\Aut}{Aut}
% - Fix
\dmo{\Fix}{Fix}
% - GL
\dmo{\GL}{GL}
% - Hom
\dmo{\Hom}{Hom}
% - Identity
\dmo{\id}{Id}
% - PSL
\dmo{\PGL}{PGL}
% - PSL
\dmo{\PSL}{PSL}
% - PO
\dmo{\PO}{PO}
% - Rep
\dmo{\Rep}{Rep}
% - SL
\dmo{\SL}{SL}
% - SO
\dmo{\SO}{SO}
% - symmetric group
\dmo{\sym}{\mathfrak{S}}
% - involutions
\dmo{\inv}{\mathcal{I}}
% - orbits
\dmo{\orb}{\mathcal{O}}
% - stabilizer
\dmo{\stab}{Stab}

%%%%%%%%%%%%%%%
% caligraphy
%%%%%%%%%%%%%%%
\dmo{\calA}{\mathcal{A}}
\dmo{\calB}{\mathcal{B}}
\dmo{\calC}{\mathcal{C}}
\dmo{\calD}{\mathcal{D}}
\dmo{\calE}{\mathcal{E}}
\dmo{\calF}{\mathcal{F}}
\dmo{\calG}{\mathcal{G}}
\dmo{\calH}{\mathcal{H}}
\dmo{\calI}{\mathcal{I}}
\dmo{\calJ}{\mathcal{J}}
\dmo{\calK}{\mathcal{K}}
\dmo{\calL}{\mathcal{L}}
\dmo{\calM}{\mathcal{M}}
\dmo{\calN}{\mathcal{N}}
\dmo{\calO}{\mathcal{O}}
\dmo{\calP}{\mathcal{P}}
\dmo{\calQ}{\mathcal{Q}}
\dmo{\calR}{\mathcal{R}}
\dmo{\calS}{\mathcal{S}}
\dmo{\calT}{\mathcal{T}}
\dmo{\calU}{\mathcal{U}}
\dmo{\calV}{\mathcal{V}}
\dmo{\calW}{\mathcal{W}}
\dmo{\calX}{\mathcal{X}}
\dmo{\calY}{\mathcal{Y}}
\dmo{\calZ}{\mathcal{Z}}

%%%%%%%%%%%%%%%
% names
%%%%%%%%%%%%%%%
\nc{\klav}{Klav\v{z}ar}

%%%%%%%%%%%%%%%
% quaternions
%%%%%%%%%%%%%%%
\nc{\bi}{\mathbf{i}}
\nc{\bj}{\mathbf{j}}
\nc{\bk}{\mathbf{k}}

\newcommand{\interval}[4]{
  \ifthenelse{ \equal{#1}{o} } {\mathopen{]}} {\mathopen{[}}
  #2, #3
  \ifthenelse{ \equal{#4}{o} } {\mathclose{[}} {\mathclose{]}}
}

\newcommand{\ovl}[1]{\overline #1}

\newcommand{\pl}{\partial}

\newcommand{\inj}{\operatorname{inj}}

\newcommand{\double}{\mathcal{D}}

%\renewcommand{\frq}{\mathfrak{Q}}

%%%%%%%%%%%%%%%%%%%%%%%%%%%%%%

\title{A model for random three--manifolds}

\author{Bram Petri and Jean Raimbault}
  \address{Institut de Math\'ematiques de Jussieu--Paris Rive Gauche ; UMR7586 \\ Sorbonne Universit\'e - Campus Pierre et Marie Curie \\
4, place Jussieu, 75252 Paris Cedex 05, France}
  \email{Bram.Petri@imj-prg.fr}
  \address{Institut de Math\'ematiques de Toulouse ; UMR5219 \\ Universit\'e de Toulouse ; CNRS \\ UPS IMT, F-31062 Toulouse Cedex 9, France}
  \email{Jean.Raimbault@math.univ-toulouse.fr}

\begin{document}

\begin{abstract}
We study compact three-manifolds with boundary obtained by randomly gluing together truncated tetrahedra along their faces. We prove that, asymptotically almost surely as the number of tetrahedra tends to infinity, these manifolds are connected and have a single boundary component. We prove a law of large numbers for the genus of this boundary component, we show that the Heegaard genus of these manifolds is linear in the number of tetrahedra and we bound their first Betti number.

We also show that, asymptotically almost surely as the number of tetrahedra tends to infinity, our manifolds admit a unique hyperbolic metric with totally geodesic boundary. We prove a law of large numbers for the volume of this metric, prove that the associated Laplacian has a uniform spectral gap and show that the diameter of our manifolds is logarithmic as a function of their volume. Finally, we determine the Benjamini--Schramm limit of our sequence of random manifolds.
\end{abstract}

\maketitle

\section{Introduction}

\subsection{Context}

Random constructions of compact manifolds can be seen as an analogue of the well-established theory of random graphs and serve similar purposes. First of all, they make the notion of a ``typical'' manifold rigorous. Secondly, they can be used as a testing ground for conjectures of which the proof is still out of reach. Finally, there is what is often called the \emph{probabilistic method} -- using probability theory to prove the existence of objects with extremal properties. In this paper we are mostly interested in the first aspect. 

Let us be more specific about what kind of objects we are intetested in. As is the case for graphs, there are countably many homeomorphism types of compact manifolds. Thus a random manifold consists not in one random variable but rather a family of random variables---say $M_n, n \ge 1$---where $n$ is some measure of ``complexity'', usually in relation with a particular construction that is used to define the random objects. For graphs this will often be the number of vertices. Random models for 3--manifolds that have been well-studied are random Heegaard splittings and random fibered manifolds; here the complexity depends on two integers: the genus $g$ of the handlebody or the fiber, and the number of steps $k$ used to generate a random mapping class \cite[Section 2.10]{DT}. A basic property should be that the union of the support of the $M_n$ is the whole set of the manifolds one is interested in studying. The models above satisfy this requirement if one takes both $k, g \to +\infty$ (though only virtually for the second one) but the asymptotic results pertaining to them (in particular hyperbolicity) are mostly studied in terms of the mapping class (that is, when $k \to +\infty$). If one is interested in studying typical 3--manifolds this does not seem satisfying. 

A more direct measure of the complexity of a 3--manifold is the minimal number of tetrahedra needed to triangulate it. %There are only finitely many triangulations with a given number of tetrahedra, so this gives immediately a model for a random manifold---just take uniformly at random a manifold among those triangulations with $n$ tetrahedra which are homeomorphic to one. There are two problems with this approach. The first is that even uniformly random triangulations are not very accessible to study. This is already apparent in the case of graphs where rather than taking a random elements in the set of graphs on $n$ vertices (possibly with added constraints such as regularity) one uses more ``dynamical'' models, and it is possible to use the same fix for higher-dimensional triangulations. 
A natural way to construct random 3--manifolds is thus to start with a model for a random triangulation on $n$ tetrahedra and condition it to be a manifold. However, studying such a model of a random manifold is hard because if one randomly glues the faces of $n$ tetrahedra together in pairs, the probability that the result is a manifold tends to $0$ as $n\to\infty$ (see for instance \cite[Proposition 2.8]{DT}). So we cannot rely on the study of a generic triangulation to establish a.a.s. properties of the manifolds and we have to instead study probabilities conditioned on a set of conditions that is hard to manage.

We will not adress this issue in this paper, but note that even counting the number of triangulations is a hard problem (the best known bounds we are aware of are due to Chapuy--Perarnau \cite{CP}). Instead we will consider compact manifolds with boundary associated with random triangulations. The only points in a 3-dimensional triangulation that might not admit neighbourhoods homeomorphic to open sets in $\RR^3$ are the vertices. As such, we obtain a random 3--manifold with boundary by randomly gluing together $n$ tetrahedra that are truncated near the vertices (see Figure \ref{pic_truncatedtet}). Moreover all compact 3--manifolds with non-empty boundary can be obtained in this way (see for example \cite[Corollary 1.3]{Costantino_Frigerio_Petronio}).

\begin{figure}[!h]
\begin{center}
\includegraphics[scale=1]{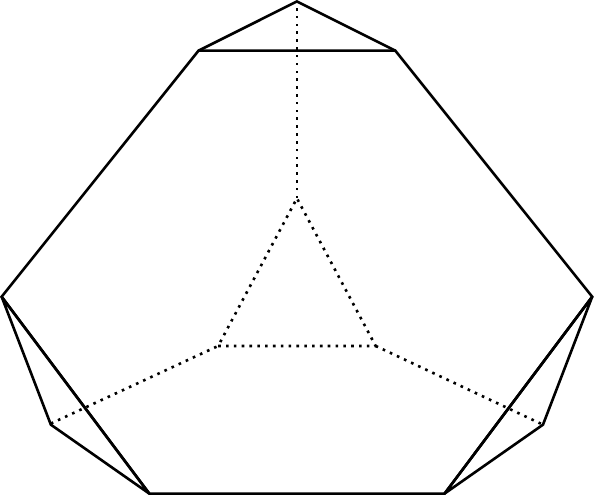}
\caption{A truncated tetrahedron. $M_n$ is built by randomly gluing $n$ copies of this polyhedron together along their hexagonal faces.}\label{pic_truncatedtet}
\end{center}
\end{figure}

We are interested in the asymptotic behaviour as $n \to +\infty$ of geometric and topological properties of $M_n$. We are particularly interested in finding properties whose probability of occurence is asymptotically 1 (for regular graphs this can for instance be connectivity or expansion, depending on the model). For 3--manifolds the most obvious candidate for such a property is hyperbolicity. We will prove that a.a.s. our manifolds are hyperbolic (with totally geodesic boundary) with volume proportional to the number of tetrahedra, their Heegaard genus goes to infinity, and get estimates on their Betti numbers. We also prove some finer results about their geometry: they are expanders and we show the converge to an explicit limit in a probabilistic version of the Gromov--Hausdorff topology. 

%%%%%%%%%%%%%%%%%%%%%%%%%%%%%%

\subsection{Results}
We will impose one further condition: we condition on two tetrahedra sharing at most one face and every face being incident to two distinct tetrahedra. This is strictly weaker than asking that the complex be simplicial\footnote{Even if it can be argued that this is not a very unnatural constraint, our main reason for setting this constraint is a technical one: we need it in the proof of Lemma \ref{small_cusps}}. The resulting random manifold will be called $M_n$. A detailed description of the model can be found in Section \ref{delta-complex}.

The first question now is what the topology of the resulting manifold is. We prove:
\begin{thm}[Topology]\label{thm_main_topology}
\begin{itemize}
\item[(a)] We have
\[ \lim_{n\to\infty} \PP[M_n \text{ is connected and has a single boundary component}] = 1\]
\item[(b)] The genus $g(\partial M_n)$ of the boundary of $M_n$ satisfies
\[ g(\partial M_n) \sim n \quad \text{as } n\to\infty\]
in probability.
\item[(c)] Let $\double M_n$ denotes the double of $M_n$ along its boundary and $g(\double M_n)$ its Heegard genus. Then
  \[
  \lim_{n\to+\infty} \mathbb P[n - \theta(n) \le g(\double M_n) \le n + \theta(n)] = 1,
  \]
  for any function $\theta:\NN\to\RR$ that grows super-logarithmically\footnote{By this we mean that $\lim_{n\to\infty}\frac{\theta(n)}{\log(n)}=+\infty$.}.
\item[(d)] There exists $C$ such that the Betti numbers $b_1(M_n)$ and $b_1(M_n, \partial M_n)$ satisfy
  \[
  \lim_{n\to+\infty} \mathbb P[b_1(M_n, \partial M_n) \le \theta(n)] = 1,\, \lim_{n\to+\infty} \mathbb P[\;|b_1(M_n) - n| \le \theta(n)] = 1
  \]  
  for any function $\theta:\NN\to\RR$ that grows super-logarithmically.
\end{itemize}
\end{thm}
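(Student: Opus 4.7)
\smallskip
\noindent\emph{Strategy.} The plan is to translate each topological
assertion into an estimate on a combinatorial random variable
associated with the gluing pattern, and then invoke standard tools
from the theory of random regular graphs and random permutations.
Let $V$ and $E$ denote the numbers of vertex- and edge-classes of the
$\Delta$-complex $\tilde M_n$ obtained from $M_n$ by capping each
boundary component with a cone. Both arise as connected-component
counts of random multigraphs: $V$ for a $3$-regular graph on the $4n$
tetrahedral corners (each face-gluing identifying three corner pairs),
and $E$ for a $2$-regular graph on the $6n$ tetrahedral edges (each
edge lying in exactly two faces, hence being identified with two
others via the two adjacent gluings). Classical cycle estimates for
the configuration model give $V,E=O(\log n)$ a.a.s., and both random
graphs are a.a.s.\ connected; the imposed conditions (at most one
shared face, no self-gluings) only locally perturb the model and do
not affect these asymptotics.

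\smallskip
\noindent\emph{Parts (a) and (b).} Connectedness of $M_n$ is
connectedness of the $4$-regular multigraph on the $n$ tetrahedra
recording face-gluings; that $\partial M_n$ is a single surface is
the statement $V=1$, i.e.\ connectedness of the vertex graph. Given
(a), one has $\chi(\tilde M_n)=1-E+n$, and substituting
$\chi(\tilde M_n)=\chi(M_n)+1-\chi(\partial M_n)$ together with
$\chi(\partial M_n)=2\chi(M_n)=2-2g(\partial M_n)$ yields
$g(\partial M_n)=n+1-E$. The estimate $E=O(\log n)$ therefore implies
(b), in fact in the stronger form $|g(\partial M_n)-n|=O(\log n)$.

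\smallskip
\noindent\emph{Part (d).} Every vertex of $M_n$ lies on a truncation
triangle, so $C_0(M_n,\partial M_n)=0$; the non-boundary $1$-cells of
$M_n$ are exactly the hex--hex edges, which are in bijection with the
edge-classes of $\tilde M_n$, so $C_1(M_n,\partial M_n)\cong\ZZ^E$.
Consequently $b_1(M_n,\partial M_n)\le E=O(\log n)$ a.a.s. Since
Poincar\'e--Lefschetz duality and $\chi(M_n)=1-g(\partial M_n)$ give
$b_1(M_n)-b_1(M_n,\partial M_n)=g(\partial M_n)$, the sharp form of
(b) produces $|b_1(M_n)-n|\le 1+2E=O(\log n)$ a.a.s., which is
dominated by any super-logarithmic $\theta$.

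\smallskip
\noindent\emph{Part (c).} For the lower bound,
$g(\double M_n)\ge b_1(\double M_n)$; a Mayer--Vietoris computation
combined with the half-lives--half-dies theorem (giving that
$H_1(\partial M_n;\QQ)\to H_1(M_n;\QQ)$ has rank $g(\partial M_n)$)
yields $b_1(\double M_n)=g(\partial M_n)+2b_2(M_n)\ge g(\partial M_n)\ge n-O(\log n)$.
The upper bound is the main obstacle and requires exhibiting an
explicit Heegaard splitting of $\double M_n$ of genus at most
$n+O(\log n)$. A natural approach is to use $\partial M_n$ itself as
a pseudo-Heegaard surface and stabilise it via a relative handle
decomposition of $(M_n,\partial M_n)$; the smallness of the relative
chain complex in low degrees shown in (d) suggests that only
$O(\log n)$ stabilisations are needed, after which the $n$
$2$-handles from the hexagonal face-gluings must be shown,
presumably via isotopy and cancellation arguments tailored to the
truncated-tetrahedral structure, to contribute only $O(\log n)$ to
the genus beyond $g(\partial M_n)$.
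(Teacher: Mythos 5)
Your framework for (a), (b), (d) and the lower bound in (c) tracks the paper quite closely: reduce (a) to $V=1$, estimate the number $E$ of interior edge classes, obtain $g(\partial M_n)=n+1-E$ from the Euler characteristic together with $\chi(\partial M_n)=2\chi(M_n)$, bound $b_1(M_n,\partial M_n)\le E$ from the cellular relative chain complex, and derive the rest from Poincar\'e--Lefschetz duality and the Mayer--Vietoris sequence for the double. That much matches the paper's route in spirit and the algebra is correct. There are, however, two genuine gaps.

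First, the combinatorial inputs are not established. The auxiliary $3$-regular ``corner graph'' and $2$-regular ``edge graph'' you introduce are \emph{not} distributed as configuration-model random regular multigraphs: a single face-gluing determines three corner pairings and three edge pairings simultaneously, so the underlying half-edge matchings are strongly correlated, and ``classical cycle estimates for the configuration model'' do not apply to them. (The dual $4$-valent graph $\calG_n$ on tetrahedra \emph{is} the configuration model, and its connectivity justifies connectedness of $M_n$; the corner and edge graphs are a different story.) The paper handles exactly this issue by a direct first-moment computation for $V=1$ (Proposition~\ref{prp_vertices}) and a bespoke peeling argument yielding $\EE[E]=\tfrac12\log n+O(1)$ (Theorem~\ref{thm_edges}(a)), then passes to a.a.s.\ statements via Markov. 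Incidentally, your claim that ``both random graphs are a.a.s.\ connected'' contradicts your own assertion $E=O(\log n)$: a $2$-regular graph with $\Theta(\log n)$ cycles is not connected. You only need connectedness of the corner graph, so this is a slip rather than a fatal error, but it is a sign the heuristic is being applied too quickly.

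Second, and more seriously, you do not prove the upper bound in (c). Your sketch---stabilise the boundary surface and hope the $n$ two-handles ``presumably'' contribute only $O(\log n)$ extra genus---is a plan, not an argument, and it is not clear how to carry it out without essentially rediscovering the construction the paper uses. The paper's argument (attributed to N.~Dunfield) is short and concrete: $M_n$ minus open regular neighbourhoods of its $E$ interior edges deformation retracts to the $4$-valent dual graph on $n$ vertices, hence is a handlebody of genus $n+1$; writing each edge neighbourhood as $D_e\times e$ and splitting $D_e$ into two half-discs, the reflection symmetry of $\double M_n$ yields two handlebodies $H_n^1,H_n^2$, each of genus $n+E+1$, with $\double M_n=H_n^1\cup H_n^2$, giving $g(\double M_n)\le n+1+E$ outright. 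Combined with the estimate on $\EE[E]$ this is exactly (c). You should look for such an explicit splitting rather than a stabilisation count.
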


This is a combination of Corollary \ref{cor_topology}, \ref{betti_comb} and \ref{double_genus}. Moreover, in Theorem \ref{thm_edges} below we prove various combinatorial properties of the interior edges in our random complex. In item (c) we look at the Heegaard genus of the double rather than the usual notion of Heegard genus of the manifold itself (defined in terms of decompositions with compression bodies, cf. \cite[2.2]{Scharelmann_handbook}) because the latter is bounded below by the genus of the boundary, so (c) says something that is not covered by (b). 

In low dimensions it turns out that typical objects are often hyperbolic and in that sense, our model is no different. Note that it follows from Mostow rigidity that if $M_n$ caries a hyperbolic metric with totally geodedic boundary, then this metric is unique up to isometry. As such, one can also ask for the geometric properties of this metric. We prove:

\begin{thm}[Geometry]~\label{thm_main_geometry}
We have
\[ \lim_{n\to +\infty} \PP[M_n \text{ carries a hyperbolic metric with totally geodesic boundary}] = 1.\]
This metric has the following properties:
\begin{itemize}
\item[(a)]  The hyperbolic volume $\vol(M_n)$ of $M_n$ satisfies:
\[ \vol(M_n) \sim n\cdot v_O \quad \text{as } n\to\infty\]
in probability. Here $v_O$ denotes the volume of the regular right angled ideal hyperbolic octahedron.
\item[(b)] There exists a constant $c_\lambda>0$ so that the first discrete Laplacian eigenvalue $\lambda_1(M_n)$ of $M_n$ satisfies
\[ \lim_{n\to +\infty} \PP[\lambda_1(M_n) > c_\lambda] = 1. \]
\item[(c)] There exists a constant $c_d>0$ such that the diameter $\diam(M_n)$ of $M_n$ satisfies:
\[
\lim_{n\to +\infty}\PP[\diam(M_n) < c_d \log(\vol(M_n))]  = 1
\]
\item[(d)] There exists a constant $c_s>0$ such that the systole\footnote{The {\em systole} of a compact manifold is the smallest length of a closed geodesic; we do not take it to include lengths of arcs with endpoints on the boundary ; see next item for this. }  $\sys(M_n)$ of $M_n$ satisfies:
\[
\lim_{n\to +\infty}\PP[\sys(M_n) > c_s ]  = 1
\]
\item[(e)] For every $\eps>0$,
\[
\lim_{n\to +\infty}\PP\left[\sys(\double M_n) <  \frac{1}{n^{1/4-\eps}} \right]  = 1.
\]
The same holds for the minimal length among arcs in $M_n$ that are homotopically non-trivial relative to $\partial M_n$.
\end{itemize}
\end{thm}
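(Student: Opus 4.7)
The plan is to produce, with probability tending to~$1$, a short arc $\alpha$ in $M_n$ whose endpoints lie on $\partial M_n$ and which is homotopically non-trivial rel $\partial M_n$; doubling such an arc across $\partial M_n$ then yields a closed geodesic in $\double M_n$ of length $2\ell(\alpha)$, so both statements of~(e) follow from the single construction of one such arc of length at most $n^{-1/4+\eps}$. The natural candidates are the interior edges of the $3$-complex $M_n$: each long edge of a truncated tetrahedron has both endpoints on $\partial M_n$, and in the idealised right-angled structure (in which every long edge has valence~$6$) is orthogonal to $\partial M_n$ at its endpoints. In the general case the straightening of such a long-edge arc in its homotopy class rel $\partial M_n$ gives an orthogeodesic arc whose length we wish to bound.

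I would split the argument into a combinatorial step and a geometric step. For the combinatorial step, I would show that with probability tending to $1$ the complex $M_n$ contains an interior edge class of valence at least $n^{1/2-\eps'}$ for any fixed $\eps'>0$. The random pairing of the $4n$ hexagonal faces induces a $2$-regular random multigraph on the $6n$ long edges of the tetrahedra, whose cycle lengths are exactly the valences of the interior edge classes. A first-/second-moment analysis of these cycle lengths, analogous to the classical cycle statistics of uniformly random pairings and permutations (where the expected number of cycles of length $\ge k$ is of order $\log(n/k)$), shows that large valences appear with high probability; the conditioning in the definition of $M_n$ produces only negligible corrections. For the geometric step, I would argue that an interior edge of valence $d$ has a short orthogeodesic representative: around such an edge the $d$ tetrahedral dihedral angles sum to $2\pi$ and are hence typically of size $2\pi/d$, and in combination with the constant volume-per-tetrahedron asymptotic of Theorem~\ref{thm_main_geometry}(a), this should force the corresponding arc to have length decaying as a positive power of $1/d$, with an exponent that together with the combinatorial bound yields the desired $n^{-1/4+\eps}$ estimate. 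The non-triviality of the resulting arc rel $\partial M_n$ would then be checked by a separate local argument ruling out the atypical configurations in which such an arc bounds a disk with a piece of $\partial M_n$.

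The main obstacle is the geometric step: the hyperbolic metric on $M_n$ is not given explicitly as a gluing of polyhedra with prescribed dihedral angles, but rather via the general existence theorem for hyperbolic metrics on $3$-manifolds with totally geodesic boundary, so the hyperbolic length of a specific long edge is not an easy function of the combinatorics. To tackle this I would construct an explicit model polyhedral metric adapted to the combinatorial data of the high-valence edge (deforming the right-angled truncated-tetrahedron structure so as to accommodate the prescribed small dihedrals at the distinguished edge), verify the length estimate in the model, and transfer it to the actual metric on $M_n$ by a deformation argument combining Schl\"afli's volume variation formula, Mostow rigidity, and the asymptotic volume equality $\vol(M_n)\sim n v_O$ from Theorem~\ref{thm_main_geometry}(a).
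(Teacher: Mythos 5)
Your proposal addresses only part~(e); it does not engage with the hyperbolisation statement or parts~(a)--(d), several of which you in fact invoke as inputs (notably~(a)). For~(e) itself, your identification of the interior edge arcs as the candidates for short orthogeodesics is consistent with the picture in the paper: in the paper's Dehn-filling framework, $M_n$ is obtained from the cusped octahedral manifold $Y_n$ by annular Dehn filling, and the short geodesics produced are exactly the core arcs of those fillings, which are the straightened interior edges. But from there you and the paper diverge sharply, and the divergence is where the gap lies.

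The paper's route passes the whole quantitative burden to the Futer--Purcell--Schleimer effective Dehn filling theorems. Concretely, the cusps of total normalised length $L$ that survive the preliminary Andreev filling satisfy $L^2\ge n^{1/4-o(1)}$ (this is precisely where the exponent $1/4$ in (e) comes from: small cusps of combinatorial length $\le n^{1/4}$ are filled first by an explicit Andreev-theorem construction, so the shortest surviving cusp has normalised length on the order of $n^{1/8}$, and the collective bound is $n^{1/4-o(1)}$). FPS then yields both the existence of the hyperbolic filling and a core-geodesic length estimate of order $L^{-2}$, which directly gives $n^{-1/4+\eps}$. Your plan replaces this machinery with an explicit polyhedral model near a distinguished high-valence edge, Schl\"afli's variation formula and Mostow rigidity, together with the volume asymptotic $\vol(M_n)\sim n v_O$. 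This does not plausibly close the argument: the dihedral angles at an interior edge of valence $d$ are not forced to be $d$ equal wedges of $2\pi/d$ in the genuine hyperbolic metric, so there is no local normal form whose length you can simply read off; Schl\"afli's formula $dV=-\tfrac12\sum_e\ell_e\,d\theta_e$ couples the length of your distinguished edge to the dihedral variations of \emph{all} edges along a deformation, so extracting a bound on one summand requires global control you do not have; Mostow rigidity says nothing quantitative about a single arc; and $\vol(M_n)\sim n v_O$ is a global statement that does not localise the volume deficit near the edge in question. In effect you would be re-proving a weaker, local version of the FPS drilling-and-filling estimates, which is exactly the hard analytic content the paper outsources. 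A secondary issue is that targeting a single edge of valence $\sim n^{1/2-\eps}$ is not the right combinatorial quantity: the estimate in the paper is governed by the shortest surviving cusp after the Andreev step (length $\sim n^{1/4}$), not by the longest edge present, and what FPS consumes is the aggregate normalised cusp length, so isolating one long edge is neither necessary nor sufficient.
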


Some remarks about these results :
\begin{itemize}
\item Our proof of hyperbolisation for $M_n$ does not rely on Perelman's proof of the Geometrisation conjecture. Instead, we use Andreev's theorem \cite{Roeder_Hubbard} and recent work by Futer--Purcell--Schleimer \cite{Futer_Purcell_Schleimer} on Dehn fillings. Note that there is also a ``Ricci-flow-free'' proof of hyperbolisation of random Heegaard splittings \cite{FellerSistoViaggi}.

\item (b) admits a more geometric reformulation, as it follows from it together with classical work by Buser \cite{Buser} that the Cheeger constant of $M_n$ is also (asymptotically almost surely) uniformly bounded from below. It also implies (with (a) and a theorem of Lackenby \cite{Lackenby_Heegaard}) a weaker version of (c) in our topological theorem. 
  
\item (c) also implies that $\double M_n$ has logarithmic diameter. It also follows from an easy volume argument that the diameter of a closed hyperbolic $3$-manifold $M$ satisfies
\[
\diam(M) \geq \frac{1}{2} \log(\vol(M)) - C
\]
for some uniform constant $C>0$.

\item By arguments similar to those we use for (d) and (e) we could give probabilistic upper bounds for $\sys(M_n)$, and a lower bound for $\sys(\mathcal DM_n)$. The former are a bit awkward to state, and the latter would not be sharp. See also Question \ref{quest_sys} below. 
\end{itemize}

Besides expansion, another way of looking efficiently at the global geometry of a (possibly random) compact Riemannian manifold that has recently seen much interest is the so-called Benjamini--Schramm topology (see \cite{Gelander_lecture} for a survey). We determine the Benjamini--Schramm limit of the sequence $(M_n)_n$ as a consequence of our proof of hyperbolisation. This is more technical than our other results, so we will not give precise statements here but just a sketch of what this means. Very roughly, a sequence of finite volume random hyperbolic manifolds $(M_n)_n$ converges in the Benjamini--Schramm sense to a limit $M_\infty$ (a random pointed manifold) if for every fixed $R>0$, the $R$-neighbourhood of a uniformly random point in $M_n$ converges (in pointed Gromov--Hausdorff topology) to the $R$-neighbourhood of a random point in $M_\infty$. It turns out that the Benjamini--Schramm limit of $M_n$ can be identified with a tree of right angled octahedra pointed at a uniform random point (which makes sense since this manifold has a cofinite group of isometries). A rigorous exposition of these notions, and a precise statement for the result discussed above, are given in Section \ref{sec_BSconv}.

\subsection{Notes and references}

Various models for random manifolds are known in dimensions two (eg. \cite{BM,GPY,Mir,BCP,MageeNaudPuder}) and three (eg. \cite{DT}) and all three types of questions mentioned above have been explored: the models in \cite{BM} and \cite{Mir} are plausible as models of typical (hyperbolic) surfaces, the original motivation for the introduction of random Heegaard splittings by Dunfield and Thurston was to study the (at that point still unsolved) virtual Haken conjecture and \cite{GPY}, \cite{BCP} and \cite{LMW} are applications of the probabilistic method to produce hyperbolic surfaces without short pants decompositions, hyperbolic surfaces with near-minimal diameter and infinitely many closed hyperbolic homology three-spheres with a fixed Casson invariant and Heegaard genus respectively.

The most studied models for random three-manifolds are those of random Heegaard splittings and random mapping tori. Both of these are hyperbolic with probability $1$ \cite{Maher1,Maher2}. Moreover, like our manifolds (Theorem \ref{thm_main_geometry}(b)) they satisfy a law of large numbers for volume \cite{Viaggi}, with a constant depending on the underlying random walk. Their spectral gap behaves differently: it is inversely quadratic in volume \cite{HamenstaedtViaggi}. Their injectivity radius has been studied in \cite{SistoTaylor} and torsion in their homology in \cite{BBGHHKPV}. Moreover, even if random Heegaard splittings turn out to be hyperbolic with probability $1$ \cite{Maher1}, unlike for instance random regular graphs \cite{Bol1} and random hyperbolic surfaces \cite{BM,Mir,Raimbault}, they do not Benjamini--Schramm converge to their universal cover -- i.e. already at a bounded scale, the geometry of these manifolds ceases to be that of $\mathbb{H}^3$. 

Whether the model studied in this paper is plausible as a model for a ``typical'' three-manifold with boundary, we leave to the reader. We note that the question of studying this model has been evoked before (see for instance \cite[Question 6.2]{DelpHoffossManning}) but we aren't aware of any prior other results on it.

Finally, it is possible to derive models for random closed manifolds from our random manifolds with boundary. The simplest would be obtained by just doubling it; however this is supported only on manifolds admitting an involution with codimension-1 fixed locus. There are various ways to break this symmetry: the fact that the boundary is triangulated allows us to identify it to a fixed (depending only on the genus) model surface ``up to a finite ambiguity''. So we can talk about random mapping classes of the boundary almost as usual, and these allow us to perform various more complicated constructions such as gluing back a copy using a random mapping class. We can also glue the appropriate handlebody (or indeed any other manifold with connected boundary of the correct genus). By Geometrisation all these models are hyperbolic; however we do not know how their volume behaves, whether they are expanders (say for a choice of the mapping class with ``few steps'') or not, or whether they admit a Benjamini--Schramm limit. The investigation of such questions would certainly require a different set of tools than what we use here. 

\subsection{Proof ideas}
The order in which we prove our results is very different from the order in which we presented them above. The two big steps consist of understanding the combinatorial properties of the complex we build and then using those to understand the geometry and topology. 

The first observation is that all results above are of the form $\PP_n[P_n]\to 1$ as $n\to\infty$ for some sequence of properties $P_n$ of $M_n$. It follows from classical results in graph theory \cite{Bol1,Wor} that for such statements it is sufficient to prove the analogous statement for the random manifolds $N_n$ obtained by randomly gluing the building blocks together (without setting the condition on faces we set for $M_n$). 

In what follows, we will try to avoid repeating the phrase ``asymptotically almost surely'' and will often just say that $M_n$ has this or that property when we mean it has the given property asymptotically almost surely.

The proofs now start with the combinatorics (in Section \ref{sec_comb}). Using the observation above, the idea is to study the properties of $N_n$ and then turn these into properties of $M_n$. First of all, we prove, using elementary but tedious combinatorics, that the number of boundary components of $N_n$ (and hence $M_n$) is $1$. The next step, which is responsible for the largest part of the combinatorial arguments, is to study the combinatorics of interior edges in $N_n$. We ask two questions: how many edges are there? And, given some number $k\in\NN$, how many edges are there that are incident to $k$ truncated tetrahedra? To answer these questions, we will use \emph{peeling} techniques. These are techniques coming from the world of random planar maps (see for instance \cite{Curien}). The basic idea is to explore the random cell complex $N_n$ using a specific algorithm -- adapted to the problem at hand -- to determine in which order cells are explored. These lead to bounds on the expected number of interior edges (the total and the number that is incident to a fixed number of $3$-cells) that we think might be interesting in their own right (see Theorem \ref{thm_edges}). This in turn yields the Euler characteristic and hence the genus of the boundary of $N_n$ (and hence $M_n$). Note that all these combinatorial resuls can also be interpreted in terms of the cell complex obtained from gluing tetrahedra according to the same pattern -- a pseudo-manifold. 

After this, we deal with the geometric questions in Section \ref{sec_geom}. Some of our topological results also follow from these. Our first goal is hyperbolisation. The main idea behind our proof of this is to see our manifold as a Dehn filling of a non-compact hyperbolic three-manifold (similar ideas were used, with very different objectives, in \cite{Costantino_Frigerio_Petronio}). This non-compact manifold is obtained by gluing hyperbolic right-angled octahedra, using four alternating faces out of eight per octahedron, along the same pattern as $M_n$. We then first apply Andreev's theorem to fill cusps with ``few'' octahedra around them. The number such cusps is controlled by our combinatorial bounds. This creates another non-compact hyperbolic manifold, but without ``small'' cusps. We then fill the remaining cusps and rely on results by Futer--Purcell--Schleimer \cite{Futer_Purcell_Schleimer} to guarantee the result is hyperbolic. These same results also give us information about the way the geometry changes between the non-compact and the compact manifold. 

Once we have proved that $M_n$ is hyperbolic, we use results on random regular graphs together with a version of the Brooks--Burger transfer principle to show that $\lambda_1(M_n)$ can be uniformly bounded from below. Together with the law of larger number for volumes, which follows essentially directly from the geometric control we have over our hyperbolisations, and results by Lackenby we obtain the fact that the Heegaard genus of $\double M_n$ grows linearly in $n$. We prove the logarithmic bound on the diameter of $M_n$ by combining the fact that random $4$-regular graphs have logarithmic diameter with the geometric control we have over the change of geometry during Dehn filling.

Finally, we prove Benjamini--Schramm convergence, again using the geometric comparisson between the non-compact hyperbolic manifold and $M_n$. 

\subsection{Questions}

We finish this introduction with some questions.

\begin{que}[Poisson-Dirichlet distribution for edges]
Let us write 
\[ L= (L_1,L_2,\ldots ) \]
for the random vector that contains the \emph{lengths} of all the interior edges in $M_n$. Here the length of an edge is the number of $3$-cells incident to it and is counted with multiplicity -- i.e. if an interior edges is incident to a $3$-cell in multiple places then the $3$-cell is counted multiple times. 

If we order this vector so that $L_1\geq L_2 \geq \ldots$ and normalise it by dividing by the total length ($6n$), does the resulting partition of the interval $[0,1]$ converge in distribution to a Poisson-Dirichlet distributed variable? 

The analogous result is known to hold for surfaces obtained by randomly gluing polygons together \cite{Gamburd,ChmutovPittel,BCP2}.
%
%If we write $L(K)$ for the vector that contains only the lengths of edges that are below $K$. Then, it follows from our Theorem \ref{thm_edges} \footnote{Actually, a slight strengthening of Theorem \ref{thm_edges}(b) is needed: expressions of the form $\EE[E_kE_l]$ need to be estimated, which can be done with the same method.} combined with \cite[Lemma 2]{AKM} that for any unbounded function $K:\NN\to\NN$ with $K(n) \stackrel{n\to\infty}{=} o(\sqrt{n})$, the random normalized vector
%\[ \frac{1}{\EE[\sum_{L_i \leq K(n)} L_i]} \cdot L(K(n))  \]
%converges in distribution to a \emph{Poisson-Dirichlet} distributed random variable. This is a random variable with values in the set
%\[\Delta_\infty = \st{ (x_k)_{k\in\NN} }{ x_k \geq x_{k+1} \geq 0 \text{ and } \sum_{k=0}^\infty x_k =1 }\]
%of ordered partitions that can be described with the following random stick-breaking process: one starts with the interval $[0,1]$ and breaks it in two at a point chosen uniformly at random with the Lebesgue measure. One of the two pieces is left alone and the other piece is broken again at a uniformly random point. This process is continued ad infinitum, after which the lengths of the resulting intervals are ordered by size. The resulting distribution on $\Delta_\infty$ is a Poisson-Dirichlet distribution.

%So, a natural question would be to prove this for \emph{all} edges and not just for edges adjacent to fewer than $\sqrt{n}$ $3$-cells. The analogous result is known to hold for surfaces obtained by randomly gluing polygons together \cite{Gamburd,ChmutovPittel,BCP2}.
\end{que}

\begin{que}[Explicit measures of expansion]
%Another interesting question is to understand the diameter of $M_n$. A simple argument based on volume shows that the diameter $\diam(M)$ of a closed hyperbolic $n$-manifold $M$ satisfies
%\[ \diam(M) \geq \frac{1}{n-1}\log(\vol(M)) + O(1)\]
%where the error depends on dimension only. In dimension two, this bound is known to be asymptotically sharp \cite{BCP} (i.e. the minimal diameter of a hyperbolic surface of area $\geq a$ is asymptotic to $\log(a)$ as $a\to\infty$). In higher dimensions, it is known that the minimal diameter of a hyperbolic $n$-manifold of volume at most $v$ grows logarithmically as a function of $v$ (this can for instance be proved using principal congruence covers of a fixed closed arithmetic hyperbolic $n$-manifold), but it is not known whether the bound above is asymptotically sharp.
%
%The first question would be whether the diameter of $M_n$ is logarithmic as a function of its volume. The diameter of a closed hyperbolic manifold (like $\double M_n$) can be controlled using its Cheeger constant and its injectivity radius (see for instance \cite{Brooks_NumberTheory}). So one way to control the diameter of $\double M_n$ would be to control its injectivity radius, which would be interesting in its own right. This however can't yield a sharp multiplicative constant.

 % Are there explicit spectral gaps and Cheeger constants that hold a.a.s. for $M_n$? For instance, do we have
 Determine the optimal spectral gaps and Cheeger constants that hold a.a.s. for $M_n$. For instance, do we have
  \[
  \forall \eps > 0 \, \lim_{n \to +\infty} \mathbb P(\lambda_1(M_n) > 1 - \eps) = 1?
  \]
  An analogue of this is conjectured for random hyperbolic surfaces \cite[Problem 10.3]{Wright}, \cite[Conjecture 1.1]{MageeNaudPuder} and holds for random regular graphs \cite{Friedman}.
\end{que}

Finally we can also ask for sharp estimates for the systoles of $M_n$ and $\double M_n$.

\begin{que} \label{quest_sys}
  Give an explicit sequence $(s_n)$ such that $\sys(\mathcal DM_n) \sim s_n$ in probability. Compute (if it exists) $\lim \mathbb E(\sys(M_n))$. 
\end{que}

%%%%%%%%%%%%%%%

\subsubsection{Finer behaviour of homology and $L^2$-invariants of the limit}

In Theorem \ref{thm_main_topology}(d) we get good bounds for the typical Betti numbers of the $M_n$. However, in particular in view of the fact that the random Heegaard splittings of \cite{DT} typically have vanishing first Betti numbers, we ask the following question.

\begin{que} \label{betti_zero}
  Does $b_1(M_n, \partial M_n) = 0$ hold a.a.s.?
\end{que}

A positive answer is suggested by computer experiments conducted by Nathan Dunfield using Reigina. His results also suggest the following conjecture about the behaviour of the full integral homology, denoting by $H_1(M_n)_{\mathrm{tors}}$ the torsion subgroup  of $H_1(M_n)$: 

\begin{que} \label{tors_zero}
  Is $H_1(M_n)_{\mathrm{tors}}$ trivial a.a.s.? Or a weaker variant : do we have $\lim\limits_{n \to +\infty} \log \frac{|H_1(M_n)_{\mathrm{tors}}|}{n} = 0$ in probability? 
\end{que}

Note that for random Heegaard splittings the opposite behaviour occurs : $H_1(M_n)_{\mathrm{tors}}$ is it as large as possible, i.e. of exponential size in the number of thetrahedra (see \cite[Section 2.2]{Kowalski_bour}). 

In view of the convergence discussed in Section \ref{sec_BSconv} the last question could be related to the $L^2$-invariants of the infinite cover $O^\infty \to O$ (see \cite{Kammeyer} for an introduction to this topic). Our result on Betti numbers implies, via generalisations of the L\"uck Approximation Theorem (see \cite[5.4.3]{Kammeyer}) that the $L^2$-Betti numbers of $O^\infty \to O$ relative to the boundary vanish. We can ask about other $L^2$-invariants:

\begin{que}
  What are the Novikov--Shubin invariants of $O^\infty \to O$? Is its $L^2$-torsion equal to 0? 
\end{que}

In view of the approximation conjecture for torsion (which is wide open at present, see \cite[6.5]{Kammeyer} for a survey, but much simpler to deal with in 3-dimensions when the torsion vanishes, see \cite{Le_tors}), the vanishing of $L^2$-torsion would likely imply an affirmative answer to the weaker form of Question \ref{tors_zero}. We note that our expansion results implies (via the proof of L\"uck approximation) that the zeroth Novikov-Shubin invariant is $\infty^+$. 

\subsection*{Acknowledgements}
We worked on and off on this project for several years and as such it benefited from various grants. BP thanks the Max Planck Institute for Mathematics in Bonn and the ERC grant ``Moduli''. JR thanks the ANR for support through the projet ANR-16-CE40-0022-01 - AGIRA and the Hausdorff institute through the junior trimester ``Topologie'' which was held there in 2016. 

We also thank Fran\c{c}ois Costantino, Juan Souto and Gabriele Viaggi for useful discussions. BP thanks Thomas Budzinski and Nicolas Curien for teaching him how to peel a surface. We are indebted to Nathan Dunfield for various comments on a preliminary version, suggesting a more elementary and efficient approach to Heegaard genus and sharing the results of his simulation of the model with us.

%%%%%%%%%%%%%%%%%%%%%%%%%%%%%%%%%%%%%%%%%%%%%%%%%%%%%%%%%%%%

\section{Combinatorics}\label{sec_comb}

In this section we formally describe the combinatorial model we use. Moreover, we determine the combinatorial structure of the random cell complex underlying our manifolds and derive some basic topological properties of our manifolds from it.

\subsection{The topological model} \label{delta-complex}

In what follows, $\calT_n$ will denote a $\Delta$-complex\footnote{A very mild generalization of a simplicial complex in which two $k$-faces are allowed to share more than one $(k-1)$-face and a $(k-1)$-face is allowed to be incident to a $k$ face on multiple sides.} obtained by randomly gluing the faces of $n$ tetrahedra together in pairs. The gluing that is used is picked at random among the three orientation reversing simplicial maps between the faces.

More formally, this goes as follows:
\begin{enumerate}
\item We start with $n$ \emph{labeled} tetrahedra. Here labeled means that every the vertex of these tetrahedra carries a unique label in $\{1,\ldots,4n\}$. If a face of a tetrahedron has vertices $v_1, v_2, v_3 \in \{1,\ldots,4n\}$, that moreover in this particular cyclic order induce an outward orientation on that particular face, then we will denote the face by the cycle\footnote{These cycles naturally lie in the symmetric group $\sym_{4n}$. However, because we won't use this in anything that follows, we will just think of these cycles as homeless.} $(v_1\;v_2\;v_3)$.
\item The faces are partitioned into $2n$ pairs, uniformly at random. We will denote the resulting partion by $\omega_n=(\omega_n^{(i)})_{i=1}^{2n}$.
\item Per pair of faces $\omega_n^{(i)}=\{(v_1\;v_2\;v_3),\;(w_1\;w_2\;w_3)\}$ in this partition, one of three cyclic-order-reversing pairings between the vertices is chosen uniformly at random. The resulting $2n$-tuple of pairings will be denoted $\sigma_n=(\sigma_n^{(i)})_{i=1}^{2n}$.
\item We identify each pair of faces $\omega_n^{(i)}=\{(v_1\;v_2\;v_3),\;(w_1\;w_2\;w_3)\}$ in the partition $\omega_n$ using the unique orientation reversing simplicial map that sends $v_j$ to $\sigma_n^{(i)}(v_j)$ for $j=1,2,3$. The resulting simplicial complex is called $\calT_n$.
\end{enumerate}

Let us write $(\Omega_n, \PP_n)$ for the corresponding probability space. So $\Omega_n$ is the finite set of all possibilities for $\omega_n$ and $\sigma_n$ and $\PP_n$ is the uniform probability measure on it. Note that
\[ \card{\Omega_n} = (4n)!! \; 3^{2n},\]
where for an even number $k\in \NN$, $k!!=(k-1)\cdot (k-3)\cdots 3 \cdot 1$.

The dual graph $\calG_n$ to $\calT_n$ - the $4$-valent graph whose vertices correspond to the tetrahedra of $\calT_n$ who share an edge per face that they have in common - is a random $4$-regular graph. The model this induces is exactly the \emph{configuration model}, one of the most studied models of random regular graphs (see eg. \cite{Bol1,Wor}).

Note that besides the number of tetrahedra ($n$), the number of $2$-faces is also deterministic in this model ($2n$). The numbers of vertices and edges are random variables.

\subsection{The results}

Let $N_n$ denote the manifold with boundary obtained by truncating $\calT_n$ at the vertices. Figure \ref{pic_truncatedtet} in the introduction shows what the basic building block of $N_n$ looks like.

In most of this section we will think in terms of $\calT_n$. However, since we are eventually interested in $N_n$, we will describe some of the results in terms of $N_n$.

\begin{thm}[Topology]\label{thm_topology}
\begin{itemize}
\item[(a)] We have
\[ \lim_{n\to\infty} \PP[N_n \text{ has a single boundary component}] = 1\]
\item[(b)] We have 
\[\EE[\chi(\partial N_n)]  = \log(n) - 2n +\bigO{1} \]
as $n\to\infty$. In particular, if we write $g(\partial N_n)$ for the genus of the single boundary component of $\calM_n$ we have that
\[ g(\partial N_n) \sim n \quad \text{as } n\to\infty\]
in probability.
\end{itemize}
\end{thm}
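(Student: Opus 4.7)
The plan is to translate both parts into combinatorial statements about the random $\Delta$-complex $\calT_n$ and attack them via exploration arguments. First, I would observe that (a) is equivalent to $V(\calT_n) = 1$ with high probability, since the boundary components of $N_n$ are in bijection with the vertices of $\calT_n$ (each contributes one component, namely its link after truncation). For (b), a direct face count on $\partial N_n$ gives $\chi(\partial N_n) = V(\partial N_n) - 2n$ (there are $4n$ triangular faces and $6n$ edges after gluing), and the vertices of $\partial N_n$ are in bijection with the half-edges of $\calT_n$: such a vertex is an equivalence class of (tet-vertex, incident tet-edge) pairs under the face-gluing identifications, and these classes assemble precisely into (vertex of $\calT_n$, incident edge of $\calT_n$) pairs. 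Hence $V(\partial N_n) = 2E(\calT_n)$, and (b) reduces to proving $\EE[E(\calT_n)] = \frac{1}{2}\log n + O(1)$.

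To estimate $\EE[E(\calT_n)]$ I would realize it as the number of cycles of the random 2-regular multigraph $G$ on the $6n$ tet-edges, whose $6n$ identifications come from the $2n$ face gluings (each prescribing three simultaneous edge-pairings via the chosen orientation-reversing simplicial map). I would then run a peeling exploration, tracing cycles of $G$ one at a time and revealing only the minimum face-gluing data needed at each step. A computation analogous to cycle-counting for a uniformly random pairing shows that, conditionally on the exploration so far, the probability that the current cycle closes at step $k$ is $\frac{1}{2(6n - m(k))}(1 + O(1/n))$, where $m(k)$ counts the previously revealed identifications. Summing and using harmonic-series asymptotics yields $\EE[E(\calT_n)] = \frac{1}{2}\log(6n) + O(1)$, whence $\EE[\chi(\partial N_n)] = \log n - 2n + O(1)$.

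For (a), I would argue analogously on the 3-regular multigraph $H$ on the $4n$ tet-vertices, whose edges again come from the face gluings. Rather than computing $V(\calT_n)$ exactly, I would bound $\PP[V(\calT_n) \ge 2]$ by a first-moment estimate: the expected number of non-trivial subsets $S$ of tet-vertices closed under all face-gluing identifications is $o(1)$, because bounded-size $S$ are ruled out by direct counting (the probability that every face gluing incident to $S$ stays within $S$ is polynomially small in $n$) and $S$ of size linear in $n$ by standard isoperimetric/expansion arguments adapted from random regular graphs. Combining (a) and (b) the genus statement follows: $g(\partial N_n) = 1 - \chi(\partial N_n)/2$, and since $\EE[E(\calT_n)] = O(\log n)$ implies $E(\calT_n) = o(n)$ in probability by Markov, we conclude $g(\partial N_n)/n \to 1$ in probability.

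I expect the main obstacle to be the peeling computation of $\EE[E(\calT_n)]$: because a single face pairing reveals three edge-identifications at once, the pairings are coupled and the step probabilities are inhomogeneous, depending on how many previously revealed identifications already lie in partially-explored face pairings. Extracting the correct leading constant $\frac{1}{2}$ and controlling the $O(1)$ error requires organizing the peeling at the face-pairing level and carefully handling the (rare) events where the exploration re-enters a face whose partner has already been revealed.
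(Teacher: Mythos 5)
Your reduction of part (b) is exactly the paper's: $\chi(\partial N_n)=2E-2n$ where $E$ is the number of interior edges of $\calT_n$, so the whole estimate rides on $\EE[E]=\tfrac12\log n+\bigO{1}$. The peeling you describe is the same argument dressed differently---you trace cycles of the $2$-regular multigraph on tet-edges, while the paper glues faces one at a time and counts ``closed-off'' boundary edges. Both must deal with the same coupling: one face gluing simultaneously decides three edge-identifications, so the step in which the current cycle/edge closes is not independent of the past. The paper's device for this is the regular/singular dichotomy on boundary faces (\lemref{lem_singfaces}), which controls how often the exploration revisits a face whose neighbours have collided; that is precisely the ``rare events where the exploration re-enters a face whose partner has already been revealed'' you flag as the obstacle. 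So (b) is the paper's proof in a conjugate bookkeeping; follow through the singular-face bound and you are done.

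For part (a), however, your sketch has a real gap. You propose bounding $\PP[V(\calT_n)\geq 2]$ by a first-moment estimate over proper nonempty subsets $S$ of tet-vertices that are ``closed'' under the face gluings, treating bounded-size $S$ by direct counting and linear-size $S$ by expansion. That dichotomy leaves the intermediate range of sizes (say $\log n\leq |S|\leq n/\log n$) completely untreated, where neither an $\bigO{1}$-size union bound nor large-deviation expansion estimates apply as stated. Moreover, the ``expansion of random regular graphs'' argument is not immediately available: the $3$-regular multigraph on tet-vertices is not the configuration model---its edge set comes in triples, one triple per face gluing, with a fixed correlation inside each triple---so any Cheeger-type estimate would have to be reproved in this coupled model, which is not a minor adaptation. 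The paper sidesteps all of this by summing over ``labelled vertices'' (the full local gluing data at a vertex) and showing that the expected number of labelled vertices of incidence multiplicity at most $2n$ vanishes as $n\to\infty$ (\propref{prp_vertices}); this single estimate covers every intermediate scale and avoids the union-bound/expansion split entirely, because if $V\geq 2$ then by pigeonhole some vertex has incidence multiplicity at most $2n$. If you parameterise your subsets $S$ by the multiplicity vector $(n_1,n_2,n_3,n_4)$ as the paper does and compute the associated gluing probability directly, your first-moment strategy becomes the paper's proof; as written, it falls short.
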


Part (a) follows from Proposition \ref{prp_vertices} and part (b) from Theorem \ref{thm_edges}. In the latter, we also prove bounds on the expected number of edges incident to a given number of tetrahedra in $\calT_n$. We will need these in the geometric part of the paper.

We will write $M_n$ for the random manifold we obtain if we condition on $\calG_n$ being simple, i.e. not having loops or multiple edges. Bollob\'as \cite{Bol2} proved that
\[\lim_{n\to\infty} \PP[\calG_n \text{ is simple }] > 0 .\]
In particular, this implies that if $(\calP_n)_n$ is a sequence of properties of $M_n$ and $N_n$ then as $n\to\infty$,
\begin{equation}\label{eq_asymptabscont}
 \text{if} \quad \PP[N_n \text{ has }\calP_n] \to 1 \quad \text{then} \quad  \PP[M_n \text{ has }\calP_n] \to 1.
\end{equation}

Combining this with the theorem above, we get:

\begin{cor} [Topology of the boundary] \label{cor_topology}
\begin{itemize}
\item[(a)] We have
\[ \lim_{n\to\infty} \PP[M_n \text{ has a single boundary component}] = 1\]
\item[(b)] We have 
\[ g(\partial M_n) \sim n \quad \text{as } n\to\infty\]
in probability.
\end{itemize}
\end{cor}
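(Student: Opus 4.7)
The plan is to deduce both parts of the corollary directly from Theorem \ref{thm_topology} using the asymptotic absolute continuity relation \eqref{eq_asymptabscont} between $M_n$ and $N_n$. Concretely, $M_n$ is by definition the law of $N_n$ conditioned on the event $S_n = \{\calG_n \text{ is simple}\}$, and Bollob\'as's theorem gives $\liminf_{n \to \infty} \PP[S_n] > 0$. Consequently there is a constant $C > 0$ such that
\[ \PP[M_n \in A] \le C \cdot \PP[N_n \in A] \]
for every measurable event $A$ and every $n$ large enough, so any sequence of events with vanishing probability under the $N_n$ law still has vanishing probability under the $M_n$ law.

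For part (a), I would note that the property ``the underlying manifold has a single boundary component'' depends only on the cell complex $\calT_n$, so the corresponding events for $N_n$ and $M_n$ are literally the same subset of configuration space. Theorem \ref{thm_topology}(a) states that this event has probability tending to $1$ under $\PP_n$, and the observation above transfers this conclusion to $M_n$. For part (b), convergence in probability $g(\partial M_n)/n \to 1$ unpacks, for each fixed $\eps > 0$, into the a.a.s.\ statement $\PP[|g(\partial M_n) - n| > \eps n] \to 0$. The analogue for $N_n$ is Theorem \ref{thm_topology}(b); applying the absolute continuity inequality for each $\eps > 0$ separately yields the claim.

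There is no real obstacle in the corollary itself, as the substantive combinatorial work has already been packaged into Theorem \ref{thm_topology} via the peeling techniques outlined in the introduction. The only thing worth checking explicitly is that both the number of boundary components and the genus of the boundary are defined as the same combinatorial functionals of $\calT_n$ regardless of whether one passes to $M_n$ or stays with $N_n$; this is immediate from the fact that $M_n$ is obtained from $N_n$ purely by conditioning on a property of the dual graph $\calG_n$, without any modification of the underlying cell structure.
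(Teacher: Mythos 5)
Your argument is correct and follows exactly the route the paper implicitly takes: the paper derives Corollary~\ref{cor_topology} immediately from Theorem~\ref{thm_topology} by applying the transfer principle stated in~\eqref{eq_asymptabscont}, which is precisely the conditional-probability bound you spell out using Bollob\'as's result $\liminf_n \PP[\calG_n \text{ is simple}] > 0$. The only content the paper adds is noting that~\eqref{eq_asymptabscont} suffices, and your unpacking of convergence in probability into a family of vanishing-probability events (one per $\eps$) and your remark that the boundary-topology functionals are measurable in $\calT_n$ alone are exactly the details being left implicit.
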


\subsection{The number of vertices}

We start by studying the number of vertices. Let us write $V$ for the number of vertices of $\calT_n$. Note that $V$ is also the number of boundary components of $N_n$. As such the following proposition implies Theorem \ref{thm_topology}(a).

\begin{prp}\label{prp_vertices} We have
\[\PP_n[V=1] \to 1, \]
as $n\to\infty$.
\end{prp}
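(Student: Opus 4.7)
The plan is to use a first-moment method. Note that $V \geq 2$ if and only if there exists a proper non-empty subset $S \subsetneq [4n]$ of the $4n$ tetrahedral vertices that is a union of vertex equivalence classes under the identifications induced by the face gluings. Each such class $C$ corresponds to a boundary component of $N_n$ which is a closed triangulated surface with $|C|$ triangles (the vertex links) and $3|C|/2$ edges; hence $|C|$ is even and at least $2$, so $|S|$ is even, and if $V \geq 2$ one can find such an $S$ with $2 \leq |S| \leq 2n$. The union bound gives
\[
\PP_n[V \geq 2] \leq \sum_{\substack{S\subsetneq [4n]\\ |S|\text{ even},\, 2\leq|S|\leq 2n}} \PP_n[S\text{ is a union of vertex classes}].
\]

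For a fixed $S$ I would parametrise by its \emph{profile} $(n_0,n_1,n_2,n_3,n_4)$, where $n_j$ is the number of tetrahedra with exactly $j$ vertices in $S$; the $2$-faces of the $n$ tetrahedra split accordingly into types with counts
\[
F_0 = 4n_0 + n_1,\quad F_1 = 3n_1+2n_2,\quad F_2 = 2n_2+3n_3,\quad F_3 = n_3+4n_4.
\]
For $S$ to be a union of vertex classes no face gluing may identify a vertex of $S$ with a vertex of $S^c$; equivalently, the pairing $\omega_n$ must match faces within the same type (forcing each $F_i$ to be even) and for each pair of mixed faces the chosen bijection in $\sigma_n$ must preserve $S$, an event of conditional probability $1/3$ per mixed pair. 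This yields the explicit formula
\[
\PP_n[S\text{ is a union of vertex classes}] = \frac{F_0!!\,F_1!!\,F_2!!\,F_3!!}{(4n)!!}\cdot 3^{-(F_1+F_2)/2}
\]
when all $F_i$ are even, and $0$ otherwise. The number of subsets with a given profile is $\frac{n!}{\prod_j n_j!}\cdot 4^{n_1+n_3}\cdot 6^{n_2}$.

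The sum over profiles I would handle in two regimes. For profiles with $|S|$ bounded as $n\to\infty$, direct estimation yields a total of $O(1/n)$, dominated by the two $|S|=2$ profiles (with $n_2=1$ and $n_1=2$), each contributing $O(1/n)$. For profiles with $|S|=\Theta(n)$, Stirling's formula converts each term into $\exp(n\,\Phi(a))\cdot\mathrm{poly}(n)$, where $a_j=n_j/n$ and $\Phi$ is an explicit rate function involving the $a_j$'s together with the factor $3^{-(F_1+F_2)/2}$; the key point is that $\Phi < 0$ off the trivial extremes $a_0=1$ and $a_4=1$, so this regime contributes exponentially little. An alternative, arguably more transparent, route is to avoid large-deviation asymptotics altogether by separating ``aligned'' profiles ($n_1=n_2=n_3=0$), for which the sum collapses to $\sum_{k=1}^{n-1}\binom{n}{k}\binom{2n}{2k}\binom{4n}{4k}^{-1}=O(1/n)$ by a routine binomial identity, from the non-aligned ones, bounding the latter by brute force using the factor $3^{-(F_1+F_2)/2}$ to dominate the multinomial growth. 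The main obstacle is the bookkeeping in the non-aligned regime -- verifying that the exponential suppression from $3^{-(F_1+F_2)/2}$ truly outweighs the combinatorial counts -- but once this is dealt with, both regimes give $O(1/n)$, so $\PP_n[V \geq 2]=O(1/n)\to 0$, equivalently $\PP_n[V=1]\to 1$.
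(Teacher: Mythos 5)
Your argument is correct and is essentially the paper's: both are first-moment/union-bound arguments grounded in the observation that $V\ge 2$ forces a small invariant set of corners (size at most $2n$), both parametrise by the profile $(n_0,\dots,n_4)$ recording how many tetrahedra contribute $j$ corners, and both defer the same ``tedious'' verification that the resulting sum is $O(1/n)$. The only real difference is the parametrisation: you sum over invariant corner-subsets $S$ and get the clean exact probability $\frac{F_0!!\,F_1!!\,F_2!!\,F_3!!}{(4n)!!}\,3^{-(F_1+F_2)/2}$, whereas the paper sums over ``labelled vertices'' $a$ (local gluing patterns closing a single vertex class) and bounds the expected number of those; your route trades a somewhat larger index set for a more transparent per-term formula, but it is the same method.
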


\begin{proof} Let us write $V_{\mathrm{small}}:\Omega_n\to\NN$ for the random variable that counts the number of vertices of $\calT_n$ incident to most $2n$ tetrahedra (with multiplicity -- i.e. if a tetrahedron is incident to a vertex in multiple corners, it is counted multiple times). We will prove that
\[ \EE_n[V_{\mathrm{small}}]\to 0, \]
as $n\to\infty$. Note that this is sufficient to prove the proposition.

We will write 
\[\EE_n[V_{\mathrm{small}}] = \sum_a \EE_n[\ind_a].\]
Here the sum runs over ``labeled'' vertices $a$. A labelled vertex $a$ is the data of the gluings of all the labeled faces incident to a given vertex. Here $\ind_a:\Omega_n\to \{0,1\}$ is the indicator for the event that $a$ appears in $\omega$. As such
\[
\EE[\ind_a] = \frac{1}{3^{f} (4n-1)(4n-3) \cdots (4n-2f+1)},
\]
where $f$ is the number of faces incident to the given vertex.

Given $a$, write $n_1,n_2,n_3,n_4$ for the number of tetrahedra that are incident in $1$, $2$, $3$ and $4$ of their vertices to $a$ respectively. Note that the number of tetrahedron faces involved in such a gluing is given by
\[2f=3n_1+4(n_2+n_3+n_4) \]
This implies that $n_1$ must be even and that the number of gluings with the same selection of vertices is
\[3^{\frac{n_3}{2}+2n_4} (2f)!!.\]
The power of $3$ comes from the fact that only the faces with three vertices adjacent to them can be rotated. This number of faces is equal to $n_3+4n_4$.
As such we obtain
\[\EE_n[V_{\mathrm{small}}] = \sum_{\substack{0<n_1+n_2+n_3+n_4 \leq n  \\ n_1+2n_2+3n_3+4n_4 \leq 2n \\ n_1 \text{ even} }}
\binom{n}{n_1,n_2,n_3,n_4} \frac{3^{\frac{n_3}{2}+2n_4} (2f)!!}{3^{f} (4n-1)(4n-3)\cdots (4n-2f+1)}, \]
where we write
\[\binom{n}{n_1,n_2,n_3,n_4} = \frac{n!}{n_1!\cdot n_2!\cdot n_3!\cdot n_4!\cdot (n-n_1-n_2-n_3-n_4)!} \]
counts the number of subsets of the $n$ tetrahedra with the appropriate number of vertices in them.

We claim that this implies that $\EE_n[V_{\mathrm{small}}]=\bigO{n^{-1}}$. This follows by analyzing the terms in the sum. The largest terms in this sum are when $2f = 3n_1+4(n_2+n_3+n_4)$ is smallest. Indeed, using that $n_1+2n_2+3n_3+4n_4 \leq 2n$, an elementary but tedious computation shows that a term decreases when one of the $n_i's$ is increased. Given that the number of terms is quartic in $n$ and the number of terms that are larger than $\bigO{n^{-5}}$ is bounded, we obtain the estimate.
\end{proof}

\subsection{The number of edges}

The random variable that counts the number of edges that are incident to $k$ tetrahedra will be denoted by
\[ E_k: \Omega_n \to \NN.\]
In this variable, tetrahedra are counted \emph{with} multiplicity. That is, if an edge appears multiple times in the boundary of a given tetrahedron, this tetrahedron adds to its ``length'' each time. Note that
\[ \sum_{k\geq 0} k\cdot E_k = 6n.\]
We will also write
\[ E = \sum_{k\geq 0} E_k: \Omega_n \to \NN\]
for the total number of edges. 
The goal of this section is to study the distribution of $(E_k)_k$ as $n \to \infty$.

We will also count edges that we will call \emph{simple}. These are edges that neighbor each tetrahedron at most once. Figure \ref{pic_edge} shows an example. We will denote the number of simple edges by $E^\circ$ and the number of simple edges adjacent to $k$ tetrahedra by $E^\circ_k$. Note that $E_k^\circ = 0$ for all $k>n$, which is not at all necessary for non-simple edges.

\begin{figure}
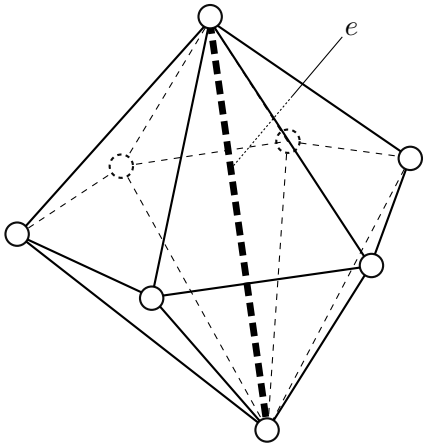

\begin{center}
\begin{overpic}{./Images/pic_edge}
\put(77.5,93) {$e$}
\end{overpic}
\caption{A simple edge $e$}\label{pic_edge}
\end{center}
\end{figure}

Concretely, we will prove the following estimates\footnote{When we write $k=o(f(n))$ for some function $f:\NN\to\NN$, what we mean is that the statement holds for any function $k:\NN\to\NN$ so that $k(n)=o(f(n)$.}
\begin{thm}[Combinatorics of edges]\label{thm_edges}
\begin{enumerate}[label={(\alph*)}]
\item \label{numtotal_edges} We have
\[ \EE_n[E] = \frac{1}{2} \log(n) + \bigO{1}\]
as $n\to\infty$.

\item \label{num_simple_edges_fixed_length} For all $k = o(\sqrt{n})$ we have
\[ \EE[E_k^\circ] = \frac{1}{2k} \cdot (1 + o(1)) \]
as $n\to \infty$. Moreover, the error is uniform in $k$.

\item \label{num_edges_fixed_length}For all $k=o(n)$ and $k_1\leq k_2\leq k$ we have
\[ \EE[E_k] \leq \frac{1}{2k}+\bigO{n^{-1}} \quad \text{and} \quad \EE\left[\sum_{l=k_1}^{k_2} E_l\right] \leq \frac{1}{2}\log(k_1/k_2)+\bigO{1} \]
as $n\to\infty$. In particular, for all $k=o(\sqrt{n})$ and $k_1\leq k_2\leq k$  we have
\[ \EE[E_k] = \frac{1}{2k} \cdot (1 + o(1)), \quad \EE\left[\sum_{l=k_1}^{k_2} E_l\right] = \frac{1}{2}\log(k_1/k_2)(1+o(1))\]
and 
\[ \EE[E_k-E_k^\circ] = o(1)\]
as $n\to\infty$.
\item\label{adjacent_edges}  For all $K,L=o\left(n^{1/3}\right)$ we have
\[ \EE[E_{KL}] = o(1) \]
as $n\to\infty$. Here $E_{KL}$ counts the number of pairs of edges of size $\leq K$ and $\leq L$ respectively that are incident to a common tetrahedron.
\end{enumerate}
\end{thm}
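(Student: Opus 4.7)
The plan is to view the face gluings as inducing a permutation $\sigma$ on the $12n$ \emph{flags} $\phi = (T, e, F)$, where $T$ is a tetrahedron, $e$ one of its six edges, and $F$ one of the two faces of $T$ containing $e$. If the gluing pairs $F$ with a face $F'$ of $T'$ via a simplicial map $\tau$, then $\sigma(\phi) := (T', \tau(e), F'')$, where $F''$ is the other face of $T'$ containing $\tau(e)$. Each interior edge of $\calT_n$ of length $k$ corresponds to exactly two cycles of $\sigma$ of length $k$ (one per direction of travel around the edge), so each such edge contributes exactly $2k$ flags whose $\sigma$-cycle has length $k$. All four parts are then estimates on the cycle structure of $\sigma$, which I attack by a peeling procedure: at step $i$ of an exploration, reveal the pairing of the ``current'' face with a uniformly random unpaired face (out of $4n - 2i + 1$ remaining) together with a uniformly random simplicial map (one of $3$).

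For part (b), the probability that the cycle through a fixed flag $\phi_0$ closes back to $\phi_0$ at step exactly $k$ requires one specific outcome among $3(4n - 2k + 1)$ equally likely choices at step $k$, giving $\frac{1}{12n}(1 + O(k/n))$; simpleness (no tetrahedron revisited during steps $1, \ldots, k-1$) holds with probability $1 - O(k^2/n)$ by a union bound over pairs of steps. Summing over all $12n$ flags --- each simple fan of length $k$ being counted exactly $2k$ times --- yields $\EE[E_k^\circ] = \frac{1}{2k}(1 + O(k^2/n))$ uniformly in $k$, which gives the claimed asymptotic for $k = o(\sqrt n)$.

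For part (c), the same peeling without the simpleness constraint gives $\EE[E_k] \leq \frac{1}{2k} + O(n^{-1})$ for $k = o(n)$, since the closing probability is bounded by $\frac{1}{3(4n - 2k + 1)}$ and the corrections from non-simple cycles are $O(n^{-1})$; summing over $l \in [k_1, k_2]$ yields the logarithmic bound. For part (a), I use the cycle identity $2E = \sum_\phi 1/L(\phi)$ (each cycle of length $\ell$ contains $\ell$ flags, contributing $1/\ell$ each), so $\EE[E] = 6n \cdot \EE[1/L(\phi_0)]$ for any fixed flag by exchangeability. Writing $\EE[1/L] = \sum_k \PP[L = k]/k$, using the peeling bound $\PP[L = k] \leq \frac{1}{3(4n - 2k + 1)}$ for $k$ up to a cutoff, and controlling the large-$k$ tail by $\sum_k \PP[L = k] \leq 1$, gives $\EE[1/L] = \frac{\log n}{12n} + O(1/n)$, hence $\EE[E] = \frac{1}{2}\log n + O(1)$.

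For part (d), pairs of edges sharing a tetrahedron are enumerated directly: pick $T_0$ ($n$ ways), pick an unordered pair of edges of $T_0$ ($\binom{6}{2} = 15$ ways), and consider two simple fans of lengths $k$ and $l$ through them. In the dominant case where the two fans meet only at $T_0$, an elementary count using the peeling probabilities for each fan (and the near-independence of the two explorations) yields expected value $\frac{15}{144\, n}(1 + o(1))$ per pair $(k, l)$, independent of $k$ and $l$; summing over $k \leq K, l \leq L$ gives $\EE[E_{KL}] = O(KL/n)$, which is $o(1)$ for $K, L = o(n^{1/3})$ (in fact already for $K, L = o(\sqrt n)$). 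Fans sharing additional tetrahedra contribute an extra factor of $1/n$ per extra shared tetrahedron and are of lower order. The main technical obstacle I anticipate is providing uniform-in-$k$ control of the peeling corrections so that the error terms in (a) accumulate to $O(1)$ rather than something larger; the delicate part is balancing the ``no premature closure'' against the ``no revisit'' probabilities as $k$ grows toward $\sqrt n$ and beyond.
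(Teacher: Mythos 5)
Your flag formalism (tracking the $\sigma$-cycle of a triple $(T,e,F)$) is essentially a re-encoding of the paper's ``labelled oriented edges'' and its Peeling Algorithm 2; for parts (b), (c), and (d) your plan is the same local exploration the paper carries out, modulo one point: you assert that ``corrections from non-simple cycles are $O(n^{-1})$'' without an argument, while the paper needs a separate bookkeeping device for this (tracking \emph{singular} faces -- faces that are their own neighbours or share neighbours -- and bounding their expected number; this is Lemma~\ref{lem_singfaces} and the $\ind_{f^{(k)}\text{ singular}}$ splitting). The issue you must handle is that when a cycle traverses a face pairing twice, the $k$-th ``revealed'' gluing may already be determined by the past, so the clean ``one out of $3(4n-2k+1)$ equally likely'' conditional probability no longer applies at that step. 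This is a real gap, though a patchable one along the lines of the paper's singular-face count.

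The more serious problem is in your proof of part (a). The identity $\EE[E]=6n\,\EE[1/L(\phi_0)]$ is fine, but you only produce the one-sided inequality $\PP[L=k]\le \frac{1}{3(4n-2k+1)}$ and then claim $\EE[1/L]=\frac{\log n}{12n}+O(1/n)$. This cannot follow: the upper bound together with $\sum_k\PP[L=k]\le 1$ only yields an \emph{upper} bound on $\EE[1/L]$, and moreover the unrestricted sum of your upper bounds is
\[
\sum_{k=1}^{2n}\frac{1}{3k(4n-2k+1)}=\frac{1}{3(4n+1)}\left(\sum_{k=1}^{2n}\frac1k+2\sum_{k=1}^{2n}\frac{1}{4n-2k+1}\right)=\frac{\log n}{6n}+\bigO{\frac1n},
\]
\emph{twice} the desired $\frac{\log n}{12n}$, so the constant is genuinely underdetermined by what you have written. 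To close this you would need a matching lower bound $\PP[L=k]\ge\frac{1}{3(4n-2k+1)}\bigl(1-O(k/n)\bigr)$, uniformly for $k$ up to a cutoff of order close to $n$ (the cutoff must satisfy $\log(\text{cutoff})=\log n+O(1)$), which in turn requires controlling, for all such $k$, both premature closure and the availability of the single ``closing'' face at step $k$ -- precisely the delicate uniform-in-$k$ accumulation you flag as a worry. The paper avoids this entirely by using a different, \emph{global} peeling for part (a) (Algorithm~1, which glues a uniformly random pair of boundary faces at each of $2n$ steps, so that the expected number of edges closed at step $t$ is $\frac{1}{4n-2t-1}$ up to singular-face corrections, and summing over $t$ gives $\frac12\log n+O(1)$ directly as a two-sided estimate). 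Either adopt that global peeling for (a), or supply the lower bound on $\PP[L=k]$ up to a near-linear cutoff and a separate argument that the tail contributes $O(1/n)$.
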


Before we get to the proof of this theorem, let us briefly note how to derive the Euler charateristic of $\partial N_n$ from it.

\begin{proof}[Proof of Theorem \ref{thm_topology}(b)]
Writing $v$, $e$ and $f$ for the number of vertices, edges and faces of the triangulation on $\partial N_n$, we have
\[ v= 2E \quad e=6n \quad \text{and} \quad f=4n.\]
As such, Theorem \ref{thm_edges}(a) implies our claim.
\end{proof}

\subsubsection{Peeling}
In order to prove Theorem \ref{thm_edges}(a), (c) and (d) we will use peeling (see for instance \cite{Curien}). Before we get to the proof, we need some preparation.

The main idea behind peeling is to build our random cell complex $\calT_n$ in a specific order. In particular, we will describe a peeling algorithm that determines a sequence of cell complexes
\[ \calT_n^{(0)},\; \calT_n^{(1)}, \ldots, \calT^{(2n)}_n \]
where $\calT_n^{(0)}$ consists of $n$ disjoint tetrahedra, $\calT^{(2n)}_n = \calT_n$ (in the sense that it has the same distribution) and in general $\calT^{(i+1)}_n$ can be obtained from $\calT^{(i)}_n$ by identifying exactly one pair of faces of $\calT^{(i)}_n$.

We will use two different peeling algorithms. The first to prove part (a) and the second to prove parts (c) and (d).

\subsubsection{Algorithm 1}
The first algorithm is very simple and closely resembles that of \cite[Section 8]{BM}.

\medskip
\fbox{ \begin{minipage}{14cm}
\textbf{Peeling algorithm 1}: \\
\underline{Initialisation}: \\
Objects: faces $f^{(0)}, f^{(1)},\ldots,f^{(2n)}$, $f'^{(0)},f'^{(1)},\ldots,f'^{(2n)}$ and $t\in\ZZ$.

\begin{itemize}
\item[-] Set $t = 0$. 
\item[-] Set $\calT_n^{(0)}$ equal to a disjoint union of $n$ tetrahedra. 
\item[-] Set $f^{(0)}$ equal to a face in $\partial \calT_n^{(0)}$, picked uniformly at random.
\end{itemize}

\underline{Iteration}: while $t< 2n$, repeat the following steps:
\begin{enumerate}
\item Glue the face $f^{(t)}$ to a uniformly random face $f'^{(t)}$ in $\partial \calT_n^{(t)} \setminus f^{(t)}$, with a uniformly random gluing. Call the result $\calT^{(t+1)}_n$.
\item If $t<2n$: Pick a uniformly random face $f^{(t+1)} \subset \partial \calT_n^{(t)}$      
\item Add $1$ to $t$.
\end{enumerate}
\end{minipage}}\medskip

Note that the distribution of $\calT_n^{(2n)}$ is the same as that of $\calT_n$.

\subsubsection{Closing off edges}

The reason for setting up the peeling algorithm is that we can now control the number of edges in $\calT_n$ by bounding the number of edges that are \emph{closed off} -- i.e. that disappear from the boundary -- during each step of the process. 

As such, let us define random variables $E^{(t)}$ that count the number of edges that are closed off when $\calT^{(t)}_n$ is created. This is the number of edges that lie in $\partial\calT^{(t-1)}_n$ but not in $\partial\calT^{(t)}_n$. Note that
\[ E = \sum_{t=1}^{2n} E^{(t)}.\]

Moreover, since any edge that gets closed off at the $t^{th}$ step necessarily lies in $f^{(t)}$, we have $E^{(t)} \leq 3$. One of the things we will argue below is that most of the time, we actually have $E^{(t)} \leq 1$. To this end, consider Figure \ref{pic_face}. It shows a schematic overview of the situation around the face $f^{(t)}$ at time $t$. Note that some of the faces $f_1$, $f_2$, $f_3$ and $f^{(t)}$ may coincide. However, if they don't, only one edge can be created at step $t$: $e_i$ is then closed off if and only if $f^{(t)}$ is glued to $f_i$ with exactly one out of the three possible face identifications.

\begin{figure}
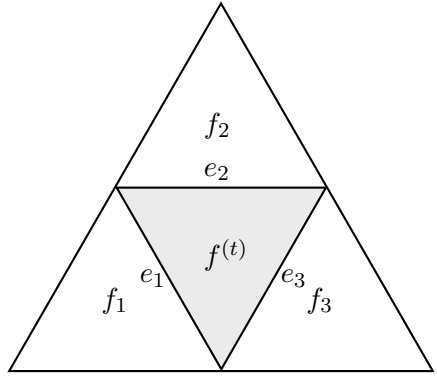

\begin{center}
\begin{overpic}{./Images/pic_face}
\put(46,25) {$f^{(t)}$}
\put(22,15) {$f_1$}
\put(46,56) {$f_2$}
\put(70,15) {$f_3$}
\put(31,21) {$e_1$}
\put(46,46) {$e_2$}
\put(64,21) {$e_3$}
\end{overpic}
\caption{The face $f^{(t)}$ and its neighbors}\label{pic_face}
\end{center}
\end{figure}

So, in what follows, we will make a distinction between \emph{singular faces} -- faces that are their own neighbor or of which some of the neighbors coincide -- and \emph{regular faces} -- faces that are not singular. We will write $F^{(t)}_{\mathrm{sing}}$ for the random variable that counts the number of singular faces in $\partial\calT_n^{(t)}$. Likewise, we define two sequences of random variables $E^{(t)}_{\mathrm{sing}}$,  $E^{(t)}_{\mathrm{reg}}$ where
\[
E^{(t)}_{\mathrm{sing}} = \left\{\begin{array}{ll} 
E^{(t)} & \text{if } f^{(t)} \text{ is singular} \\
0 & \text{otherwise.}
\end{array} \right. 
\quad \text{and} \quad
E^{(t)}_{\mathrm{reg}} = E^{(t)} - E^{(t)}_{\mathrm{sing}}.
\]
Since there are $4n-2t$ faces left when the $t^{th}$ step starts, we have
\begin{equation}\label{eq_regedges}
\EE[E^{(t)}_{\mathrm{reg}}\;|\; F^{(t)}_{\mathrm{sing}}] = \frac{4n-2t-F^{(t)}_{\mathrm{sing}}}{4n-2t} \frac{3}{3(4n-2t-1)} =  \frac{4n-2t-F^{(t)}_{\mathrm{sing}}}{(4n-2t)(4n-2t-1)} 
\end{equation}
and since at most three edges (those on $f^{(t)}$) are closed off during $t^{th}$ step and moreover there are at most $3$ choices for $f'^{(t)}$ and $3$ gluings per choice that result in an edge closure, we have:
\begin{equation}\label{eq_singedges}
 \EE[E^{(t)}_{\mathrm{sing}}\;|\; F^{(t)}_{\mathrm{sing}}]  \leq \frac{F^{(t)}_{\mathrm{sing}}}{4n-2t} \frac{3\cdot 3 \cdot 3}{3(4n-2t-1)} = \frac{9 \cdot F^{(t)}_{\mathrm{sing}}}{(4n-2t)(4n-2t-1)} . 
\end{equation}

So, we need to control $F^{(t)}_{\mathrm{sing}}$. 
\begin{lem}\label{lem_singfaces} For all $t\in\NN$ so that $t < 2n$.
\[ \EE_n\left[F^{(t)}_{\mathrm{sing}} \right] \leq 12 \cdot \log\left( \frac{2n}{2n-t} \right) + o(1) \]
as $n\to\infty$. The implied error is independent of $t$
\end{lem}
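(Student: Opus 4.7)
The plan is to exploit $F^{(0)}_{\mathrm{sing}} = 0$ and track how many new singular faces the peeling algorithm creates at each step. If $\Delta^{(s)}$ denotes the number of boundary faces that transition from non-singular in $\calT_n^{(s-1)}$ to singular in $\calT_n^{(s)}$, then since a face leaving the boundary only lowers $F^{(s)}_{\mathrm{sing}}$ we have
\[
\EE_n\left[F^{(t)}_{\mathrm{sing}}\right] \leq \sum_{s=1}^{t} \EE_n\left[\Delta^{(s)}\right].
\]
The target reduces to showing $\EE_n[\Delta^{(s)}] = O(1/(4n-2s))$; a comparison with a logarithm then yields the stated bound with room to spare.

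The first key step is to localise where singularity can be born. The gluing of $f^{(s-1)}$ to $f'^{(s-1)}$ identifies three pairs of edges, and edge-neighbor relations among boundary faces change only across those three identified edges. Concretely, fix an edge $e$ of $f^{(s-1)}$, let $e^*$ be the edge of $f'^{(s-1)}$ glued to $e$, write $g$ for the previous neighbor of $f^{(s-1)}$ across $e$ and $g'$ for that of $f'^{(s-1)}$ across $e^*$; then after the gluing $g$ and $g'$ are neighbors while no other boundary face's neighbor set is modified. So at most six boundary faces (the pairs $\{g, g'\}$ ranging over the three edges of $f^{(s-1)}$) can become newly singular.

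The second key step is the probabilistic estimate. For $g$ (assumed non-singular at $s-1$) to become newly singular, $g'$ must land in the set $\{g\} \cup (\mathcal{N}_{s-1}(g) \setminus \{f^{(s-1)}\})$ of at most three elements. For any fixed target face $h_0$, the event $g' = h_0$ forces $f'^{(s-1)}$ to be one of the at most three boundary neighbors of $h_0$ and the corresponding shared edge to be the one chosen in the gluing; these happen with joint probability at most $3 \cdot \frac{1}{4n-2s+1} \cdot \frac{1}{3} = \frac{1}{4n-2s+1}$. Union-bounding over the three elements of the bad set and then over the six affected faces gives $\EE_n[\Delta^{(s)}] \leq 18/(4n-2s+1)$. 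Summing over $s$ yields $9 \log(2n/(2n-t)) + O(1)$, comfortably within the stated constant $12$.

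The main obstacle I anticipate is the clean treatment of degenerate configurations: if $f^{(s-1)}$ is itself already singular, its three ``neighbors'' may not be distinct, in which case the enumeration of pairs $(f'^{(s-1)}, e^*)$ producing $g' = h_0$ needs minor adjustment. Since we only want an upper bound, such degeneracies can only shrink the bad set or make potential events coincide, so the estimate survives; spelling this out explicitly---perhaps by separately handling the (rare) case where $f^{(s-1)}$ is singular---keeps the bookkeeping honest. A secondary point to verify is that the peeling algorithm produces a sample with the correct distribution on $\calT_n$, which is the standard exchangeability argument implicit in the paper's setup.
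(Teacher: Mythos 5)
Your proposal is correct and takes essentially the same approach as the paper: both telescope $F^{(t)}_{\mathrm{sing}}$ into $\sum_{s \le t} \Delta^{(s)}$, observe that new singular faces can only appear among the at most six boundary neighbours of the two glued faces, and use the uniform choice of $f'^{(s)}$ (and of the gluing) to bound the per-step expectation by a term of order $1/(4n-2s)$ before summing. Your more explicit per-candidate union bound yields $18/(4n-2s+1)$ rather than the paper's $12/(4n-2s-1)$, but both sum to within the stated $12\log\left(\frac{2n}{2n-t}\right) + o(1)$.
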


\begin{proof}  Let us write $\Delta F^{(t)}_{\mathrm{sing}}$ for the random variable that counts the number of singular faces that is created at step $t$ -- since we are only interested in an upper bound, we will ignore singular faces that disappear. Because $F^{(0)}_{\mathrm{sing}} = 0$, we have
\[ F^{(t)}_{\mathrm{sing}} \leq \sum_{s=1}^t \Delta F^{(s)}_{\mathrm{sing}} .\]
So, let us try to control $\EE[\Delta F^{(s)}]$. The only faces whose neighborhood changes during step $s$ are the neighbors of $f^{(s)}$ and $f'^{(s)}$. If such a neighbor of say $f^{(s)}$ is regular, it becomes singular only if one of its neighbors is also a neighbor of $f'^{(s)}$. Likewise, a regular neighbor of $f'^{(s)}$ becomes singular only if it shares a neighbor with $f^{(s)}$. In other words, $\Delta F^{(s)}$ can only be positive if the combinatorial distance -- the number of edges that needs to be traversed in $\partial\calT_n^{(s)}$ in order to move from one face to the other -- between $f^{(s)}$ and $f'^{(s)}$ is at most $3$. Note that this is independent of whether or not $f^{(s)}$ and $f'^{(s)}$ are regular.

Since there are at most $3\cdot 2\cdot 2 = 12$ faces at combinatorial distance at most $3$ from $f^{(s)}$, we have
\[ \EE[ \Delta F^{(s)}_{\mathrm{sing}}|\; F^{(s-1)}_{\mathrm{sing}} ] = \EE[\ind_{f^{(s)} } \Delta F^{(s)}_{\mathrm{sing}}|\; F^{(s-1)}_{\mathrm{sing}} ]
\frac{12}{4n-2s-1}.
  \] 
This implies that
\[\EE_n\left[ F^{(t)}_{\mathrm{sing}} \right] \leq  \sum_{1\leq t\leq 2n-1} \frac{12}{4n-2t-1} \leq 12 \sum_{k=4n-2t-1}^{4n-1} \frac{1}{k} = 12 \log\left(\frac{4n}{4n-2t} \right) + o(1) \]
as $n\to\infty$.
\end{proof}

\subsubsection{The total edge count}

We now have all the set up we need for the first part of Theorem \ref{thm_edges}:

\begin{proof}[Proof of Theorem \ref{thm_edges}(a)]
Fix any $\alpha \in (0,1)$. We have 
\[ \EE_n[E^{(t)}] = \EE_n\left[E^{(t)} \cdot \ind_{F^{(t)}_{\mathrm{sing}} \geq t^\alpha}\right] + \EE_n\left[E^{(t)} \cdot \ind_{F^{(t)}_{\mathrm{sing}} < t^\alpha} \right]\]
We start with the first term. By Lemma \ref{lem_singfaces}, \eqref{eq_regedges} and Markov's inequality we have
\[ 0 \leq \EE_n\left[E^{(t)}_{\mathrm{reg}} \cdot \ind_{F^{(t)}_{\mathrm{sing}} \geq t^\alpha}\right] \leq \frac{12 \log(t)}{n^\alpha} \frac{1}{4n-2t-1}, \]
for all $n$ large enough. Likewise, using Lemma \ref{lem_singfaces}, \eqref{eq_singedges} and Markov's inequality we obtain
\begin{equation}\label{eq_lotsofsingfaces}
 0 \leq \EE_n\left[E^{(t)}_{\mathrm{reg}} \cdot \ind_{F^{(t)}_{\mathrm{sing}} \geq t^\alpha}\right] \leq \frac{12 \log(t)}{n^\alpha} \frac{9}{4n-2t-1}.  
 \end{equation}
So we obtain
\[ \sum_{t=1}^{2n} \EE_n\left[E^{(t)} \cdot \ind_{F^{(t)}_{\mathrm{sing}} \geq t^\alpha} \right]  = \bigO{\frac{\log(n)^2}{n^\alpha}} \]
as $n\to \infty$. In words: most edges are created when few singular faces are present.

So, let us control this term. Again using \eqref{eq_regedges}, we have
\[ \frac{4n-2t-t^\alpha}{4n-2t} \frac{1}{4n-2t-1} \leq \EE_n\left[E^{(t)}_{\mathrm{reg}} \cdot \ind_{F^{(t)}_{\mathrm{sing}} < t^\alpha}\right] \leq  \frac{1}{4n-2t-1}.\]
Using \eqref{eq_singedges}, 
\[ 0 \leq \EE_n\left[E^{(t)}_{\mathrm{sing}} \cdot \ind_{F^{(t)}_{\mathrm{sing}} < t^\alpha}\right] \leq \frac{9 t^\alpha}{(4n-2t )(4n-2t-1)}. \]
These last two bounds give
\[ \sum_{t=1}^{2n} \EE_n\left[E^{(t)} \cdot \ind_{F^{(t)}_{\mathrm{sing}} < t^\alpha} \right]  = \sum_{t=1}^{2n} \frac{1}{4n-2t-1} + \bigO{1} = \frac{1}{2}\log(n) + \bigO{1} \]
Together with \eqref{eq_lotsofsingfaces}, this proves our claim.
\end{proof}

\subsubsection{Simple edges}

The proof of part (b) of our theorem -- the count of the expected number of simple edges -- will not use a peeling algorithm.

\begin{proof}[Proof of Theorem \ref{thm_edges}(b)]

We write
\[ \EE_n[E_k^\circ] = \sum_{(c_1\; c_2 \; \ldots \; c_k) } \EE_n[\ind_{(c_1\; c_2 \; \ldots \; c_k)}]  \]
where the sum runs over all cycles $(c_1\; c_2 \; \ldots \; c_k)$ 
of length $k$ so that
\begin{itemize}
\item[(i)] $c_i$ is a corner in some tetrahedron, i.e. a pair faces
\item[(ii)] and at most one corner of any given tetrahedron appears in the sequence.
\end{itemize}
Finally, given $\omega\in \Omega_n$,
\[ \ind_{(c_1\; c_2 \; \ldots \; c_k)} (\omega) = \left\{ \begin{array}{ll}
1 & (c_1\;c_2\; \ldots \; c_k) \text{ appears around an edge in }\omega \\
0 & \text{otherwise}
\end{array} \right. \]

It follows from (ii) that every cycle $(c_1\; c_2 \; \ldots \; c_k)$ in the sum corresponds to an identification of $k$ pairs of faces. As such
\[ \EE_n[\ind_{(c_1\; c_2 \; \ldots \; c_k)}] = \frac{1}{3^k(4n-1)(4n-3) \cdots (4n-2k+1)}.\]

So, all that remains is counting the number of possible cycles $(c_1\;c_2\;\ldots\;c_k)$ of corners. Writing $l$ for the number of tetrahedra of which we use two corners, we have
\[\card{\{(c_1\;c_2\;\ldots\;c_k)\}} = \frac{1}{2k} \binom{n}{k} \cdot 6^k \cdot 2^k \cdot k!.\]

The reason for this expression is as follows. First we count sequences instead of cycles:
\begin{itemize}
\item This gives a total of $\binom{n}{k}$ choices for the tetrathedra.
\item Per tetrahedron out of which a corner is used, we have a choice of $6$ corners. This gives rise to a factor $6^k$
\item Per corner that is used, we have two choices for the order in which the faces of that corner appear. This leads to a factor $2^k$
\item Finally, there are $k!$ ways to order the sequence.
\end{itemize}
Since we don't want to make a difference between sequences that differ by a cyclic permutation or are each others inverse, we divide by $2k$.

So we get
\begin{multline*}
 \EE_n[E_k^\circ] = \frac{1}{2k} \prod_{i=1}^k \frac{4n-4i+4}{4n-2i+1} \\ 
 = \frac{1}{2k} \exp\left(\sum_{i=1}^k \log\left(1-\frac{2i-3}{4n-2i+1}\right)\right) \\
  =  \frac{1}{2k}   \exp\left(- \sum_{i=1}^k \frac{2i-3}{4n-2i+1} + \bigO{n^{-2}}\right) \\
  =  \frac{1}{2k}  \exp(o(1))
 \end{multline*}
Where we used the fact that $k=o(\sqrt{n})$ in the last line.
\end{proof}

\subsubsection{Algorithm 2}

The second algorithm is actually a collection of algorithms, tailored towards counting the number of edges incendent to two edges in a fixed tetrahedron. As such, it starts peeling our random triangulation around a fixed starting edge $e$ and then continuous to peel around another fixed edge $e'$ once $e$ is closed.

\medskip
\fbox{ \begin{minipage}{14cm}
\textbf{Peeling algorithm 2}: \\
\underline{Input}: \\
A labelled tetrahedron $\tau$ and two fixed labeled oriented edges $e, e'\subset \tau$.

\underline{Initialisation}: \\
Objects: oriented edges $e^{(0)}, e^{(1)},\ldots, e^{(2n)}$, faces $f^{(0)}, f^{(1)},\ldots,f^{(2n)}$, $f'^{(0)},f'^{(1)},\ldots,f'^{(2n)}$ and $t\in\ZZ$.

\begin{itemize}
\item[-] Set $t = 0$. 
\item[-] Set $\calT_n^{(0)}$ equal to a disjoint union of $n$ tetrahedra, containing $\tau$. 
\item[-] Set $e^{(0)}=e$.
\end{itemize}

\underline{Iteration}: while $t< 2n$, repeat the following steps:
\begin{enumerate}
\item Glue the face $f^{(t)}$ to the right of $e^{(t)}$ to a uniformly random face $f'^{(t)}$ in $\partial \calT_n^{(t)} \setminus f$, with a uniformly random gluing. Call the result $\calT^{(t+1)}_n$.
\item \begin{itemize}
      \item[-] If $t+1<2n$ and $e^{(t)} \nsubseteq \partial\calT^{(t+1)}_n$:  
      \begin{itemize}
       \item If $e'\subset \partial\calT^{(t+1)}$, set $e^{(t+1)} = e'$. 
       \item Else: pick a uniformly random edge $e^{(t+1)} \subset \partial \calT_n^{(0)}$ and orient it randomly.
      \end{itemize}           
      \item[-] Else:  $e^{(t+1)} = e^{(t)}$
      \end{itemize}
\item Add $1$ to $t$.
\end{enumerate}
\end{minipage}}\medskip

Again note that the distribution of $\calT_n^{(2n)}$ is the same as that of $\calT_n$. Figure \ref{pic_initsetup} shows what the initial set up looks like.
\begin{figure}[!ht]
\begin{center}
\begin{overpic}{./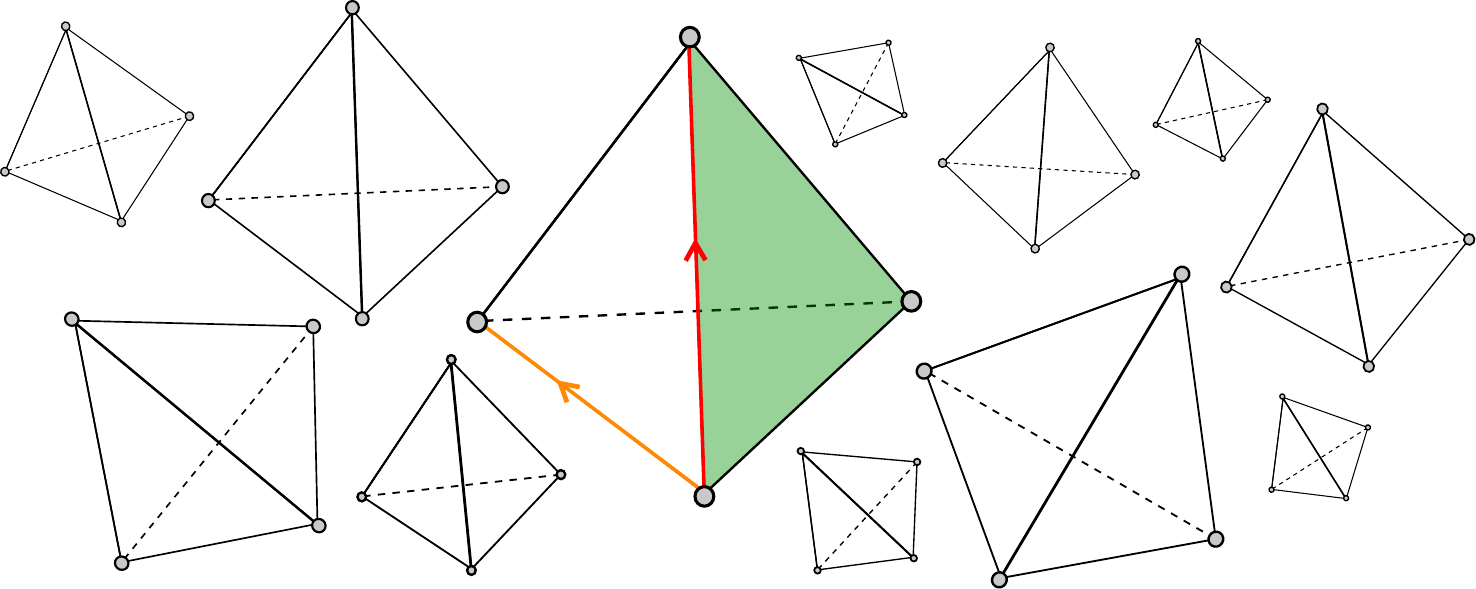}
\put(45,22) {$e$}
\put(38.5,14.5) {$e'$}
\put(50,21) {$f^{(0)}$}
\put(38,30) {$\tau$}
\put(90,2) {$\calT_n^{(0)}$}
\end{overpic}
\caption{The initial set up}\label{pic_initsetup}
\end{center}
\end{figure}

\subsubsection{All edges}

\begin{proof}[Proof of Theorem \ref{thm_edges}(c)]
Since we already have Theorem \ref{thm_edges}(b), we only need an upper bound on $\EE[E_k]$. In order to prove such a bound, we will write
\[ \EE[E_k] = \frac{1}{2k} \sum_e \EE[\ind_e^k]\]
where the sum runs over labelled oriented edges $e$ in our collection of $n$ tetrahedra (so the sum has $12n$ terms in total) and $\ind_e^k$ is the indicator for the event that $e$ is incident to exactly $k$ corners in the complex $\calT_n$. 

In order to bound  $\EE[\ind_e^k]$ from above, we use our second algorithm with $e$ as input. The second oriented edge $e'$ that the algorithm uses doesn't play a role in this proof, so we pick an arbitrary edge. We will also only care what happens in the first $k$ steps of the process.

Just like before, if during every step of the process, $f^{(t)}$ has three distinct neighbors, none of which is is $f^{(t)}$ itself -- i.e. if $f^{(t)}$ is regular --, it is easy to control the probability that our edge closes up in exactly $k$ steps. So, just like before, we need to bound the probability that in the first $k$ steps, our face becomes singular.

With the same argument as in Lemma \ref{lem_singfaces}, we have
\[\EE\left[\ind_{f^{(t)}\text{ is singular}}\right] \leq \frac{12}{4n-2t+1} + \EE\left[\ind_{f^{(t-1)}\text{ is singular}}\right].\]
In particular,
\[\EE\left[\ind_{f^{(k)}\text{ is singular}}\right] \leq \frac{12 k}{4n-2k+1}. \]

After $k-1$ gluings, there are $4n-2k+1$ faces left, we obtain, and even if a singular face is involved in the $k^{th}$ gluing, there are at most $3$ possible gluings that result in a closure. So we get:
\begin{multline*}
 \EE[\ind_e^k] = \EE\left[\ind_e^k\ind_{f^{(k)}\text{ is singular}}\right] + \EE\left[\ind_e^k\ind_{f^{(k)}\text{ is regular}}\right]  \\
 \leq \frac{1}{3(4n-2k+1)} + \frac{12 k}{4n-2k+1} \frac{3}{4n-2k+1}.
 \end{multline*}
This means that
\[
 \EE[E_k] \leq \frac{12n}{2k} \left( \frac{1}{3(4n-2k+1)} + \frac{12 k}{4n-2k+1} \frac{3}{4n-2k} \right), \]
 which proves our claim.
\end{proof}

\begin{proof}[Proof of Theorem \ref{thm_edges}(d)]
Let $k\leq K$ and $l\leq L$. Moreover, let $E_{kl}'$ denote the number of pairs of edges of sizes rexactly $k$ and $l$ respectively that are incident to a common tetrahedron. Just like in the proof above, we will write
\[ \EE[E_{kl}'] = \frac{1}{4kl} \sum_{e,e'} \EE[\ind_{e,e'}^{kl}]\]
where the sum runs over pairs of labelled oriented edges $e, e'$ that are incident to a single tetrahedron (so the sum has $12n\cdot 10$ terms in total) and $\ind_{e,e'}^{kl}$ is the indicator for the event that $e$ is incident to exactly $k$ and $e'$ to $l$ corners in the complex $\calT_n$. 

We again write
\begin{multline*}
\EE[\ind_{e,e'}^{kl}] =  \EE\left[\ind_{e,e'}^{kl}\ind_{f^{(k)}\text{ is singular}}\right] + \EE\left[\ind_{e,e'}^{kl}\ind_{f^{(k)}\text{ is regular and } f^{(k+l)} \text{ is singular}}\right] \\
 + \EE\left[\ind_{e,e'}^{kl}\ind_{f^{(k)} \text{ and } f^{(k+l)}\text{ are regular}}\right] .
 \end{multline*}
So we obtain, with exactly the same arguments as in the previous proof:

\begin{multline*}
\EE[\ind_{e,e'}^{kl}] \leq \frac{12k}{4n-2k+1}\frac{3}{4n-2k+1} \\ + \frac{1}{3(4n-2k+1)} \frac{12l}{4n-2k-2l+1} \frac{3}{4n-2k-2l+1} \\
 + \frac{1}{3(4n-2k+1)}\frac{1}{3(4n-2k-2l+1)} 
 \end{multline*}
So, multiplying this with $120 n$ and using that $k,l=o(n^{1/3})$ uniformly gives
\[ \EE[E_{kl}'] = o(n^{-2/3})\]
as $n\to \infty$, where the implied constant is uniform over $k,l$. Now summing over $k$ and $l$ gives
\[ \EE[E_{KL}] = o(1)\]
as $n\to\infty$.
\end{proof}

%%%%%%%%%%%%%%%%%%%%%%%%%%%%%%

\subsection{Betti numbers} \label{betti_comb}

We give here the proof of our estimates on Betti numbers of $M_n$ (or $N_n$) in Theorem \ref{thm_main_topology}(d). First, a generating family for $H_1(M_n, \partial M_n)$ is given by images of the edges of $\mathcal T_n$. Applying Markov's inequality to \ref{thm_edges},(a) we get that
\[
b_1(M_n, \partial M_n) = o(\theta(n)) \quad \text{as }n\to\infty
\]
for any function $\theta:\NN\to\RR$ such that $\lim_{n\to\infty} \theta(n)/\log(n) = +\infty$, establishing the first estimate. From the connectedness of the boundary \ref{thm_topology},(a) it follows that with asymptotic probability 1 we get an exact sequence
\[
0 \to H_2(M_n) \to H_2(M_n, \partial M_n) \to H_1(\partial M_n) \to H_1(M_n) \to H_1(M_n, \partial M_n) \to 0.
\]
with asymptotic probability 1. Now this exact sequence and Poincar\'e duality (the ``half lives, half dies'' argument) imply that
\[
b_1(M_n) = \frac 1 2 b_1(\partial M_n) + b_1(M_n, \partial M_n)
\]
and together with the estimate for $b_1(M_n, \partial M_n)$, \ref{thm_topology},(b) we can conclude that with asymptotic probability 1 we have 
\[
\abs{b_1(M_n) - n} = o(\theta(n))
\]
for any function $\theta:\NN\to\RR$ that grows super-logarithmically, which is the second estimate. 

%%%%%%%%%%%%%%%%%%%%%%%%%%%%%%

\subsection{Heegaard genus} \label{double_genus}

Here we prove the estimates on Heegaard genus of the double $\double M_n$ (or $\double N_n$) of Theorem \ref{thm_main_topology}(c), following an argument of Nathan Dunfield. Recall that $E$ is the number of edges in the triangulation $\mathcal T_n$ (equivalently the number of interior edges in the cellulation of $M_n$). We will first prove that
\begin{equation} \label{Nathan_trick}
  g(\double M_n) \le n + 1 + E
\end{equation}
which in view of Theorem \ref{thm_edges} implies the upper bound we are after.

To prove \eqref{Nathan_trick} we observe that $M_n$ minus regular neighbourhoods of its interior edges is a handlebody of genus $n+1$, as it is a regular neighbourhood of the dual graph to the cellulation of $M_n$ in truncated tetrahedra, which is a 4-valent graph on $n$ vertices. For each edge $e$ we write its regular neighbourhood $U_e$ as $D_e \times e$ where $D_e$ is a disc. We split it as $D_e = D_e^1 \cup D_e^2$ where $D_e^i$ are half-discs and we put $U_e^i = D_e^i \times e$; note that $M_n \cup \bigcup_e U_e^i$ is still a handlebody of genus $n+1$. We consider the two copies $M_n, \overline M_n$ of $M_n$ in $\double M_n$ (so $\double M_n = M_n \cup \overline M_n$), for a subset $W \subset M_n$ we denote by $\overline W$ its image in $\overline M_n$, and we put
\[
H_n^1 = \left( M_n \setminus \left( \bigcup_e U_e^1 \right)\right) \cup \left( \bigcup_e \overline U_e^2 \right)
\]
which is just $M_n \cup \bigcup_e U_e^1$ with 1-handles attached (one for each edge), so it is a handlebody of genus $n + E + 1$. Similarly
\[
H_n^2 = \left( \overline M_n \setminus \left( \bigcup_e U_e^2 \right) \right) \cup \left( \bigcup_e \overline U_e^1 \right)
\]
is a handlebody of the same genus. Now $\double M_n = H_n^1 \cup H_n^2$ and this proves \eqref{Nathan_trick}. 

%Its boundary is $\partial M_n$ with $E$ additional 2-handles attached to it, so the genus is $g(\partial M_n) + E$. 

For the lower bound we observe that the long exact sequence associated with $\partial M_n \to M_n \cup M_n \to \double M_n$ reduces to 
\[
0 \to H_2 (M_n) \oplus H_2(M_n) \to H_2(\double M_n) \to H_1(\partial M_n) \xrightarrow{i} H_1 (M_n) \oplus H_1(M_n) \to H_1(\double M_n) \to 0. 
\]
Since $b_2(\double M_n) = b_1(\double M_n)$ (Poincar\'e duality) and by our results on Betti numbers and genus of the boundary $2b_1(M_n) = b_1(\partial M_n)$ up to super-logarithmic error, it follows that
\[
b_1(\double M_n) = \mathrm{rank}(i) + \theta_0(n)
\]
with $\theta_0(n)$ super-logarithmic. As $i$ is diagonal embedding we have $\mathrm{rank}(i) \le b_1(M_n) = n$ up to super-logarithmic error, so we can conclude that $b_1(\double M_n) \ge n - \theta_1(n)$ with $\theta_1$ super-logarithmic. On the other hand $g(\double M_n) \ge b_1(\double M_n)$ and this proves the lower bound.

\section{Geometry}\label{sec_geom}

In this section we combine the combinatorial results from the previous section with hyperbolic geometry. Recall that $M_n$ denotes the compact manifold with boundary associated to $\calT_n$. Our main goal is to prove:

\begin{thm}[Geometry]
We have
\[ \lim_{n\to +\infty} \PP[M_n \text{ carries a hyperbolic metric with totally geodesic boundary}] = 1.\]
This metric has the following properties:
\begin{itemize}
\item[(a)]  The hyperbolic volume $\vol(M_n)$ of $M_n$ satisfies:
\[ \vol(M_n) \sim n\cdot v_O \quad \text{as } n\to\infty\]
in probability.
\item[(b)] There exists a constant $c_\lambda>0$ so that the first discrete Laplacian eigenvalue $\lambda_1(M_n)$ of $M_n$ satisfies
\[ \lim_{n\to +\infty} \PP[\lambda_1(M_n) > c_\lambda] = 1. \]
\item[(c)] There exists a constant $c_d>0$ such that the diameter $\diam(M_n)$ of $M_n$ satisfies:
\[
\lim_{n\to +\infty}\PP[\diam(M_n) < c_d \log(\vol(M_n))]  = 1
\]
\item[(d)] There exists a constant $c_s>0$ such that the systole  $\sys(M_n)$ of $M_n$ satisfies:
\[
\lim_{n\to +\infty}\PP[\sys(M_n) > c_s ]  = 1
\]
\item[(e)] For every $\eps>0$,
\[
\lim_{n\to +\infty}\PP\left[\sys(\double M_n) <  \frac{1}{n^{1/4-\eps}} \right]  = 1.
\]
The same holds for the minimal length among arcs in $M_n$ that are homotopically non-trivial relative to $\partial M_n$.
\end{itemize}
\end{thm}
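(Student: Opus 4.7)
The plan is to find, with high probability, a short orthogeodesic arc in $M_n$ with endpoints on $\partial M_n$. This establishes the second assertion directly, and doubling such an arc of length $\ell$ produces a closed geodesic of length $2\ell$ in $\double M_n$, which gives the first. Note that this strategy is forced on us by part (d): because $\sys(M_n)$ is uniformly bounded below a.a.s., short closed geodesics in $\double M_n$ cannot come from curves lying entirely inside one copy of $M_n$, and must cross $\partial M_n$.

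The candidate arcs arise from interior edges of $\calT_n$ that are incident to many tetrahedra. Following the hyperbolisation strategy sketched in the introduction, realise $M_n$ as a Dehn filling of a non-compact hyperbolic manifold $O_n$ obtained by gluing right-angled ideal octahedra according to the pattern of $\calT_n$. An interior edge $e$ of $\calT_n$ incident to $k$ tetrahedra corresponds to a cusp of $O_n$ whose horospherical torus cross-section is tiled by $\Theta(k)$ unit squares, one contributed by each octahedron meeting $e$. After Dehn filling, this cusp becomes a Margulis-tube-like region, and the edge $e$ becomes a geodesic arc in $M_n$ between two points of $\partial M_n$ passing through this region.

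The length of this arc is controlled by the length $\ell(\alpha)$ of the Dehn filling slope $\alpha$ on the cusp torus via the Futer--Purcell--Schleimer estimates invoked in the hyperbolisation argument. On a torus tiled by $\Theta(k)$ unit squares one can extract (after verifying that the torus is not too thin in the relevant direction) a slope of length at least $\sim k^{1/2}$, and the resulting arc in $M_n$ should have length $\lesssim 1/\ell(\alpha) \sim k^{-1/2}$. The linear (rather than quadratic) dependence on $\ell(\alpha)$ comes from the fact that the arc threads across the filled tube, rather than wrapping around its short core curve; this is what produces the exponent $1/4$ rather than the $1/2$ one might naively expect from the core-length estimate $\sim 1/\ell(\alpha)^2$.

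To close the argument, combine this with the combinatorial input. The deterministic identity $\sum_{k\geq 1} k E_k = 6n$ together with Theorem~\ref{thm_edges}(a), which gives $\EE[E] = \tfrac{1}{2} \log n + O(1)$, shows via a first-moment argument on $\sum_{k \geq K} E_k$ that with probability tending to $1$ there exists an edge of $\calT_n$ with $k \geq n^{1/2 - \eps}$ incident tetrahedra, for any $\eps > 0$. Feeding this into the length bound yields an arc of length $\lesssim n^{-(1/4-\eps)}$, and doubling gives the systole bound. The main obstacle is to make the correspondence between combinatorial ``long edges'' and geometric ``long slopes'' quantitative: one must control the shape of the cusp torus (ruling out thin degenerations where the shortest slope is unexpectedly short, a configuration that should be suppressed by the first moment bounds of Theorem~\ref{thm_edges}), identify the filling slope imposed by the gluing combinatorics, and apply the FPS estimates precisely to the arc rather than only to the core curve.
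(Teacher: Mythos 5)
Your proposal addresses only item (e). The introductory hyperbolisation claim and items (a)--(d) are not touched at all; in the paper, hyperbolisation (Lemmas \ref{small_cusps} and \ref{large_cusps}) is the backbone from which all five geometric items are deduced, so skipping it leaves the bulk of the theorem unproved.

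On item (e) itself there is a genuine error in the geometric estimate. Your claim that the transversal arc has length $\lesssim 1/\ell(\alpha)$ while the core geodesic has length $\lesssim 1/\ell(\alpha)^2$, and that one must work with the arc to obtain the exponent $1/4$, is incorrect: the arc threading the filled tube from $\partial M_n$ to $\partial M_n$ \emph{is} the core arc, whose double is precisely the core geodesic of $\double M_n$, so both scale like $1/\ell(\alpha)^2$. There is no ``linear regime''. (If you applied the correct quadratic estimate to an edge of combinatorial length $k\gtrsim n^{1/2-\eps}$ you would in fact obtain an even shorter closed geodesic $\lesssim n^{-(1/2-\eps)}$, which certainly implies the stated $n^{-(1/4-\eps)}$ bound; the first-moment argument -- which is correct -- even produces an edge of length $\gtrsim n/\log n$. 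The paper explicitly acknowledges that its $n^{-1/4}$ bound is not sharp.) What actually produces the exponent $1/4$ in the paper is not a distinction between arcs and core curves but the way the aggregate Futer--Purcell--Schleimer slope length $L$ is controlled: after the Andreev step fills every cusp of combinatorial length $< n^{1/4}$, all remaining cusps have $L_j^2 \gtrsim n^{1/4}$ and there are only $O(\log n)$ of them, whence $1/L^2 = \sum_j 1/L_j^2 \lesssim \log(n)^2/n^{1/4}$. The paper then applies \cite[Cor.~6.13]{Futer_Purcell_Schleimer} directly to this aggregate $L$, already computed in the course of proving hyperbolicity, rather than isolating a single long cusp as you do. Your single-cusp route could in principle be made to work, but you would need to account for the fact that the filling is performed on $Z$ (small cusps already filled), where the cusp cross-sections are no longer tiled purely by unit squares, and to verify that FPS applies with the required global condition on \emph{all} remaining slopes -- neither of which is a matter of ruling out ``thin degenerations'', since the filling slope is combinatorially prescribed to be the longitude of the annular cusp.
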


We will prove this theorem in multiple steps. The first is hyperbolisation, which follows from Lemmas \ref{small_cusps} and \ref{large_cusps}). The asymptotic behaviour of the volume is then determined in Proposition \ref{volume_bounds} and the spectral gap is proven in Proposition \ref{compact_expansion}. We prove the bounds on the diameter and systole in Proposition \ref{prp_diam_sys}.

Finally, we will also prove that the Benjamini-Schramm limit of the sequence $(M_n)_n$ is the octatree.

\subsection{Random models for hyperbolic manifolds}

We will first describe two manifolds associated to an element $\omega\in\Omega_n$ in our probability space $\Omega_n$. The first is a cusped hyperbolic manifold $Y_n$ and the second is the manifold $M_n$ that we saw in the previous section, but now viewed as a Dehn filling of $Y_n$.

\begin{figure}
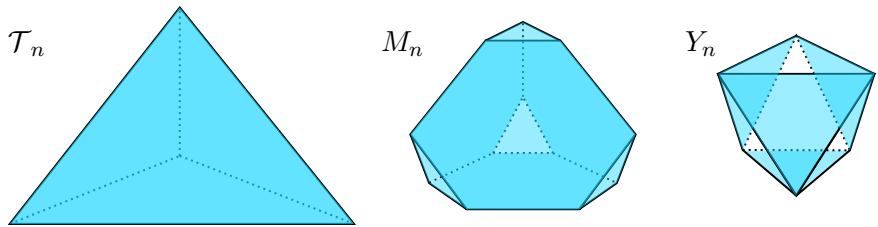

\begin{center}
\begin{overpic}{./Images/pic_tetoct}
\put (0,20){$\calT_n$} 
\put (43,20){$M_n$} 
\put (78,20){$Y_n$} 
\end{overpic}
\caption{The three building blocks for $\calT_n$, $M_n$ and $Y_n$ respectively. The faces that the gluing is performed along are shaded.}\label{pic_tetoct}
\end{center}
\end{figure}

Figure \ref{pic_tetoct} gives a topological picture of what is going on. We already associated a manifold $M_n$ to $\calT_n$ by truncating all the tetrahedra involved at their vertices. If we now contract the edges in the interior of this manifold and remove the resulting vertices, we obtain a new manifold $Y_n$ that is built out of a gluing of octahedra. The link of the octahedra's ideal vertices in this manifold are annuli, and we can fill them with cylinders to go back to the compact manifold. 

In what follows we describe this in some more detail.

\subsubsection{Manifolds with cusps and boundary}

Let $O$ be the ideal regular octahedron in $\HH^3$ (it can be realised as the convex hull of the vertices of a regular octahedron on the boundary at infinity $\mathbb S^2$). Its dihedral angles are right angles and its faces are ideal triangles. We orient each face with its outward normal. 

%\begin{lem} \label{octa_gluing}
%  Let $G$ be a 4-valent undirected graph with $n$ vertices, $O_1, \ldots, O_n$ copies of $O$ and fix an injective map from the directed edges of $G$ to the faces of the $O_i$ so that for any vertex the edges originating from it map to pairwise non-adjacent faces in a single $O_i$. For any choice of orientation-reversing isometries between all pair of faces associated with the same undirected edge gluing the $O_i$ yields a complete orientable hyperbolic manifold with totally geodesic boundary.  
%\end{lem}
%
%\begin{proof}
%  
%\end{proof}

We take $n$ copies of $O$ which we label as follows: for each copy we attribute a label in $\{1,\ldots,4n\}$ to four of its faces so that no two of them are adjacent (and we ask that the labeling map be injective). Each of the unlabeled faces is then determined by the labels of the three faces adjacent to it and since it is orianted we can identify it with a 3-cycle on their labels. 

This setting is similar to that of \ref{delta-complex} and we can perform the same random construction from it: we partition the non-labeled faces uniformly randomly into pairs, and we glue the two faces in a pair in a uniformly randomly choses orientation-reversing way. 

The resulting octahedral complex is a non-compact manifold with boundary, and by endowing each $O$ with its hyperbolic structure we obtain that the result is a complete orientable hyperbolic manifold with totally geodesic boundary. We denote by $X_n$ the random hyperbolic manifold with boundary we constructed. We will also consider $Y_n$ where we condition on there not being any loops or bigons in the graph dual to the tesselation by octahedra. We record the hyperbolic structure in a lemma in order to be able to refer to it later on.

\begin{lem} \label{octa_gluing}
  The manifolds $X_n$ and $Y_n$ carry complete hyperbolic metrics of finite volume with totally geodesic boundary.
\end{lem}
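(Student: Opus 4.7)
The plan is to verify that the hyperbolic structure on the regular ideal right-angled octahedron $O$ extends across the random face identifications to give a global complete hyperbolic metric on $X_n$ (and then $Y_n$) with totally geodesic boundary. Each $O$ is a hyperbolic polyhedron whose faces are totally geodesic ideal triangles, and each face identification is an orientation-reversing hyperbolic isometry between two such triangles, so the hyperbolic metric extends smoothly across the interior of each glued face. The only nontrivial compatibility conditions occur along the edges of the resulting complex and along its cusp cross-sections, and both will be verified by pure combinatorial inspection of the octahedral structure.

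The key observation for the edges is that the four labeled faces of $O$, being mutually non-adjacent, form one color class of the canonical $2$-coloring of $\partial O$, so that every edge of $O$ lies between exactly one labeled and one unlabeled face. Tracing the class of such an edge through the gluing (i.e., crossing into the next octahedron through its unlabeled face) closes after exactly one step, because the image edge again has only one unlabeled face adjacent to it, namely the one we just crossed through. Hence every edge class consists of exactly two octahedron-edges, with total dihedral angle $2\cdot\pi/2=\pi$. This is exactly the angle at a boundary edge of a hyperbolic $3$-manifold with totally geodesic boundary, and the two labeled faces meeting along such an edge from the two sides fit together into a single totally geodesic ideal hyperbolic surface across it. In particular this settles the ``totally geodesic boundary'' part of the conclusion.

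The check at the ideal vertices is similar in spirit. At each such vertex of $O$ four faces meet (two labeled and two unlabeled, alternating cyclically), so the vertex link is a Euclidean square whose sides alternate between labeled (which remain on the boundary of the cusp cross-section) and unlabeled (which get glued to other such sides). The four corners of this square correspond to the four octahedron-edges meeting the vertex, and by the edge analysis above these corners are identified two at a time into vertices of the resulting link surface, with total angle $\pi/2+\pi/2=\pi$. Each such identified corner lies between a labeled and an unlabeled side of the link, hence on the boundary of the link, and the angle $\pi$ is precisely the angle at a boundary vertex of a flat surface with geodesic boundary. Thus each cusp cross-section is a Euclidean surface with geodesic boundary, so the ends are proper hyperbolic cusps and the constructed metric is complete. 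Finite volume, with $\vol(X_n)=n\cdot v_O$, is then immediate from $\vol(O)=v_O<\infty$.

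The main subtle point is completeness: unlike the edge-angle condition, which is purely local, the existence of a flat structure on each cusp cross-section is a global condition on the vertex links, and it typically fails for arbitrary gluings of ideal polyhedra. Here it is made automatic by the alternation enforced by the labeling rule, so the lemma is really a structural statement about the choice of gluing pattern rather than about the randomness. The statement for $Y_n$ then follows without further work, since conditioning on the absence of loops and bigons in the dual graph does not alter the local hyperbolic structure on any particular outcome of the construction.
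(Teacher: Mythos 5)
The paper gives no proof of this lemma: it is recorded as a bookkeeping statement immediately after the text asserts that ``by endowing each $O$ with its hyperbolic structure we obtain that the result is a complete orientable hyperbolic manifold with totally geodesic boundary.'' Your proposal is therefore a genuine expansion, and most of it is sound: the edge analysis (every edge of $O$ lies between one labelled and one unlabelled face, so each edge class has size two with total dihedral angle $\pi$) and the resulting smoothness of the totally geodesic boundary are correct.

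There is, however, a gap in the completeness step. Knowing that each cusp-link vertex has angle $\pi$ (and that there are no interior vertices) only gives the cusp cross-section a Euclidean \emph{similarity} structure with geodesic boundary; it does not by itself force the holonomy around the annulus to be an isometry rather than a scaling, and a similarity annulus with all boundary angles equal to $\pi$ can perfectly well be incomplete (e.g.\ the quotient of a half-plane by $z\mapsto 2z$). You attribute the automaticity of completeness to ``the alternation enforced by the labeling rule,'' but alternation only controls the combinatorics (edge classes of size two, cusp links being annuli of squares). What actually makes the structure complete is the \emph{regularity} of $O$: with the symmetric horosphere packing, each vertex link of $O$ is a Euclidean square of a fixed size, and a hyperbolic isometry carrying one ideal-triangle face of $O$ onto another necessarily carries the induced horocyclic arc on the first to that on the second, because these arcs are exactly the canonical (pairwise tangent) horocircle arcs of the ideal triangle and hence are intrinsic to it. Consequently the face gluings act as Euclidean \emph{isometries} between the cusp-link squares, so the link of each cusp is a genuine flat annulus and the end is a proper cusp. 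Without this observation the argument as written does not establish completeness, which is the only nontrivial assertion in the lemma.
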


We denote by $\Theta_n$ the (finite) set of all hyperbolic manifolds obtained by gluing $n$ octagons in this fashion. Let $Y$ be in some $\Theta_n$. Each cusp $c$ of $Y$ is tesselated by squares. Let $\ell(c)$ be their number (we will also call this the {\em length} of $c$) and for $k \in \NN$ let $B_k(Y)$ be the number of cusps of $Y$ with $\ell = k$.

\begin{lem} \label{cusp-edge}
  The random variable $C_k := B_k(Y_n)$ has the same distribution as the variable $E_k$ introduced at the beginning of \ref{delta-complex}.
\end{lem}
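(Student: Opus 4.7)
The plan is to exhibit a measure-preserving bijection between the probability space underlying $Y_n$ and that underlying $\calT_n$ (with the analogous ``no loops, no bigons'' conditioning imposed on both) under which the random variables $C_k$ and $E_k$ become literally equal. The key combinatorial observation is that the ideal regular octahedron is the ``fully truncated'' tetrahedron: if one truncates each vertex of a tetrahedron by a plane passing through the midpoints of its three incident edges, the four original triangular faces shrink to smaller triangles and four new triangles appear at the truncations, producing an octahedron. Under this identification the four unlabeled (glued) faces of the octahedron correspond to the four faces of the tetrahedron, the four labeled (boundary) faces correspond to the four vertices of the tetrahedron, and the six vertices of the octahedron correspond to the six edges of the tetrahedron (located at their midpoints).

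First I would check that the two random constructions coincide through this dictionary. In both cases one chooses a uniformly random partition of the $4n$ ``gluing'' faces into $2n$ pairs and, for each pair, one of three cyclic orientation-reversing identifications uniformly at random. Hence there is a canonical bijection between the gluing data for $Y_n$ and for $\calT_n$ (equivalently, $M_n$); moreover the two dual $4$-regular graphs coincide, so the simplicity conditioning imposed to define $Y_n$ agrees with the one imposed to define $M_n$. Then I would translate cusps into edges: a cusp of $Y_n$ is an equivalence class of pairs (octahedron, ideal vertex of it) under the identifications induced by the face gluings, which via the dictionary matches exactly an equivalence class of pairs (tetrahedron, edge of it), i.e.\ an edge of $\calT_n$. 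The number of octahedral corners accumulated in such a class equals, by construction, the number of tetrahedral corners at the corresponding edge (both counted with multiplicity); and since the link of a vertex of a regular ideal octahedron is a square, this common integer is simultaneously the cusp length $\ell(c)$ and the edge length tallied by $E_k$.

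The only mild obstacle is the bookkeeping needed to check that the cyclic orientation-reversing identification chosen for each pair of octahedral faces really induces on the three ideal vertices of each face the same permutation as the corresponding tetrahedral gluing does on the three edges of the matching tetrahedral face. Once one notes that a cyclic order-reversing bijection between the three edges of a tetrahedral face is the same datum as a cyclic order-reversing bijection between the three vertices of the corresponding (shrunk) octahedral face, the orientation conventions match automatically. The identity $C_k = E_k$ on the shared probability space then follows, which is all that the lemma requires.
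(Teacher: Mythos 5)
Your proof is correct and fills in exactly the argument the paper leaves implicit (the lemma appears with no written proof; the relevant dictionary is set up in the surrounding text and in Figure~\ref{pic_tetoct}). The combinatorial observation you state---that the regular ideal octahedron is the fully truncated tetrahedron, so that its four glued faces correspond to the tetrahedron's faces, its four boundary faces to the tetrahedron's vertices, and its six ideal vertices to the tetrahedron's edges---is precisely the identification the paper uses to pass between $\calT_n$ and $Y_n$. You correctly observe that the random gluing data is canonically the same on both sides (a uniform perfect matching on $4n$ glued faces plus a uniform choice of one of three orientation-reversing identifications per pair, with matching conventions since the three edges of a tetrahedral face are the three ideal vertices of the corresponding octahedral face), and that under this bijection a cusp of $Y_n$ together with its square tessellation corresponds to an interior edge of $\calT_n$ together with the tetrahedra (counted with multiplicity) around it, so $C_k$ and $E_k$ are literally the same function of the gluing data.

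One small point worth flagging, more a matter of cleaning up the paper than a defect in your argument: the paper defines $E_k$ on the unconditioned space $\Omega_n$, while $Y_n$ is the octahedral model conditioned on the dual graph having no loops or bigons. You sensibly impose matching conditioning on both sides; the cleaner reading is that the bijection exists already at the unconditioned level (identifying the sample spaces of $X_n$ and $\calT_n$), and is compatible with the simplicity conditioning because the dual $4$-regular graphs coincide under the correspondence, which is what you note. So your proof actually covers both the statement as written and its literal (unconditioned) variant.
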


%%%%%%%%%%%%%%%%%%%%%%%%%%%%%%

\subsubsection{Compact manifolds}

Recall from Section \ref{delta-complex} that $N_n$ is a manifold with boundary obtained from randomly gluing truncated tetrahedra along their faces. Moreover, $M_n$ is a random manifold that has the distribution of $N_n$, conditioned on the dual graph $\calG_n$ not having any loops and multiple edges.

Let us now describe how $M_n$ (and $N_n$ respectively) can be obtained from $Y_n$ (and $X_n$ respectively) via Dehn filling.

Let $Y \in \Theta_n$. Its boundary $S$ is a hyperbolic surface with cusps. Moreover there is a pairing on the cusps where we associate two cusps of $S$ if they are asymptotic in $Y$. If we remove a horospherical neighbourhood of each cusp we obtain a compact manifold $\ovl Y$ whose boundary is made up of $S$ together with closed annuli linking paired cusps, and by the thick-thin decomposition $Y$ is homeomorphic to $\ovl Y$ minus the annuli. We can then perform surgery on $\ovl Y$ as follows:\footnote{Another way to describe it is that it is the restriction to $Y$ of the unique Dehn surgery on the double $\mathcal DY$ which is equivariant with respect to the reflection in $\pl Y$. } to each annulus we glue a cylinder $D \times [0, 1]$ along the boundary $[0, 1] \times \pl D$. We obtain a compact manifold $M$ with boundary $\ovl S$. We denote by $\Xi_n$ the set of such manifolds $M$ obtained from $Y \in \Theta_n$. 

Note that $M_n$ (and $N_n$ respectively) has the same distribution as the Dehn filling of $Y_n$ (and $X_n$ respectively) described above. We again record this in a proposition to be able to refer to it later:

\begin{prop} \label{same_models}
  The variables $ M_n$ (respectively $N_n$) and the Dehn filling of $Y_n$ (respectively $X_n$) described above have the same distribution. 
\end{prop}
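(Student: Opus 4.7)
The plan is to realise both random models on the same probability space via a combinatorial bijection and then to verify that, for each configuration, the resulting manifolds are homeomorphic.

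\emph{The combinatorial bijection.} Both models are driven by the same abstract data: $n$ labeled building blocks, each carrying 4 labels drawn injectively from $\{1,\ldots,4n\}$ and 4 ``gluing faces'' identified with oriented 3-cycles on the labels. For a tetrahedron the labels sit on vertices and the gluing faces are its four faces. For an octahedron the labels sit on the chosen independent family of four faces, and each of the four unlabeled faces is identified with the oriented 3-cycle formed by its three adjacent labeled faces, as spelled out before Lemma~\ref{octa_gluing}. The random choices driving both constructions---a uniform partition of the $4n$ gluing faces into $2n$ pairs, together with, for each pair, a uniformly chosen orientation-reversing simplicial identification---are then literally the same, and both probability spaces have cardinality $(4n)!!\cdot 3^{2n}$ with the uniform measure.

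\emph{Matching the topology.} Given a configuration $\omega$, write $N(\omega)$ for the truncated-tetrahedral manifold and $F(\omega)$ for the Dehn filling of the octahedral manifold. I claim $N(\omega)\cong F(\omega)$ for every $\omega$, by a local argument. Inside each ideal octahedron $O$ inscribe a truncated tetrahedron $T_O$ whose four hexagonal faces sit on the four unlabeled faces of $O$, whose four triangular faces sit on the four labeled faces, and whose six hex--hex edges each run to one of the six ideal vertices of $O$. The complement $O\setminus T_O$ splits as six cusp neighborhoods (one per ideal vertex of $O$) together with four collars sitting between the triangular faces of $T_O$ and the labeled faces of $O$. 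After gluing, the collars assemble into the totally geodesic boundary of the octahedral manifold, the cusp neighborhoods assemble into neighborhoods of its cusps, and the inscribed truncated tetrahedra glue in exactly the same way as in the tetrahedral model (via the bijection of the previous paragraph), producing a copy of $N(\omega)$. By Lemma~\ref{cusp-edge} the cusps of the octahedral manifold correspond bijectively to the interior edges of the tetrahedral complex, and the prescribed Dehn filling replaces each cusp neighborhood by a solid-cylinder neighborhood of the corresponding edge, giving $F(\omega)\cong N(\omega)$.

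\emph{Conditioning and conclusion.} The conditioning that the dual graph has no loops and no multi-edges is phrased purely in terms of the gluing pairing, so it transports along the bijection; the conditional laws defining $M_n$ from $N_n$ and $Y_n$ from $X_n$ therefore match. Together with the configuration-wise homeomorphism, this yields the equality of distributions. The main obstacle is the Dehn-filling slope computation: one must verify that the curve on each boundary annulus that bounds a disc in the glued solid cylinder is exactly the one that contracts onto the interior edge in $N(\omega)$, rather than some other slope. This is where the prescription that the filling be the restriction to $Y$ of the unique $\partial Y$--symmetric Dehn surgery on the double $\double Y$ (see the footnote in the definition of the filling) plays its role; once this slope has been identified with the contracting curve at the corresponding edge, the remainder of the argument is a routine combinatorial check.
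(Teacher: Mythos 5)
Your proof matches the paper's approach, which is only sketched there (the proposition is preceded by an informal ``Note that\ldots{}'' together with Figure~\ref{pic_tetoct} and the opening paragraph of Section~\ref{sec_geom}, encoding exactly the inscribed-truncated-tetrahedron picture and the combinatorial bijection you spell out), and your configuration-wise homeomorphism makes that sketch precise. The ``main obstacle'' you flag at the end is in fact not one: the solid cylinder $D\times[0,1]$ is attached along an \emph{annulus} rather than a torus, so its meridian is forced to lie on the core slope of the annulus and there is no slope to compute---the paper's footnote about the equivariant surgery on $\double Y$ is offered only as an alternative description, not to resolve an ambiguity.
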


%
%We will use the following random variables with values in $\Xi_n$: let $N_n$ (respectively $\widetilde X_n$) be the random variable obtained from $Y_n$ (respectively $\widetilde Y_n$) by this process. 
%  
%Another way to construct the manifold $X$ from $Y$ is to replace each ideal vertex in each $O_i$ with an edge between two unlabeled faces. This manifold is the same as the obtained by gluing truncated tetrahedra (see \ref{pic_truncatedtet}) according to the same pattern as that used for $Y$. The latter process is exactly that describing the random manifold $\wdt M_n$ introduced in \ref{delta-complex} and so we have the following result. 
%
%\begin{prop} \label{same_models}
%  The variables $ M_n$ (respectively $\widetilde M_n$) and $X_n$ (respectively $\widetilde X_n$) have the same distribution. 
%\end{prop}

%%%%%%%%%%%%%%%%%%%%%%%%%%%%%%%%%%%%%%%%%%%%%%%%%%%%%%%%%%%%

\subsection{Bounds for Dehn surgeries and hyperbolicity}

In this section we prove that $M_n$ is hyperbolic with asymptotic probability 1 (this is part of the statement of Lemma \ref{large_cusps}) and we give precise bounds for the variation in geometry between $Y$ and $M$.

Our proof goes in two steps. First we use Andreev's theorem to control what happens when ``small'' cusps of $Y$ are filled and after this we use recent results by Futer--Purcell--Schleimer to control the change in geometry when the ``large'' cusps  are filled.

\subsubsection{Andreev's Theorem} \label{andreev}

To construct explicit hyperbolisations we will need Andreev's theorem describing acute-angles polyhedra in $\HH^3$. We refer to \cite{Roeder_Hubbard} for a proof of this result. Before giving the statement we recall that given a combinatorial 2-polyhedron $P$, with dual graph\footnote{Recall that this is the graph on 2-dimensional faces of $P$ with an edge between two faces for each edge they share. } $H$, a circuit in $H$ is said to be {\em prismatic} if for any edge in the circuit, the endpoints of the corresponding edge of $P$ are distinct. The following is a combination of Theorem 1.4 and Proposition 1.5 in \cite{Roeder_Hubbard}:

\begin{thm}[Andreev's Theorem] \label{thm_andreev}
  Let $P$ be an abstract polyhedron with at least six faces, and $\alpha$ a function from edges of $P$ to $(0,\pi/2]$. Then there exists a realisation of $P$ in $\HH^3$ which is of finite volume and whose dihedral angles are given by $\alpha$ if and only if the following conditions are satisfied.
  \begin{itemize}
  \item For any three edges $e_1, e_2, e_3$ meeting in one vertex we have $\sum \alpha(e_i) \ge \pi$ (equility ocurring if and only if the vertex is ideal).

  \item If $(e_1, \ldots, e_k)$ is a prismatic $k$-circuit with $k = 3, 4$ then $\sum \alpha(e_i) < (k-2)\pi$.

  \end{itemize}
\end{thm}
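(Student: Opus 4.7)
The plan is to prove Andreev's Theorem by the continuity (deformation) method of Andreev, as worked out in \cite{Roeder_Hubbard}, viewing the space of realisations of $P$ as a finite-dimensional real-analytic variety and parametrising it by dihedral angles.

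For necessity, suppose $P\subset\HH^3$ is a realisation with the prescribed angles. At a finite vertex $v$ the link is a small spherical polygon whose interior angles are exactly the dihedral angles at the edges incident at $v$; a spherical triangle has angle sum strictly greater than $\pi$, giving the first inequality (with equality in the limit where $v$ becomes ideal and the link degenerates to a Euclidean triangle). For a prismatic $k$-circuit ($k=3,4$), the combinatorics furnishes a transversal which, after a straightening/comparison argument, becomes a geodesic hyperbolic $k$-gon whose interior angles are the $\alpha(e_i)$; Gauss--Bonnet in $\HH^2$ then yields $\sum\alpha(e_i)<(k-2)\pi$.

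For sufficiency, introduce the finite-dimensional spaces $\mathcal{P}_P$ of realisations of $P$ modulo $\Isom(\HH^3)$ and $\mathcal{A}_P$ of angle assignments satisfying Andreev's conditions, together with the dihedral-angle map $\Phi:\mathcal{P}_P\to(0,\pi/2]^{E(P)}$. The goal is to show $\Phi(\mathcal{P}_P)=\mathcal{A}_P$. First check that $\mathcal{P}_P$ and $\mathcal{A}_P$ have the same dimension (by Euler's formula and counting the linear relations imposed at ideal vertices), then establish local rigidity of $\Phi$ by a Cauchy-type argument adapted to acute-angled hyperbolic polyhedra; together with dimension matching this shows $\Phi$ is a local homeomorphism onto its image. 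Next show $\Phi$ is proper into $\mathcal{A}_P$: if $\Phi(P_j)\to\alpha\in\mathcal{A}_P$, then after normalising by $\Isom(\HH^3)$ a subsequence converges to a finite-volume realisation with angles $\alpha$. Finally, $\mathcal{A}_P$ is convex, hence connected, so starting from an explicit base realisation (e.g.\ the right-angled one where admissible, or a known hyperbolisation of a nearby combinatorial type) a clopen argument on the connected target forces $\Phi$ to be surjective.

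The main obstacle is the properness step: one must enumerate possible degenerations of a sequence of realisations (an edge collapsing, two non-adjacent faces colliding, a face degenerating, escape to infinity in the Gromov--Hausdorff sense) and rule out each by exhibiting a vertex or prismatic circuit whose Andreev inequality would be saturated in the limit, contradicting $\alpha\in\mathcal{A}_P$. This is precisely where the acute-angle hypothesis $\alpha\le\pi/2$ and the strict inequalities in Andreev's conditions enter essentially, as they preclude exactly the pathological collisions that would otherwise obstruct compactness of the deformation space.
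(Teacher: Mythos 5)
The paper does not give a proof of Andreev's Theorem: it states it as a known result and explicitly defers to \cite{Roeder_Hubbard}, remarking that the statement is ``a combination of Theorem 1.4 and Proposition 1.5 in \cite{Roeder_Hubbard}.'' There is therefore no in-paper argument to compare yours against. Your sketch of the continuity method (necessity via spherical/hyperbolic angle-sum and Gauss--Bonnet on prismatic circuits; sufficiency via the angle map $\Phi:\mathcal P_P\to\mathcal A_P$, local rigidity, properness, and connectedness of $\mathcal A_P$) is a reasonable high-level outline of the strategy used in that reference, and the emphasis you place on the properness step being the crux is accurate. As written, though, it is a roadmap rather than a proof: the local rigidity claim, the dimension count of $\mathcal P_P$ at ideal vertices, and the case analysis of degenerations are each substantial chapters of \cite{Roeder_Hubbard}, and your text asserts rather than establishes them. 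If your intent was to supply an independent proof, these are the gaps. If your intent was to explain what the cited theorem says and how it is proved, this is adequate, but you should be aware that the paper under review simply cites the result and would not want a proof inserted here.
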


\subsubsection{Filling small cusps}

We start by filling the small cusps of $Y_n$. These will be cusps of length $\leq n^{1/4}$. Note that the resuling manifold, that we call $Z_n$ is rigid -- i.e. if we can find a complete hyperbolic metric of finite volume on it, it's unique up to isometry -- by Mostow--Prasad rigidity of its double.

\begin{lem} \label{small_cusps}
  There exists $J_0 > 0$ such that the following holds for any Margulis constant $\delta > 0$ and any $\eps > 0$. For any $Y \in \Theta_n$,
  \begin{itemize}
  \item let $c_1, \ldots, c_m$ be the cusps of $Y$ of length at most $n^{1/4}$,
  \item let $Z$ be the manifold obtained by filling $c_1, \ldots, c_m$,
  \item let $Z_1$ be the union of all octahedra of $Y$ containing one of the $c_i$ and $Z_2$ its complement.
\end{itemize}
Then with probability at least $1-\eps$ in the model $Y_n$ for $n$ large enough, we have that: 
  \begin{itemize}
  \item The $\delta$-thick part of the image\footnote{We view $Y$ as the complement of the core arcs in $Z$.} of $Z_1$ in $Z$ is $J_0$-bilipshitz to that of $Z_1$; 
  \item The image of $Z_2$ in $Z_\sigma$ is isometric to $Z_2$. 
  \end{itemize}
\end{lem}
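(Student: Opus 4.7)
My plan is to build the hyperbolic structure on $Z$ explicitly: for each small cusp $c_i$ of $Y$, I would replace the union $\mathcal O(c_i)$ of the $\ell(c_i)$ octahedra incident to $c_i$ by a right-angled finite-volume hyperbolic polyhedron produced by Andreev's Theorem~\ref{thm_andreev}, and glue it back in place of $\mathcal O(c_i)$. The isometric statement for $Z_2$ is then automatic, since the modification is confined to $Z_1$. The bilipschitz statement will follow from a compactness argument combined with the observation that the comparison map can be taken to be the identity outside a region of controlled size.

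\emph{Isolation of small cusps.} First I would apply Theorem~\ref{thm_edges}(d) with $K = L = n^{1/4}$ (which is $o(n^{1/3})$) together with Markov's inequality: with probability at least $1 - \eps/2$ for $n$ large, no two small cusps of $Y_n$ are incident to a common octahedron. On this event the neighborhoods $\mathcal O(c_i)$ are pairwise disjoint; each is combinatorially a ``wheel'' of $k_i = \ell(c_i)$ right-angled ideal octahedra meeting around a common ideal edge, and the fillings may be performed independently cusp by cusp.

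\emph{Andreev filling.} For a single cusp $c$ of length $k \ge 2$, let $P_k$ denote the combinatorial polyhedron obtained from $\mathcal O(c)$ by contracting the cusp edge to a finite vertex $v_\ast$. Assign dihedral angle $\pi/2$ to every edge. At $v_\ast$ the angle sum $2k \cdot \pi/2 = k\pi$ strictly exceeds $\pi$ for $k \ge 2$ (and $k = 1$ is excluded by the no-loop condition in $Y_n$); the remaining vertex conditions are inherited from the ideal right-angled octahedra. The no-loop and no-bigon conditions rule out prismatic $2$-circuits through $v_\ast$, and I would rule out prismatic $3$- and $4$-circuits through $v_\ast$ a.a.s.\ by a Markov estimate analogous to Theorem~\ref{thm_edges}(d), at the cost of losing a further $\eps/2$ in probability. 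Granting this, Andreev's Theorem yields a right-angled finite-volume hyperbolic realisation of $P_k$ whose boundary and gluing faces are ideal right-angled triangles matching those of $\mathcal O(c)$, so we may glue it back to form the hyperbolic metric on $Z$.

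\emph{Bilipschitz bound on the $\delta$-thick part.} The main obstacle is to make the constant $J_0$ uniform in $k$, $\delta$ and $n$. Geometrically, $P_k$ differs from $\mathcal O(c)$ only in that the ideal cusp edge has become a closed geodesic loop (the core of the filling), surrounded by a Margulis tube; outside this tube on one side and the cusp horoball on the other there is a natural combinatorial identification that is isometric deep inside the octahedra (the octahedral structure is preserved away from the cusp edge) and merely bilipschitz of uniformly bounded distortion in a collar region. By the standard model for cusps and Margulis tubes in hyperbolic $3$-manifolds, this collar comparison admits a universal bilipschitz bound independent of $k$, and Gromov--Hausdorff compactness of the space of pointed right-angled hyperbolic polyhedra with a uniform injectivity-radius lower bound on the $\delta$-thick part absorbs the finitely many small values of $k$ that might need individual treatment. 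The most delicate point is the uniformity in $\delta$: this reduces to showing that the identification map can be chosen to be the identity outside a neighborhood of $c$ whose width is controlled only by the combinatorial data, so that no matter how small $\delta$ is, the $\delta$-thick part only sees bounded-distortion portions of the comparison.
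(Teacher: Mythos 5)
Your plan breaks down at the Andreev step. You propose to fill a length-$k$ cusp $c$ by applying Andreev's Theorem once to the whole union $\mathcal O(c)$ of octahedra, after collapsing $c$ to a single finite vertex $v_\ast$ with dihedral angle $\pi/2$ on every incident edge. But $v_\ast$ would have $2k \ge 4$ edges, and a finite vertex of an acute-angled hyperbolic polyhedron must be trivalent: its spherical link is a polygon with all angles $\le \pi/2$, hence a triangle by spherical Gauss--Bonnet. So the abstract polyhedron you want to realise is not realisable, and Andreev's Theorem (which implicitly assumes trivalence at finite vertices -- the angle-sum condition is about \emph{triples} of concurrent edges, not totals over all of them) simply does not apply. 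The geometry is also off: Dehn filling an annular cusp inserts a core \emph{arc}, not a point, and what must be engineered is a $2\pi$ cone angle along that arc.

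The paper's construction avoids this by staying per-octahedron. Each of the $l$ octahedra around a length-$l$ cusp is replaced by a polyhedron $P_l$, obtained from $O$ by expanding the relevant ideal vertex into a new \emph{edge} of dihedral angle $2\pi/l$; gluing the $l$ copies cyclically around this edge gives total angle $2\pi$, i.e.\ a smooth hyperbolic structure with a core geodesic arc, while the outward faces are unchanged so $Z_2$ glues back isometrically. This decomposition also makes the Andreev hypotheses hold: $2\pi/l \le \pi/2$ requires $l \ge 4$ (the paper handles $l=3$ by a separate doubling trick because $2\pi/3$ is obtuse; $l \le 2$ is excluded by simplicity of the dual graph -- you only mention ruling out $l=1$). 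Finally, the uniform bilipschitz bound for the $\delta$-thick parts is obtained in the paper from the Gromov--Hausdorff convergence $P_l \to O$, which yields uniformity in $l$ directly; your ``universal collar comparison plus compactness for small $k$'' is plausible in spirit but is less precise, and in any case cannot be assessed until the filling construction itself is repaired.
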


\begin{proof}
  This is the (only) part of our proof of hyperbolisation that will use the assumption that the dual graph $\calG_n$ is simple.

  Let $O_1, \ldots, O_n$ be the octahedra tesselating $Y$ and $O_i$,  $1 \le i \le k$ those containing a cusp $c_i$ with $\ell(c_i) \le n^{1/4}$ and $O_{k+1}, \ldots, O_n$ the remaining ones. By Theorem \ref{thm_edges}\ref{adjacent_edges} we may assume that each $O_i$ contains exactly one such cusp. We have $Z_1 = O_1 \cup \cdots \cup O_k$ and $Z_2 = O_{k+1} \cup \cdots \cup O_n$. Then the part of boundary of $Z_1$ and $Z_2$ along which they are glued is a disjoint union of ideal regular squares.

  To construct the hyperbolic structure on the filled manifold we replace $O_1, \ldots, O_k$ by polyhedra constructed as follows. First we assume that $\ell(c_i) \ge 4$. Consider the following polyhedron, which is an octahedron on which 1 vertex has been replaced by an edge (marked red in the picture): 
  \begin{center}
    \includegraphics[width=.3\textwidth]{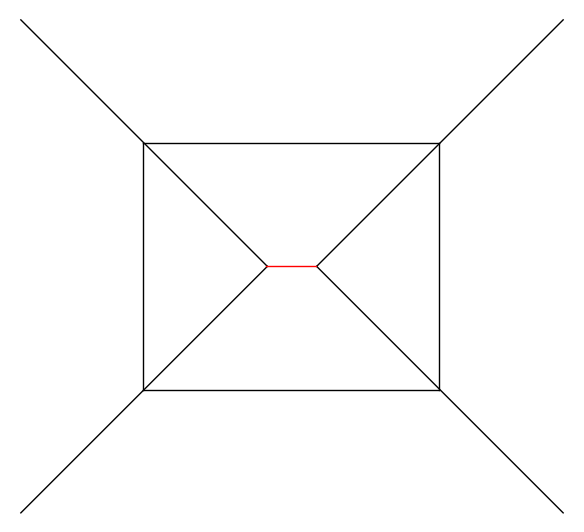}
  \end{center}
  There are no prismatic 3- or 4-circuits in the dual graph so it follows from Andreev's Theorem (Theorem \ref{thm_andreev}) that for $l \ge 4$, this has the structure of an hyperbolic polyhedron $P_l$ with right angles at all edges except the red one which has angle $2\pi/l$ (we need $l \ge 4$ for this not to be obtuse).

  If $\ell(c_i) = 3$ then we can still construct $P_3$ as follows: the combinatorial polyhedron has a symmetry along the red edge, which decomposes it as the double of the following polyhedron along the blue-colored face : 
  \begin{center}
    \includegraphics[width=.3\textwidth]{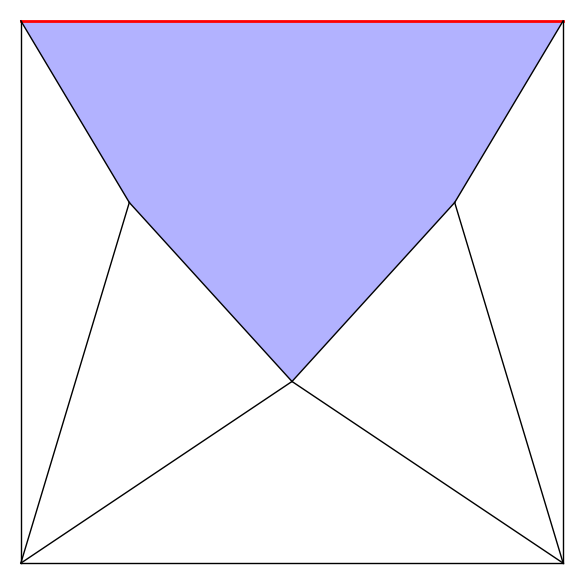}
  \end{center}
  and the latter has no prismatic circuits, so it admits a hyperbolic structure with right angles on the black edges and $\pi/3$ on the red edge by Andreev's theorem.

  \medskip

  For $l \ge 4$ let $Q_l$ be the hyperbolic manifold obtained by gluing $l$ copies of $O$ in a circular pattern, along disjoint faces sharing an ideal vertex. The faces opposite to the glued faces form a union of disjoint ideal triangles in $Q_l$. Dehn surgery on $Q_l$ amounts to replacing each copy of $O$ by a copy of the polyhedron $P_l$ (the edge with angle $2\pi/l$ replacing the ideal vertex on which surgery is done), to obtain a polyhedron $Q_l'$ whose boundary is two $l$-gons, $l$ regular ideal squares and $2l$ ideal triangles meeting at right angles. This is illustrated in the following figure, where these are colored blue and the central edge red : 
  \begin{center}
    \includegraphics[width=.5\textwidth]{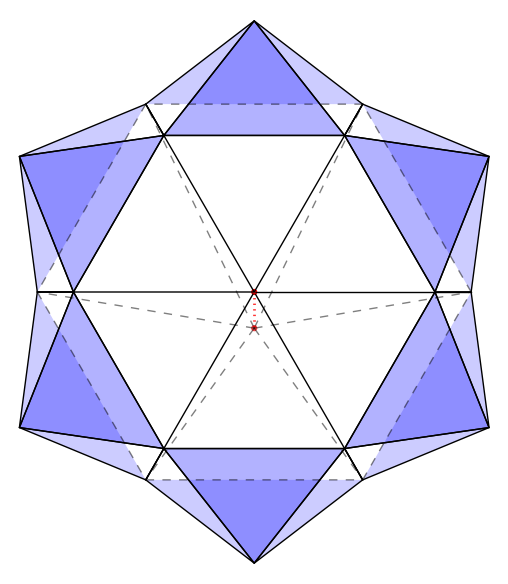}
  \end{center}
  Now if $M$ is generic in the sense of \ref{thm_edges}\ref{adjacent_edges} and furthermore all edges of length at most $K(n)$ (where $K(n)$ is any $o(n^{1/4})$) are simple\footnote{We could also have realised the surgery by explicit polygons for non-simple edges but we found this argument to be simpler.} (which is generic by \ref{thm_edges}\ref{num_simple_edges_fixed_length},\ref{num_edges_fixed_length}) then $Z_1$ is a disjoint union of $Q_l$s and filling the small cusps amounts to replacing each of these with a $Q_l'$. In particular we can glue the rest of the octahedra in the pattern given by $\mathcal G$ to obtain $Z$. This proves that the image of $Z_2$ in $Z$ is isometric to $Z_2$. Since $P_l$ converges in Gromov--Hausdorff topology (pointed anywhere in the thick part) to $O$ the $\delta$-thick parts of $P_l$ are uniformly (independently of $l$) bilipschitz to the $\delta$-thick part of $O$, and it follows immediately that $Z_1$ is uniformly (independently of generic $\mathcal G$) bilipschitz to its image in $Z$. 
\end{proof}

%%%%%%%%%%%%%%%%%%%%%%%%%%%%%%

\subsubsection{Filling large cusps} 

In order to fill the cusps remaining in $Z$, we rely on results of Futer--Purcell--Schleimer. Again using $Z$ to denote the manifold obtained from $Y$ by filling its msall cusps, we will think of $Y\subset Z \subset M$. We have:

\begin{lem} \label{large_cusps}
  Let $Y, Z$ be as in Lemma \ref{small_cusps}. Then, for any $\delta > 0$ (smaller than the Margulis constant for $\HH^3$) and any $\eta > 0$, with probability for the $Y_n$ model converging to 1 as $n \to +\infty$ the following holds:
\begin{itemize}
 \item There exists Riemannian metrics $g_0, g_{2\pi}$ on $M$ such that $(Y, g_0)$ is the complete hyperbolic structure and the completion of $(M, g_{2\pi})$ is a compact hyperbolic manifold with totally geodesic boundary which is diffeomorphic to the Dehn filling of $Y$. 

\item Moreover $Z_{\ge 6\delta/5} \subset M_{\ge \delta} \subset Z_{\ge 5\delta/6}$ and these inclusions are $(1+\eta)$-lipschitz. 
\end{itemize}
\end{lem}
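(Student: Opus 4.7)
The plan is to realise $\double M$ as a $\tau$-equivariant Dehn filling of the double $\double Z$ and apply the effective bilipschitz Dehn filling theorem of Futer--Purcell--Schleimer \cite{Futer_Purcell_Schleimer}. Here $\double Z$ is a complete finite-volume hyperbolic 3-manifold carrying an isometric involution $\tau$ whose fixed locus is $\partial Z$; since each cusp of $Z$ has annular cross-section, the cusps of $\double Z$ are tori, and the Dehn filling of Proposition~\ref{same_models} extends $\tau$-equivariantly to $\double Z$, with filling slope on each torus equal to the core curve of the corresponding annular cusp of $Z$.

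The key hypothesis to verify is that every filling slope has large normalised length. On a cusp $c$ of $Z$ of combinatorial length $\ell(c)$, the annular cross-section is tessellated by $\ell(c)$ flat Euclidean squares of fixed side $s$; the corresponding maximal horospherical torus of $\double Z$ therefore has area $\asymp \ell(c)$, and the filling slope (which runs once around the cycle of squares) has geometric length $\asymp \ell(c)$. Its normalised slope length is consequently $\asymp \sqrt{\ell(c)}$. By Lemma~\ref{small_cusps}, every cusp of $Z$ satisfies $\ell(c) \ge n^{1/4}$, so the minimum normalised slope length is $\gtrsim n^{1/8}$, which exceeds any fixed threshold for $n$ large.

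With this hypothesis in hand, the Futer--Purcell--Schleimer theorem produces a hyperbolic metric on $\double M$ together with a one-parameter family of cone-manifold Riemannian metrics interpolating between the complete cusped metric on $\double Z$ and the filled metric on $\double M$. Restricting to $M \subset \double M$ yields the metrics $g_0$ (from the initial complete structure on $Y$) and $g_{2\pi}$ (the smooth filled metric), with the completion of $(M, g_{2\pi})$ hyperbolic with totally geodesic boundary. Their bilipschitz estimate, applied with parameter $\eta$, gives the nested inclusions $(\double Z)_{\ge 6\delta/5} \subset (\double M)_{\ge \delta} \subset (\double Z)_{\ge 5\delta/6}$ with $(1+\eta)$-bilipschitz inclusion maps.

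To descend from $\double M$ to $M$ we invoke $\tau$-equivariance: since $\tau$ permutes the filling slopes, uniqueness of the Hodgson--Kerckhoff cone-manifold deformation (or equivalently Mostow--Prasad rigidity applied to the closed double of $\double M$) ensures that the entire family of metrics is $\tau$-equivariant, so the fixed locus of $\tau$ remains totally geodesic and the quotient yields the claimed metrics on $M$ with totally geodesic boundary $\partial Z$; the thick-part inclusions descend to $M$ immediately. The principal technical obstacle is precisely this $\tau$-equivariance step, since the Futer--Purcell--Schleimer construction does not a priori take symmetries into account, but we expect it to follow cleanly from rigidity arguments together with the fact that the initial structure on $\double Z$ and the filling slopes are $\tau$-invariant.
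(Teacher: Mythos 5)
Your overall strategy --- apply the Futer--Purcell--Schleimer effective Dehn filling theorem after establishing that the remaining slopes in $Z$ are long --- matches the paper's, and your equivariance discussion is consistent with the paper's setup (the footnote in the description of the Dehn filling provides exactly the argument you sketch). However, two steps in your slope-length estimate are handled too optimistically and constitute genuine gaps.

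First, you assert that each remaining cusp $c$ of $Z$ has cross-section tessellated by $\ell(c)$ regular Euclidean squares of fixed side length. This is true in $Y$ but not in $Z$: the construction in Lemma~\ref{small_cusps} replaces each octahedron adjacent to a filled (small) cusp by a deformed polyhedron $P_l$, so any large cusp $c_j$ that shares octahedra with filled cusps has some squares in its cross-section replaced by different shapes. The paper introduces $k_j$, the number of small cusps sharing an octahedron with $c_j$, and uses Theorem~\ref{thm_edges}\ref{adjacent_edges} together with the bound $E \ll \log n$ to show that $k_j \ll \frac{\log n}{n^{1/24}}\ell(c_j)$ with high probability, so that $c_j$ retains at least $\ell(c_j)-k_j$ genuinely regular squares in $Z$. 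Without this, you cannot conclude that the normalised slope length of $c_j$ in $Z$ is comparable to $\sqrt{\ell(c_j)}$.

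Second, you bound only the \emph{minimum} normalised slope length. The hypothesis of \cite[Theorem 9.28]{Futer_Purcell_Schleimer} used by the paper (see also \cite[Definition 1.3]{Futer_Purcell_Schleimer}) is phrased in terms of the \emph{total} normalised length $L$ defined by $1/L^2 = \sum_j 1/L_j^2$. A lower bound on $\min_j L_j$ alone is not sufficient; one must also control the number of remaining slopes, which equals the number $E$ of interior edges of $\mathcal T_n$. The paper uses $E \ll \log n$ (Theorem~\ref{thm_edges}\ref{numtotal_edges}) to turn the per-cusp bound $L_j^2 \gtrsim n^{1/4}$ into the required bound $L^2 \ge n^{1/8}$. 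If the number of cusps were, say, linear in $n$ rather than logarithmic, your estimate would fail; this summation step must be supplied to make the application of Futer--Purcell--Schleimer legitimate.
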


\begin{proof}
  Let $c_{m+1}, \ldots, c_h$ be all remaining cusps in $Z$ (recall that $c_1, \ldots, c_m$ were the cusps of length at most $n^{1/4}$). Realising them as arbitrary horosphere quotients in $Z$ these cusps have an area $a_j$ and a length of the vertical curve $l_j$. Let $L_j = l_j/\sqrt{a_j}$; this does not depend on the arbitrary choice of horospheres (as long as their quotients are homeomorphic to $2$-tori). Following \cite[Definition 1.3]{Futer_Purcell_Schleimer} we define $L>0$ by 
  \[
  \frac 1 {L^2} = \sum_{j=m+1}^h \frac 1 {L_j^2}. 
  \]
  For $m+1 \le j \le h$ let $k_j$ be the number of cusps $c_i$, $1 \le i \le m$, which share an octahedron with $c_j$ -- in other words, the number of small cusps that share an octahedron with $c_j$. We claim that with probability tending to 1 we have $\max(k_j) \ll \frac{\log(n)}{n^{1/24}} \ell(c_j)$. 
  
  To prove this we separate two cases: first, when $n^{1/4} \le \ell(c_j) \le n^{7/24}$ (note $7/24 = 1/3 - 1/24 = 1/4+1/24$) we have by \ref{thm_edges}\ref{adjacent_edges} that with probability tending to 1 we have $k_j = 0$ for all these $j$. In the remaining cases we have that $k_j \ll h \cdot n^{1/4}$ and since $h = E$ (the number of edges in the original triangulation) and $E \ll \log(n)$ by \ref{thm_edges}\ref{numtotal_edges} it follows that if $\ell(c_j) \ge n^{7/24}$
  \[
  k_j \ll \log(n)n^{1/4} \le \frac{\log(n)}{n^{1/24}} \ell(c_j)
  \]
  which finishes the proof of the claim.

  Now it follows from Lemma \ref{small_cusps} if $m+1 \le j \le h$ we have at least $\ell(c_j) - k_j$ regular squares in the tesselation of $c_j$ in $Z$, and so
  \[
  \max_{m+1 \le j \le h}\left(\frac{1}{L_j^2}\right) \le \max_{m+1 \le j \le h} \left(\frac 1{\ell(c_j) - k_j}\right) \ll \frac 1{n^{1/4}}. 
  \]
  Using \ref{thm_edges}\ref{numtotal_edges} again we get that with asymptotic probability 1 we have
  \[
  \sum_{j=m+1}^{h} \frac 1{L_j^2} \le \frac{\alpha \log(n)^2}{n^{1/4}}
  \]
  in particular $L^2 \ge n^{1/8}$ with asymptotic probability 1. With this our lemma is an immediate consequence of \cite[Theorem 9.28]{Futer_Purcell_Schleimer} as this implies that for any fixed $\delta$, with asymptotic probablity 1 we have that $Z$ satisfies the hypothesis (9.30) in this statement (with $\epsilon = \delta$). 
\end{proof}

%%%%%%%%%%%%%%%%%%%%%%%%%%%%%%%%%%%%%%%%%%%%%%%%%%%%%%%%%%%%

\subsection{Expansion}

For a non-necessarily compact Riemannian manifold $V$ we denote by $\lambda_1(V)$ the bottom of the discrete spectrum of the Laplace--Beltrami operator of $V$ (if $V$ has non-empty boundary we take it to be the minimum between the spectra with Neumann or Dirichlet conditions). Using the results from the preceding section and comparison results in spectral geometry due to Mantuano and Hamenst\"adt we prove the following. 

\begin{prop} \label{compact_expansion}
  There exists $c_\lambda > 0$ such that: 
  \[
  \lim_{n \to +\infty} \mathbb P[\lambda_1(M_n) \ge c_\lambda] = 1. 
  \]
\end{prop}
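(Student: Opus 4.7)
The plan is to combine the geometric control over $M_n$ provided by Lemma \ref{large_cusps} with a spectral gap for the random regular graph $\mathcal{G}_n$, transferred through a discretization principle. I would proceed in three steps.

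\textbf{Step 1 (Spectral gap of the dual graph).} The dual graph $\mathcal{G}_n$ is distributed according to the configuration model on $4$-regular graphs with $n$ vertices. By Friedman's theorem (or, in a weaker form that suffices here, by Broder--Shamir or Bollob\'as), the first non-trivial eigenvalue of the combinatorial Laplacian $\lambda_1(\mathcal{G}_n)$ is bounded below by a uniform constant $c_0>0$ with probability tending to $1$ as $n\to+\infty$. Because we may condition on $\mathcal{G}_n$ being simple without changing asymptotic probabilities (cf.\ \eqref{eq_asymptabscont}), the same holds for $M_n$.

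\textbf{Step 2 (Transfer to the cusped manifold $Y_n$).} Using a Brooks--Burger-type discretization principle, adapted to the non-compact setting by Mantuano (with the treatment of the cusps due to Hamenst\"adt), I would transfer this spectral gap to a uniform lower bound $\lambda_1(Y_n)\geq c_1$, a.a.s. The discretization is natural: place one vertex at the centre of each octahedron and one edge across each identified pair of faces; the resulting graph is precisely $\mathcal{G}_n$. What the transfer principle requires is (i) bounded geometry of a fundamental piece, which is automatic here since every octahedron is isometric to the regular right-angled ideal one, and (ii) a uniform lower bound for the Dirichlet spectrum of each cusp on a horospherical cross-section, which is standard for hyperbolic $3$-cusps.

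\textbf{Step 3 (Stability under Dehn filling).} Lemma \ref{large_cusps} provides, for every small $\delta>0$ and $\eta>0$, a $(1+\eta)$-bilipschitz comparison between the $\delta$-thick parts of $Y_n$ and $M_n$; the complement of the $\delta$-thick part in $M_n$ is a disjoint union of Margulis tubes around the filled cores, each of which has uniformly bounded geometry (being $(1+\eta)$-bilipschitz to a fixed solid torus model, where the tube radius is controlled by the Futer--Purcell--Schleimer estimates used in the proof of Lemma \ref{large_cusps}). Using the min--max characterisation of $\lambda_1$, a test function realising $\lambda_1(M_n)$ can be cut off at the boundary of the $\delta$-thick part and pushed to a test function on $Y_n$ (and vice versa), with an error controlled by the uniform bilipschitz constant and the uniform lower bound for the first Dirichlet eigenvalue of a Margulis tube (which follows from a standard separation-of-variables argument since the tube has a product structure up to bounded distortion). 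Combining this with Steps 1--2 yields the desired uniform lower bound $\lambda_1(M_n)\geq c_\lambda>0$, a.a.s.

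\textbf{Main obstacle.} The most delicate point is Step~2: the classical Brooks--Burger transfer assumes globally bounded geometry, whereas $Y_n$ has cusps. One must either invoke Mantuano's variant together with an explicit Dirichlet decomposition along the horospherical cross-sections separating the cusps from the thick part, or directly apply Hamenst\"adt's extension to cusped hyperbolic manifolds. The subtlety is that the number of cusps and their lengths vary with $n$ (by Theorem \ref{thm_edges}), so one needs to verify that the transfer constants remain uniform, which is where the fact that every cusp has length at least $3$ and the integral bound on $\sum_k k E_k = 6n$ come into play.
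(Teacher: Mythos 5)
Your overall strategy---transferring the spectral gap of the random $4$-regular dual graph $\calG_n$ to the manifold via a Brooks--Burger/Mantuano discretisation---is the one the paper uses, but the paper routes it differently and the difference matters.

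The paper's first move, which your proposal does not make, is to pass to the \emph{closed double} $N_n = \double M_n$: since $L^2(\double M_n)$ decomposes under the reflection in $\partial M_n$ into the Neumann and Dirichlet subspaces for $M_n$, one has $\lambda_1(\double M_n) = \lambda_1(M_n)$. Working with $N_n$ is a genuine simplification. It is a \emph{closed} hyperbolic $3$-manifold, so Hamenst\"adt's comparison $\lambda_1(N_n) \geq \lambda_1((N_n)_{\geq\delta})/3$ and Mantuano's discretisation theorem (which requires uniformly bounded geometry---automatic for the compact thick part of a closed manifold) apply directly. One then chains quasi-isometries of \emph{thick parts}, $(N_n)_{\geq\delta} \sim \double Z_{\geq 6\delta/5} \sim \double Y_{\geq 6\delta/5} \sim \double\calG_n$, via Lemmas~\ref{small_cusps} and~\ref{large_cusps}, and finishes with Friedman's theorem. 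Crucially, a spectral gap for $Y_n$ itself is never needed.

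Your Step~2 is where the gap is. You want a uniform lower bound on $\lambda_1(Y_n)$, but $Y_n$ is non-compact with cusps whose cross-sections are not uniformly bounded: the cusp lengths $\ell(c)$ range up to $6n$, so the corresponding cusp tori can have area growing with $n$. Hence $Y_n$ is not of globally bounded geometry and the Brooks--Burger/Mantuano transfer does not apply as stated; this is exactly the obstacle you flag, but your suggested remedy points in the wrong direction. The integral identity $\sum_k k E_k = 6n$ and the constraint $\ell(c)\geq 3$ control the \emph{small} cusps, not the large ones, which are the problematic ones for bounded geometry. The paper sidesteps this entirely by never touching $\lambda_1(Y_n)$---it only ever compares discretisations of thick parts.

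A secondary but real flaw is in Step~3: the Margulis tubes of $M_n$ are \emph{not} ``$(1+\eta)$-bilipschitz to a fixed solid torus model.'' Theorem~\ref{thm_main_geometry}(e) shows $\sys(\double M_n)\to 0$ (like $n^{-1/4+\eps}$), so core geodesics become arbitrarily short and tube radii grow like $\log n$; there is no $n$-independent model. What does rescue a Dehn-filling stability argument is that the first Dirichlet eigenvalue of a Margulis tube is bounded below uniformly \emph{regardless} of its radius, but that requires the actual thin-part spectral estimate (as in Hamenst\"adt), not the bounded-geometry assertion you make.
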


\begin{proof}
One way to prove Proposition \ref{compact_expansion} would be a minor modification of the argument in \cite[Section 4]{BrooksApollonian}, based on the Cheeger constant of $M$ (see also the appendix to \cite{Breuillard_lecture}). We will instead work with the double $N = \mathcal DM$ to be able to apply directly the results by Hamenst\"adt and Mantuano.

In this proof we work with a $M \in \Xi_n$ which carries a hyperbolic metric with totally geodesic boundary (which we proved happens with asymptotic probability 1). 

%%%%%%%%%%

Let $N = \mathcal DM$ be the double of $M$ along its (totally geodesic) boundary, which is a closed hyperbolic manifold. The space $L^2(\mathcal DM)$ decomposes into the direct sum of $\pm 1$ eigenspaces for the symmetry in $\partial M$ and these spaces correspond to spaces of functions on $M$ satisfying Neumann or Dirichlet conditions on $\pl M$. So we have that $\lambda_1(\mathcal DM) = \lambda_1(M)$ and in the rest of the proof we will be concerned with establishing that $\lambda_1(\mathcal D X)$ is bounded away from 0. 

%%%%%%%%%%

We fix a Margulis constant $\delta$. By \cite[Theorem 1]{Hamenstaedt_comp} we have that $\lambda_1(N) > \lambda_1(N_{\ge \delta})/3$ (or $\lambda_1(N)$ is uniformly bounded away from zero, in which case we are finished). So we must bound $\lambda_1(N_{\ge \delta})$ from below. 

To do so we will use the following result which is an immediate application of \cite[Theorem 3.7]{Mantuano}: 
\begin{itemize}
\item let $V$ be a compact hyperbolic 3--manifold with $\inj(V) \ge \delta$ (for example the $\delta$-thick part of a manifold of finite volume if its boundary is smooth),
\item Let $\mathcal X$ be a maximal $\delta/2$-separated subset of $V$, on which we put the graph structure where there is an edge between $x, y \in \mathcal X$ if they are at distance at most $2\delta$ from each other in $V$ ($\mathcal X$ is called a discretisation of $V$, and $\delta$ its mesh). 
\end{itemize}
Then there is $c > 0$ depending only on $\delta$ such that $\lambda_1(V) \ge c\lambda_1(\mathcal X)$.

We record the following well-known facts which we will use to compare between discretisations of our manifolds $Y, Z$ and $M$

\begin{lem} \label{disc_qi}
  Let $E_1, E_2$ be two metric geodesic spaces and $\mathcal X_i$ a discretisation of $E_i$. 
  \begin{enumerate}
  \item The inclusion $\mathcal X_i \subset E_i$ is a quasi-isometry with constants depending only on the mesh.
    
  \item If $\varphi$ is a quasi-isometry from $E_1$ to $E_2$ and $q$ is a nearest-neighbour projection from $E_2$ to $\mathcal X_2$ then $q\circ\varphi$ induces a quasi-isometry from $\mathcal X_1$ to $\mathcal X_2$, whose quasi-isometry constants depend only on those of $\varphi$ and on the meshes of $\mathcal X_1, \mathcal X_2$.
  \end{enumerate}
\end{lem}

\begin{proof}
  To prove the ``quasi-isometric embedding'' part of the first statement take $x, x' \in \mathcal X_i$, then the nerve of a cover of a geodesic in $E_i$ between $x, x'$ by $\delta$-balls (where $\delta$ is the mesh) centered in $\mathcal X_i$ gives a path in $X_i$ with length at most $2\delta^{-1}d_{E_i}(x, x')$; the reverse inequailty is immediate. It is also immediate to check that a quasi-inverse is given by any nearest-point projection, which is a quasi-isometry whose constants also depend only on the mesh. The second point immediately follows. 
\end{proof}

Let $\mathcal G_1$ be a discretisation of $N_{\ge\delta}$ with mesh $\delta/2$. Let $\varphi : N_{\ge\delta} \to \mathcal DY_{\ge\delta}$ be defined as follows: by Lemma \ref{large_cusps} we have $M_{\ge\delta} \supset Z_{\ge 6\delta/5}$, so we can define a retraction $\pi_1 : M_{\ge\delta} \to Z_{\ge 6\delta/5}$ by following the geodesic flow in the direction orthogonal to the boundary $\pl Z_{\ge 5\delta/6}$---if there are multiple possible directions to follow, i.e. if we are on a core geodesic, we choose one arbitrarily. We extend this to $N_{\ge\delta} = \mathcal DM_{\ge \delta}$ by symmetry. By the rest of the statement of the lemma, this is $(1+\eta)$-bilipschitz on $N_{\ge\delta}$ and since $M_{\ge\delta} \subset Z_{\ge 5\delta/6}$, for $x \in N_{\ge\delta} \setminus \mathcal D Z_{\ge 6\delta/5}$ we have $d_M(x, \mathcal DZ_{\ge 6\delta/5}) \le a\delta$ for an absolute $a$. It follows that $\pi$ is a $(1+\eta, b\eta\delta)$-quasi-isometry, for some absolute $b$. Now we extend by symmetry the $J_0$-bilipschitz map $\psi$ from $Z_{\ge 6\delta/5}$ to $Y_{\ge 6\delta/5}$ given by Lemma \ref{small_cusps} and put $\varphi = \psi \circ \pi$, which from what we said is a quasi-isometry from $N_{\ge\delta}$ to $\mathcal D Y_{\ge 6\delta/5}$ with constants depending only on $\delta, \eta$. 

Applying the lemma to $\varphi$ we get that $\mathcal G_1$ and an arbitrary discretisation $\mathcal G_2$ of $Y_{\ge 6\delta/5}$ with mesh $\delta/2$ are quasi-isometric to each other, with constants depending only on $\delta$. Let $\mathcal{DG}$ be the graph dual to the tesselation of $N$ by octahedra; it is obtained by taking two copies of $\mathcal G$ and adding four edges between every pair of corresponding vertices. On $\mathcal G_2$ we can define a map to $\mathcal{DG}$ by mapping all vertices in a given octahedron of $M$ to the center of that octahedron (we choose arbitarily for vertices on the boundary between two faces). This is a quasi-isometry with constants depending only on $\delta$ (via the diameter of $O_{\ge \delta}$). Composing $\varphi$ with this we get a quasi-isometry from $\mathcal G_1$ to $\mathcal G$. By \cite[Theorem 2.1]{Mantuano} it follows that $\lambda_1(\mathcal G_1) \ge c'\lambda_1(\mathcal{DG})$ where $c$ depends only on $\delta$. As $\mathcal G_1$ is a discretisation of $\mathcal DX_{\ge\delta}$, by loc.~cit., Theorem 3.7 (see the statement at the beginning of the section) it finally follows that $\lambda_1(\mathcal DX_{\ge\delta}) \ge c''\lambda_1(\mathcal{DG})$. It is well known that $\mathcal G_n$ is an expander asymptotically almost surely; the sharpest bounds on its spectral gap are due to Friedman \cite{Friedman}. The double $\mathcal{DG}$ is quasi-isometric to $\mathcal G$ with uniform constants via the inclusion, so loc. cit., Theorem 2.1 gives us that it is also an expander a.a.s. We conclude that $\lambda_1(N_{\ge\delta})$, and hence also $\lambda_1(N)$, is bounded away from zero. 
\end{proof}

%%%%%%%%%%%%%%%%%%%%%%%%%%%%%%%%%%%%%%%%%%%%%%%%%%%%%%%%%%%%

\subsection{Volumes}

If the manifold $X \in \Xi_n$ is hyperbolic then it has a hyperbolic volume $\vol(X)$. Otherwise we take $\vol(X) = 0$. Recall that $v_O$ denotes the volume of the right-angled hyperbolic octahedron. 

\begin{prop} \label{volume_bounds}
 We have 
\[ \vol(M_n) \sim n\cdot v_O \quad \text{as } n\to\infty\]
in probability.
\end{prop}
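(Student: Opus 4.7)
The plan is to prove upper and lower bounds on $\vol(M_n)$ separately, both of the form $n v_O (1 + o(1))$, by using the intermediate manifold $Z_n$ constructed in Lemma~\ref{small_cusps} and the bilipschitz control between $Z_n$ and $M_n$ provided by Lemma~\ref{large_cusps}. The key observation is that $\vol(Y_n) = n v_O$ exactly, since $Y_n$ is glued from $n$ copies of the regular right-angled ideal octahedron; the Dehn fillings producing $Z_n$ and $M_n$ then alter volume only by $o(n)$ in probability.

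For the upper bound, the doubled manifold $\double Y_n$ is a cusped hyperbolic manifold of volume $2 n v_O$, and $\double M_n$ is a hyperbolic Dehn filling of $\double Y_n$ (invariant under the reflection in the boundary). Thurston's strict monotonicity of volume under hyperbolic Dehn filling gives $\vol(\double M_n) < \vol(\double Y_n)$, hence $\vol(M_n) < n v_O$.

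For the lower bound, I first estimate $\vol(Z_n)$. By the explicit construction in Lemma~\ref{small_cusps}, asymptotically almost surely $Z_n$ decomposes geometrically into $n - k$ unchanged regular ideal octahedra and $k$ polyhedra $P_{\ell(c_i)}$ (one per small cusp of $Y_n$), so
\[
\vol(Z_n) = (n - k) v_O + \sum_{i=1}^k \vol(P_{\ell(c_i)}).
\]
The volumes $\vol(P_l)$ are uniformly bounded for $l \geq 3$ (as $l \to \infty$ the polyhedron $P_l$ degenerates to $O$ so the sequence converges, and only finitely many small values of $l$ need to be inspected), and $k$ is at most the number of edges of $\calT_n$ of length at most $n^{1/4}$, which by Theorem~\ref{thm_edges}(c) and Markov's inequality is $O(\log n)$ in probability. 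Hence $\vol(Z_n) = n v_O + O(\log n) = n v_O(1+o(1))$ in probability. Now fix a Margulis constant $\delta$ and $\eta > 0$. Lemma~\ref{large_cusps} gives the inclusion $Z_{n, \geq 6\delta/5} \subset M_{n, \geq \delta}$ as a $(1+\eta)$-bilipschitz map between the respective hyperbolic metrics, and hence
\[
\vol(M_n) \geq \vol(M_{n, \geq \delta}) \geq (1+\eta)^{-3} \vol(Z_{n, \geq 6\delta/5}).
\]
The thin part $Z_{n, < 6\delta/5}$ is a disjoint union of cusp neighbourhoods, one per (large) cusp of $Y_n$ surviving in $Z_n$, and the number of these is bounded by the total edge count $E$ of $\calT_n$, which by Theorem~\ref{thm_edges}(a) and Markov is $O(\log n)$ in probability. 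Each cusp contributes a volume bounded by a constant depending only on $\delta$, so $\vol(Z_{n, < 6\delta/5}) = O(\log n)$ and $\vol(Z_{n, \geq 6\delta/5}) = n v_O (1 + o(1))$. Combining, $\vol(M_n) \geq (1+\eta)^{-3} n v_O (1 + o(1))$, and letting $\eta \to 0$ yields $\vol(M_n) \geq n v_O (1 - o(1))$ in probability, completing the proof.

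The main point requiring care is the uniform bound on $\vol(P_l)$, which I expect follows from a routine continuity argument: the polyhedra $P_l$ are determined up to isometry by Andreev data depending continuously on the angle parameter $2\pi/l$, and they converge geometrically to the ideal octahedron $O$ as $l \to \infty$. Everything else is bookkeeping that repackages the quantitative data already present in Lemmas~\ref{small_cusps}, \ref{large_cusps} and Theorem~\ref{thm_edges}; in particular, no new probabilistic input is needed beyond the expected edge counts.
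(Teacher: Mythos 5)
The upper bound is fine and matches the paper (volume decreases under hyperbolic Dehn filling). The lower bound, however, has a genuine error in the estimate of the cusp contribution to $\vol(Z_{n, < 6\delta/5})$.

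You claim that ``each cusp contributes a volume bounded by a constant depending only on $\delta$,'' and since there are $O(\log n)$ cusps you conclude $\vol(Z_{n, < 6\delta/5}) = O(\log n)$. This is wrong: the cusps of $Z_n$ are not uniformly bounded. A cusp corresponding to an edge of combinatorial length $\ell$ has a cross-section tesselated by $\ell$ Euclidean squares, and because the volume of a cusp equals (half) the area of any horospherical cross-section, a cusp truncated at edge-length scale $\delta$ contributes volume on the order of $\ell\delta^2$. The total over surviving cusps is on the order of $\bigl(6n - O(n^{1/4}\log n)\bigr)\delta^2 = \Theta(\delta^2 n)$, not $O(\log n)$; and in fact a single cusp can have $\ell$ comparable to $n$, so it is not even true that individual cusps have $O_\delta(1)$ volume. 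This is precisely why the paper's proof does not end with a fixed $\delta$: it bounds the thin-part volume as $O(\log n) + O(\delta^2 n)$ (tubes plus cusps) and must let $\delta \to 0$ in addition to $\eta \to 0$. Your argument only sends $\eta \to 0$, which is not enough.

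A secondary, non-fatal slip: in the decomposition of $Z_n$ you replace $k$ octahedra by $k$ polyhedra $P_{\ell(c_i)}$, with $k$ the number of small cusps and hence $O(\log n)$. But each small cusp $c_i$ touches $\ell(c_i)$ octahedra, all of which are replaced, so the number of changed octahedra is $\sum_i \ell(c_i) = O(n^{1/4}\log n)$, not $O(\log n)$. This does not affect the conclusion since it is still $o(n)$, but the formula $\vol(Z_n) = (n-k)v_O + \sum_{i=1}^k \vol(P_{\ell(c_i)})$ is not correct as written.
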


\begin{proof}
  If $M \in \Xi_n$ is hyperbolic then it is a Dehn surgery on a manifold $Y \in \Theta_n$. The latter is a union of $n$ copies of the octahedron $O$. As the hyperbolic volume decreases under hyperbolic Dehn surgery we get that $\vol(X) \le nv_O$. 

  All statements in the following paragraph hold asymptotically almost surely. By Lemma \ref{small_cusps} we have that
  \begin{equation} \label{compvol1}
    \vol(Z) \ge \vol(Y) - O(n^{1/4}\log n) = n\vol(O) - O(n^{1/4}\log n) 
  \end{equation}
  (at most $4n^{1/4}\log n$ octahedra are changed from $Z$ to $Y$ since this is an upper bound for the number of squares in the small cusps in a generic $Y$ by Theorem \ref{thm_edges}\ref{numtotal_edges}). By Lemma \ref{large_cusps} we have that for any positive $\delta$ and $\eta$ we have, since if two riemannian metrics on a manifold are $\eta'$-blilipschitz to each other ther the volume forms are $O(\eta)$ pointwise close to each other, that: 
  \[
  \vol(M) \ge (1 - c\eta)\vol(Z_{\ge 6\delta/5})
  \]
  for some $c>0$ independent of $\delta, \eta$. The thin part of $Z$ is made of $O(\log(n))$ tubes coming from the Dehn filling of small cusps, so constibuting a volume $O(\log n)$, and the rest is cusps. For a cusp $C$ we have $\vol(C) = \mathrm{Area}(\pl C)$, and the boundary of the cusps of $Z_{\ge 6\delta/5}$ is made of $n - O(\log n)$ euclidean squares with edge length $O(\delta)$. It follows that
  \begin{equation}\label{compvol2}
  \vol(M) \ge (1 - c\eta)\vol(Z_{\ge 6\delta/5}) \ge (1 - c\eta)\vol(Z) - \delta O(n). 
  \end{equation}
  Taking $\delta$ and $\eta$ to 0 we get the statement we want from \eqref{compvol1} and \eqref{compvol2}. 
\end{proof}

%%%%%%%%%%%%%%%%%%%%%%%%%%%%%%

%%%%%%%%%%%%%%%%%%%%%%%%%%%%%%%%%%%%%%%%%%%%%%%%%%%%%%%%%%%%
\subsection{Diameter and systole}

Lemmas \ref{small_cusps} and \ref{large_cusps} above together with our combinatorial bounds (Theorem \ref{thm_edges}) and results by Futer--Purcell--Schleimer and Bollob\'as--Fernandez-de-la-Vega imply the following bounds on the diameter and systole of $M_n$ and $\double M_n$:

\begin{prop}\label{prp_diam_sys}
\begin{itemize}
\item[(a)] There exists a constant $c_d>0$ such that the diameter $\diam(M_n)$ of $M_n$ satisfies:
\[
\lim_{n\to +\infty}\PP[\diam(M_n) < c_d \log(\vol(M_n))]  = 1
\]
\item[(b)] There exists a constant $c_s>0$ such that the systole  $\sys(M_n)$ of $M_n$ satisfies:
\[
\lim_{n\to +\infty}\PP[\sys(M_n) > c_s ]  = 1
\]
\item[(c)] For every $\eps>0$,
\[
\lim_{n\to +\infty}\PP\left[\sys(\double M_n) <  \frac{1}{n^{1/4-\eps}} \right]  = 1.
\]
The same holds for the minimal length among arcs in $M_n$ that are homotopically non-trivial relative to $\partial M_n$.
\end{itemize}
\end{prop}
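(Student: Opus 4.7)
The plan is to transfer each property from the combinatorial model, and the intermediate non-compact octahedral manifold $Y_n$, to $M_n$ using the bilipschitz control supplied by Lemmas \ref{small_cusps} and \ref{large_cusps}, together with Theorem \ref{thm_edges} and classical facts about random $4$-regular graphs.

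For (a), I would exploit that the dual graph $\calG_n$ of $\calT_n$ is a random $4$-regular graph and hence, by Bollob\'as--Fernandez-de-la-Vega, has diameter $O(\log n)$ a.a.s. Fixing a Margulis constant $\delta>0$, Lemmas \ref{small_cusps} and \ref{large_cusps} ensure that, a.a.s., the $\delta$-thick part of $M_n$ is bilipschitz-close to a gluing of thick parts of copies of the octahedron $O$ (for octahedra disjoint from small cusps) and of the replacement polyhedra $P_l$ from the Andreev step; both building blocks have bounded thick-part diameter and are glued along totally geodesic squares. A shortest path in $\calG_n$ between the blocks containing two given points then lifts to a path in the $\delta$-thick part of $M_n$ of hyperbolic length $O(\log n)$, and combined with the linear volume estimate of Proposition \ref{volume_bounds} this yields (a).

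For (b), the key observation is that no closed geodesic lies in the $\delta$-thin part of $M_n$. Indeed, by the construction in Section \ref{delta-complex} the thin part of $M_n$ is a disjoint union of regions coming from filling the annular cusps of $Y_n$, and each such filling is a solid cylinder $D\times[0,1]$: topologically a ball whose core is an \emph{arc} with both endpoints on $\pl M_n$, not a closed loop. Any closed geodesic must therefore intersect the $\delta$-thick part, where the injectivity radius is at least $\delta$, and hence has length at least $2\delta$, proving (b).

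For (c), I would first show that a.a.s. there exists a cusp of $Y_n$ with $\ell(c) \in [n^{1/4-\eps/2}, n^{1/4}]$. By Theorem \ref{thm_edges}\ref{num_edges_fixed_length} combined with Lemma \ref{cusp-edge}, the expected number of such cusps is $\tfrac{\eps}{4}\log(n)(1+o(1))$, and a second-moment argument based on the count of simple edges in this range (which dominate, by Theorem \ref{thm_edges}\ref{num_simple_edges_fixed_length} and \ref{adjacent_edges}) yields existence a.a.s. Upon doubling, such a cusp becomes a torus cusp of $\double Y_n$, and as in the computation inside the proof of Lemma \ref{large_cusps} its normalized slope length satisfies $L \gtrsim \sqrt{\ell(c)} \gtrsim n^{1/8-\eps/4}$. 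Standard Dehn-filling asymptotics, extracted from the bilipschitz control of \cite{Futer_Purcell_Schleimer}, then bound the length of the resulting core geodesic in $\double M_n$ by $C/L^2 \lesssim n^{-(1/4-\eps/2)} < n^{-(1/4-\eps)}$; in $M_n$ the corresponding core arc is essential relative to $\pl M_n$ (its double is the core geodesic above) and has length at most half of this. The main obstacle is extracting this precise $L^{-2}$ scaling of the core geodesic length from \cite{Futer_Purcell_Schleimer}, whose statement is phrased as bilipschitz distortion on the thick part rather than as a direct length bound on the new short geodesic; the remaining steps are routine combinations of Theorem \ref{thm_edges} with the geometric control already in hand.
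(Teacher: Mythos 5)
Your overall plan — transfer estimates from the dual graph and from the cusped manifold $Y_n$ to $M_n$ via the bilipschitz control of Lemmas \ref{small_cusps} and \ref{large_cusps} — matches the paper, and your argument for (b) is essentially what the paper does (slightly more compressed, but correct in spirit). There are, however, two substantive gaps.

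In (a), you bound the diameter of the \emph{thick} part $(M_n)_{\ge\delta}$ only. This does not control $\diam(M_n)$: the Dehn fillings create Margulis tubes whose radii are not a priori bounded, and a point deep inside such a tube can be very far from all of $(M_n)_{\ge\delta}$. The missing (and essential) step is to bound the tube radii. The paper does this by observing that the meridian on the boundary of each tube, traced back through the bilipschitz control of Lemmas \ref{small_cusps} and \ref{large_cusps}, corresponds to a cusp meridian in $Y_n$ of combinatorial length at most $6n$; since a standard tube of radius $r$ has meridian length $2\pi\sinh r$, this forces $r = O(\log n)$. Without that step the logarithmic diameter bound does not follow from what you wrote.

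In (c), your route genuinely diverges from the paper's and has an issue. You propose to locate a cusp of $Y_n$ with $\ell(c) \in [n^{1/4-\eps/2}, n^{1/4}]$ and apply Futer--Purcell--Schleimer asymptotics to its filling in $\double Y_n$. But a cusp of that length is a \emph{small} cusp in the paper's terminology and is handled in the Andreev step of Lemma \ref{small_cusps}, not by FPS; and one cannot apply FPS directly to $\double Y_n$ either, because its hypothesis on the total normalized slope length $L$ fails on account of the shortest cusps (which have $L_j \asymp 1$). This is precisely the reason for the paper's two-step Andreev-then-FPS architecture. The paper's actual argument for (c) is shorter than yours: after the Andreev step the remaining cusps all have $\ell \ge n^{1/4}$, hence $L_j \ge n^{1/8}$, and there are at most $E \ll \log n$ of them, so the FPS total satisfies $L^2 \ge n^{1/4-o(1)}$; then \cite[Cor.~6.13]{Futer_Purcell_Schleimer} directly gives the bound, with no second-moment argument needed.
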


\begin{proof}
We start with item (a). Bollob\'as--Fernandez-de-la-Vega \cite{BFdlV} proved that the diameter (in the graph distance) of a random $4$-regular graph $\calG_n$ on $n$ vertices satisfies
\[
\diam(\calG_n) \leq \log_3(n) + o(\log(n))
\]
in probability. 

Again, using results from graph theory \cite{Bol1,Wor}, we may assume that $\calG_n$ is conditioned to not have loops or multiple edges, so that $\calG_n$ is uniformly quasi-isometric to the $\delta$-thick part of $Y_n$ (with constants that only depend on $\delta$). 

Now we pick $\delta>0$ smaller than the Margulis constant for $\HH^3$. Using Lemmas \ref{small_cusps} and \ref{large_cusps}, plus the fact that the polytopes $P_l$ descibed in the proof of Lemma \ref{small_cusps} converge to $O$, this implies that the $\delta$-thick part $(M_n)_{\geq \delta}$ of $M_n$ is uniformly quasi-isometric to $\calG_n$. Hence, there exists a constant $C_\delta>0$ such that
\[
\diam((M_n)_{\geq \delta}) \leq C_\delta \log(n)
\]
asymptotically almost surely. 

In order to control the diameter of the thin parts of $M_n$, it's easier to think in terms of Margulis tubes, so we will consider the double $\double M_n$ as a Dehn filling of $\double Y_n$. The Margulis Lemma tells us that the thin part of $\double M_n$ consists of standard tubes (see for instance \cite[Chapter D]{BP}) of the form
\[
T_r = \st{x\in \double M_n}{ \dist(x,\gamma) < r}
\]
where $\gamma$ is a simple closed geodesic. As such, the diameter of such a tube is at most $2r+\ell(\gamma) \leq 2r + \delta$. 

The length of a meridian on the boundary torus of a standard tube is $2\pi \sinh(r)$. In $Y_n$, the length of the meridian is a constant multiple of the combinatorial length of the corresponding cusp (and hence bounded by $6n$). 

%Using the same argument as above, this means that
We want to estimate the lengths of meridians in $M_n$ in terms of those in $Y_n$. To do so we first observe that these lengths are the same between $Y_n$ and $Z_n$ by Lemma \ref{small_cusps}. Then using Lemma \ref{large_cusps} in the same way that we used to construct a retraction in the proof of Proposition \ref{compact_expansion} we see that there is a bilipschitz map (with constants independent of $n$) between the boundaries of $M_{\ge\delta}$ and $Z_{\ge\delta}$, which sends meridian to meridian. It follows that there exists a constant $D_\delta>0$ such that length of the meridian of any tube in $\double M_n$ is at most $D_\delta \cdot n$. This in turn implies that the radius of each such tube can be bounded by $E_\delta \log(n)$ for some constant $E_\delta>0$, depending on $\delta$ only. Combining this with our estimate on the diameter of the thick part and the estimates on volume from Proposition \ref{volume_bounds} this implies item (a).

We proceed to item (b). We observe that, for $\delta$ below the Margulis constant in $\HH^3$, the $\delta$-thin part of $M_n$ is simply connected. In particular, any closed geodesic that passes through the $(\delta/2)$-thin part of $M_n$ has length at least $\dist((M_n)_{\geq \delta}, (M_n)_{< \delta/2})$, which is uniformly bounded from below (by applying Lemmas \ref{small_cusps} and \ref{large_cusps}) to the $(\delta/2)$-thick part of $M_n$). The $(\delta/2)$-thick part of $M_n$ is bilipschitz to the $\widetilde{\delta}$-thick part of $Y_n$ for some uniform $\widetilde{\delta}>0$ (by Lemmas \ref{small_cusps} and \ref{large_cusps}). The systole of the latter is bounded from below by the distance between two distinct faces of $O_{\geq \widetilde{\delta}}$, which gives us a lower bound on the systole of the $(\delta/2)$-thick part of $M_n$. Together with the uniform bound on the length of geodesics that pass through the thin part, this implies a lower bound on $\sys(M_n)$.

For item (c) we use our combinatorial bounds again. Recall from the proof of Lemma \ref{large_cusps} that, with probability tending to $1$ as $n\to \infty$, the total cusp length $L$ satisfies $L^2 > n^{1/4-o(1)}$. \cite[Corollary 6.13]{Futer_Purcell_Schleimer} now immediatley implies the result.
\end{proof}

\subsection{Benjamini--Schramm convergence}

\subsubsection{Coxeter groups}

Let $T$ be ideal terahedron obtained by cutting $O$ along all of its median hyperplanes. Let $\Gamma_T$ be the associated reflection group. It is a Coxeter group with presentation
\[
\Gamma_T = \langle \sigma, \tau_1, \tau_2, \tau_3 | \tau_i^2, \sigma^2, [\tau_i, \tau_j], (\sigma\tau_j)^4 \rangle %\Gamma_P = \langle \sigma_1, \ldots, \sigma_4, \tau | \sigma_i^2, \tau^2, (\sigma_i\sigma_j)^2, (\sigma_i\tau)^4 \rangle.
\]
This group is useful for us because of the following lemma.

\begin{lem} \label{orbifold_cover}
  Any $X \in \Theta_n$ is an orbifold cover of $T$. 
\end{lem}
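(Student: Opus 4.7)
The plan is to realise $X$ explicitly as a quotient $\HH^3/\Gamma_X$ where $\Gamma_X$ is a subgroup of $\Gamma_T$; this directly yields the desired orbifold cover $X\to \HH^3/\Gamma_T=T$. The starting observation is that each octahedron in the construction of $X$ decomposes into $8$ copies of $T$ along its three median hyperplanes. This equips $X$ with a cellular decomposition into $8n$ tetrahedral cells each isometric to $T$, glued along three types of faces: the internal median faces of each octahedron, the faces identified by the octahedral gluings defining $X$, and the faces forming the totally geodesic boundary $\partial X$.

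I would then build a developing map $\widetilde X\to\HH^3$ one cell at a time. Fix a base sub-tetrahedron $T_0\subset X$ and identify it isometrically with $T\subset\HH^3$; extend by analytic continuation, realising each face-crossing by an element of $\Gamma_T$. Across a median hyperplane inside an octahedron, the transition is the reflection $\tau_i$ in the corresponding face of $T$; across a boundary face of $X$, it is the reflection $\sigma$ in the external face of $T$; across a face where two octahedra of $X$ are glued, it is a conjugate of $\sigma$ in $\Gamma_T$. Provided every transition can indeed be realised in $\Gamma_T$, the image of every sub-tetrahedron of $\widetilde X$ lies in the $\Gamma_T$-tessellation of $\HH^3$ and the holonomy of every loop is in $\Gamma_T$, so the developing map descends to $X\to \HH^3/\Gamma_T=T$.

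The main obstacle is verifying the third type of transition. For each pair of octahedral faces glued in the construction of $X$ there are three possible orientation-reversing identifications, and the resulting transition naively involves the reflection across the face composed with a $2\pi/3$-rotation of the face, which is only in the normaliser of $\Gamma_T$ in $\Isom(\HH^3)$. The point is that the adjacent octahedron in $\HH^3$ is invariant, as a set, under this rotation, and its internal subdivision into copies of $T$ is permuted by it: one can therefore absorb the twist of the gluing into the choice of isometric identification of each sub-tetrahedron of the target octahedron with $T$, bringing the transition back into $\Gamma_T$. Carrying this rechoice out coherently across all octahedra, using the compatibility of the median subdivision of $O$ with the three-fold symmetries of its faces, is the step that requires the most care.
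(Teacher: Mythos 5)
Your developing-map strategy is genuinely different from the paper's proof, which instead replaces each vertex of the dual graph by a labelled cube and produces an explicit $\{\sigma,\tau_1,\tau_2,\tau_3\}$-edge-colouring of the resulting $4$-regular graph. More importantly, though, your argument has a real gap precisely at the step you flag at the end. The source of the difficulty is that the stabiliser in $\Isom(\HH^3)$ of the $\Gamma_T$-tessellation is strictly larger than $\Gamma_T$: since $T$ itself has an $S_3$ of isometries permuting its three $\tau_i$-walls, the stabiliser is $\Gamma_T\rtimes S_3$. Analytic continuation through the sub-tetrahedral decomposition therefore only tells you that every transition is \emph{some} wall reflection of the tessellation and that the holonomy of $\pi_1^{\mathrm{orb}}(X)$ lies in $\Gamma_T\rtimes S_3$; it does \emph{not} by itself force the holonomy into $\Gamma_T$. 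Concretely, one needs to be able to label the three median hyperplanes of every octahedron by $\tau_1,\tau_2,\tau_3$ in a way that is preserved across every gluing, and the obstruction to such a consistent labelling is precisely a class in $S_3$ attached to each loop in the dual graph.

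You acknowledge this (``Carrying this rechoice out coherently across all octahedra\ldots is the step that requires the most care'') but offer no argument for why a coherent choice exists. This is not a routine technicality: it is the entire content of the lemma, and it is exactly the point that the paper's edge-colouring of $\mathcal G'$ is designed to address. Until this is either verified directly (e.g.\ by checking that the $[\tau_i,\tau_j]$ and $(\sigma\tau_j)^4$ relations hold for the induced permutation action on the $8n$ sub-tetrahedra, which amounts to checking that the gains around cycles of the dual graph lie in the Klein four-group $V_4\le S_4$) or derived from some structure you haven't exploited, your proposal does not constitute a proof. I would encourage you to test your claimed reduction on a single octahedron with a self-gluing of two of its non-labelled faces: already there the discrepancy you describe as a ``$2\pi/3$-rotation'' cannot obviously be absorbed, and working out whether it can will clarify where the argument actually needs labels rather than geometry.
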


\begin{proof}
  Let $X \in \Theta_n$ and $\mathcal G$ the graph dual to its tesselation by octahedra. Then $X$ is an orbifold cover of $O$ if and only if $\mathcal G$ is 4-edge-colourable, which is not always the case.

  However if we replace each vertex of $\mathcal G$ by a cube with four outgoing edges placed at pairwise non-adjacent vertices we get a graph $\mathcal G'$. We colour its edges as follows : all edges between cubes are coloured with $\sigma$, and inside the cube we choose the unique colouring corresponding to the labels (in $\ZZ/3\ZZ$)---this makes sense since the edge corresponds to a face of $O$ and the adjacent edges of $O$ are each specified by a $\tau_i$. This specifies a unique map $X \to T$ which is an orbifold cover. 
\end{proof}

%%%%%%%%%%%%%%%%%%%%%%%%%%%%%%

\subsubsection{Invariant random subgroups}\label{sec_BSconv}

Let $G$ be a Lie group (we will only consider $G = \mathrm{PGL}_2(\CC)$). We recall from \cite{7s} that an {\em invariant random subgroup} of $G$ is a Borel probability measure on the Chabauty space of closed subgroups of $G$ (a compact Hausdorff topological space the definition of which can be found in loc. cit.) which is invariant under the action of $G$ on this space by conjugation. 

An important constructions of such is the following: if $\Lambda \le G$ is a subgroup whose normaliser is a lattice $\Gamma$ in $G$ then the closure of the conjugacy class of $\Lambda$ supports a unique invariant random subgroup (the image of Haar measure on $G/\Gamma$). We denote this by $\mu_{\Lambda}$.

Using this we can associate an invariant random subgroup to the random variable $M_n$ as follows: let $\Xi_n^{\mathrm{hyp}}$ be the subset of manifolds in $\Xi_n$ which support a complete hyperbolic structure with totally geodesic boundary. For $M \in \Xi_n^{\mathrm{hyp}}$ we consider the hyperbolic orbifold on $M$ whose singular locus is its boundary $\pl M$ (a mirror) and the hyperbolic structure on the interior is that of $M$. This is a compact hyperbolic orbifold and we choose an arbitrary monodromy group $\Gamma_M \le G$ for it and let 
\[\mu_M = \mu_{\Gamma_M}.\]
If $M \not\in \Xi_n^{\mathrm{hyp}}$ we take $\mu_M$ to be the Dirac mass at the trivial subgroup. We put:
\begin{equation} \label{IRS}
  \mu_n = \sum_{M \in \Xi_n} \mathbb P(M_n = M) \mu_M. 
\end{equation}

We also need to define some other invariant random subgroups which will play a role in what follows. Consider the ideal octahedron $O$ as a complete hyperbolic orbifold (all faces being mirrors). Let $\Gamma_O$ be its orbifold fundamental group, which is generated by the reflections on the sides of $O$. Let $Q$ be the group generated by the rotations of angle $2\pi/3$ in the faces. Then $P = Q \backslash O$ is an hyperbolic orbifold; let $\Gamma_P$ be its orbifold fundamental group, which we view as a lattice in $G$ (we need this larger group because not every manifold in $\Xi_n$ is an orbifold cover of $O$). 

Since $O$ is right-angled, mapping four reflections on nonadjacent faces to the identity gives a map 
\[
\pi : \Gamma_O \to \ast_{i=1}^4 \ZZ/2\ZZ
\]
(each remaining face maps to the generator of one of the free factors), and the latter is isomorphic to $D_\infty * D_\infty$ where $D_\infty = \ZZ/2\ZZ * \ZZ/2\ZZ$ is the infinite dihedral group. Let $O^\infty$ be the associated cover; it is the infinite hyperbolic polyhedron obtained by gluing copies of $O$ in a 4-valent tree pattern, along non-adjacent faces ($D_\infty*D_\infty$ acts via its action on the 4-valent tree). Note that if we view the mirrors as a totally geodesic boundary $O^\infty$ is the universal cover of any manifold in some $\Theta_n$. 

Since $Q$ respects the colouring of the faces of $O$ we have that $\ker(\pi)$ is a normal subgroup in $\Gamma_P$. We let $\mu_{O^\infty}$ be the invariant random subgroup of $G$ associated to the normal subgroup $\ker(\pi) \le \Gamma_P$. Our main result in this section is then the following. 

\begin{prop}
  The invariant random subgroup $\mu_n$ converges to $\mu_{O^\infty}$
\end{prop}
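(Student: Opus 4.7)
The plan is to translate the convergence of invariant random subgroups into Benjamini--Schramm convergence of pointed random orbifolds, to reduce the question from $M_n$ to the simpler cusped model $Y_n$ using the geometric approximation of Lemmas \ref{small_cusps} and \ref{large_cusps}, and then to invoke the local tree-likeness of the configuration model to identify the limit with $\mu_{O^\infty}$. Concretely, $\mu_n \to \mu_{O^\infty}$ on the Chabauty space is equivalent to the statement that for every $R>0$ and every continuous bounded function $f$ on isometry classes of pointed hyperbolic $3$-orbifolds with right-angled totally geodesic boundary,
\[ \EE\left[\ind_{M_n \in \Xi_n^{\mathrm{hyp}}}\cdot f(B_R(M_n, p_n), p_n)\right] \longrightarrow \EE\left[f(B_R(O^\infty, p_\infty), p_\infty)\right], \]
where $p_n$ is sampled uniformly in $M_n$ according to the hyperbolic volume and $p_\infty$ is a lift to $O^\infty$ of a uniformly random point in $P = Q\backslash O$.

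The reduction from $M_n$ to $Y_n$ rests on comparing the volumes of the regions where the two metrics differ. By Proposition \ref{volume_bounds} we have $\vol(M_n)\sim nv_O$ in probability, while the locus on which the bilipschitz control of Lemmas \ref{small_cusps} and \ref{large_cusps} breaks down consists of the Margulis tubes produced by the Dehn fillings (whose number and meridian lengths are controlled as in the proof of Proposition \ref{prp_diam_sys}) together with the octahedra containing a short cusp treated by Andreev's theorem. By Theorem \ref{thm_edges}\ref{numtotal_edges} and \ref{adjacent_edges} the total volume of this locus is $o(n)$ with asymptotic probability $1$. Consequently a uniformly random point $p_n \in M_n$ lies, with probability tending to $1$, in a region $(1+o(1))$-bilipschitz to its image in $Y_n$, and the induced distribution on $Y_n$ is asymptotically the hyperbolic volume measure---equivalently, a uniformly random octahedron together with a uniformly random point inside it.

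It then remains to show that $Y_n$, pointed in this way, Benjamini--Schramm converges to $\mu_{O^\infty}$. Since $Y_n$ is a gluing of $n$ copies of $O$ according to the random $4$-regular graph $\calG_n$ (conditioned on simplicity), the isometry type of $B_R(Y_n, q)$ is, for $R$ bounded and $n$ large, completely determined by the ball of fixed combinatorial radius $k=k(R)$ in $\calG_n$ around the octahedron containing $q$ together with the gluing labels on the corresponding edges. By the classical fact that the configuration model Benjamini--Schramm converges to the $4$-regular tree---that is, the $k$-neighbourhood of a uniformly random vertex is asymptotically a tree for every fixed $k$, a statement consistent with the short-cycle estimate of Theorem \ref{thm_edges}\ref{adjacent_edges}---this ball is with probability tending to $1$ isometric to an $R$-ball in $O^\infty$ centred at a uniform lift of a point in the central octahedron $O$. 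Since $Q$ acts on $O$ by isometries preserving the labelling of faces and $\ker(\pi)$ is normal in $\Gamma_P$, sampling a uniform point in $O$ is equivalent to sampling from the pushforward of Haar on $G/\Gamma_P$ to the Chabauty space, which is by definition $\mu_{O^\infty}$.

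The main technical obstacle I anticipate is the careful bookkeeping in the second step: one needs a single event of asymptotic probability $1$ on which both the volume comparison $\vol(M_n) = (1+o(1))nv_O$ and the combinatorial estimates controlling the non-generic locus (short cycles in $\calG_n$ and octahedra adjacent to short cusps) hold simultaneously, in a form strong enough to make the induced sampling distribution on $Y_n$ converge to the hyperbolic volume measure when tested against fixed-radius ball functionals. Once that event is fixed, the identification of the limit as $O^\infty$ follows from the tree-likeness of $\calG_n$, and the orbifold-theoretic matching with $\mu_{O^\infty}$ reduces to the explicit description of $O^\infty$ together with the $Q$-equivariance of the face-colouring.
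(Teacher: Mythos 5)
Your overall strategy---translating Chabauty/IRS convergence into Benjamini--Schramm convergence of pointed spaces, reducing $M_n$ to the cusped model $Y_n$ via the bilipschitz control of Lemmas \ref{small_cusps} and \ref{large_cusps}, and then using the local tree-likeness of the configuration model to identify the limit---is the same broad route as the paper's. But the paper implements the first and the last step quite differently from what you sketch, and your reduction step has a genuine gap.

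On the reduction step, your argument is that the locus where the bilipschitz control fails has volume $o(n)$, so a uniformly random point $p_n$ lies outside it with high probability. This is not enough: Benjamini--Schramm convergence requires control of the \emph{entire} metric ball $B_R(p_n)$, and even when $p_n$ is in a good region, that ball can run into a Margulis tube or into one of the modified octahedra. The bilipschitz control in Lemmas \ref{small_cusps} and \ref{large_cusps} is stated between \emph{thick parts}, not between full metric balls, so an $R$-ball that extends into the thin part is not directly comparable. The paper handles this by explicitly conditioning the random point to lie in the $\delta$-thick part, proving convergence of the resulting pointed (compact, possibly no longer invariant) random metric spaces $\mu_n^{\ge\delta}$, and then upgrading to full convergence by the observation that the map $(M,x)\mapsto (M_{\ge\delta},x)$ is a homeomorphism onto its image (the boundary of each thin component determines its geometry) together with compactness of the space of hyperbolic manifolds pointed in their $\delta$-thick part. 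Without this thick-part/homeomorphism step, or something playing the same role, you cannot conclude.

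On the identification of the limit, the paper does not argue ball-by-ball as you do. It works entirely at the level of the discrete group $\Gamma_P$: every manifold in $\Theta_n$ is an orbifold cover of $P=Q\backslash O$, giving a finite-index subgroup $\Gamma_n\le\Gamma_P$; the Schreier graph of $\Gamma_P/\Gamma_n$ is the dual $4$-regular graph with each vertex blown up to a fixed gadget; the configuration model BS-converges to the $4$-valent tree; hence $\Gamma_n\to\ker(\pi)$ as IRS of $\Gamma_P$; and continuity of induction of IRS from a lattice to the ambient Lie group yields $\mu_n'\to\mu_{O^\infty}$. This avoids any discussion of fixed-radius metric balls and is cleaner than your phrasing, which also mislabels $\mu_{O^\infty}$ as the pushforward of Haar measure to the conjugacy class of $\Gamma_P$, when it is in fact the IRS supported on conjugates of the normal subgroup $\ker(\pi)$.

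Finally, the appeal to Theorem \ref{thm_edges}\ref{adjacent_edges} as a ``short-cycle estimate'' for $\calG_n$ is not what that statement says; it concerns pairs of short interior edges meeting a common tetrahedron, not cycles in the dual graph. The dual graph's local tree structure should come from the standard fact that the configuration model has $O(1)$ short cycles in expectation, as the paper says by invoking \cite{Bol2}.
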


\begin{proof}
  Let $\mu_n'$ be the invariant random subgroup associated to the random variable $Y_n$. We will first prove that the sequence $\mu_n'$ converges to $\mu_{O^\infty}$. Let $\Gamma_P$ be the group defined above and $\Lambda = \ker(\pi) \le \Gamma_P$. Every $M \in \Xi_n$ admits a (possibly non-continuous) picewise isometric map to $O$ (by mapping its marked octahedra to $O$). The non-continuity comes from the rotations made when gluing faces so the composition $M \to O \to P$ is continuous and hence a covering map. Let $\Gamma_n \le \Gamma_P$ be the invariant random index-$12n$ subgroup corresponding to $M_n$.

  The Schreier graph of $\Gamma_P/\Gamma_n$ with loops removed is obtained from the graph dual to the tesselation of $M_n$ by replacing each vertex with a fixed graph $\mathcal Q$. The dual graphs follow the same distribution as the configuration model, and as this model of graphs BS-converges to the tree (this follows from \cite{Bol2}) we get that the random variable $\Gamma_n$ converges in distribution to $\Lambda$ (since the Schreier graph of $\Gamma_P/\Lambda$ is obtained from the tree by replacing vertices with $\mathcal Q$). Now $\mu_n$ is the IRS obtained by induction of $\Gamma_n$ from $\Gamma_0$ to $\mathrm{PGL}_2(\mathbb C)$, and $\mu_{O^\infty}$ by induction of $\Lambda$ (see \cite[11.1]{7S2} for the definition of induction). As induction is continuous we get that $\mu_n'$ converges to $\mu_{O^\infty}$. 

  Now if $\mu_n''$ is the IRS associated to $Z$ it follows immediately from the convergence of $\mu_n'$ together with Lemma \ref{small_cusps} that we also have $\lim(\mu_n'') = \mu_{O^\infty}$. 

  We pass to the larger space of random pointed metric spaces with Benjamini--Schramm topology (see \cite[Section 5]{Gelander_lecture}). If $\mu$ is an IRS of $\mathrm{PGL}_2(\CC)$ we denote by $\mu^{\ge\delta}$ the random pointed compact manifold with boundary which comes from conditioning the point to be in the thick part (note that doing so we lose all invariance properties). It follows from the previous paragraph that $(\mu_n'')^{\ge\delta}$ converges to $\mu_{O^\infty}^{\ge\delta}$ and from Lemma \ref{large_cusps} that $\mu_n^{\ge\delta}$ also does.

  Now the map $(M,x) \mapsto (M_{\ge\delta}, x)$ is an homeomorphism onto its image: it is continuous (immediate) and injective (the boundary of the thin part determines the complex length of the core geodesic if it is a tube, and the isometry class of the cross-section if it is a torus), and the space of hyperbolic manifolds pointed in their thick part is compact. We can thus conclude that $\mu_n$ converges to $\mu_{O^\infty}$.
\end{proof}

%%%%%%%%%%%%%%%%%%%%%%%%%%%%%%%%%%%%%%%%%%%%%%%%%%%%%%%%%%%%

\bibliographystyle{plain}
\bibliography{bib}

\end{document}